\documentclass[12pt]{article}
\RequirePackage{etex}
\usepackage{amsmath}
\usepackage{graphicx}
\usepackage{enumerate}
\usepackage{natbib}
\usepackage{url}
\usepackage{bm}
\usepackage{bbm}
\usepackage{amsthm,amssymb,amsfonts}
\usepackage{subfigure}
\usepackage{float}
\usepackage{xcolor}
\usepackage{multirow}
\usepackage{ulem}
\usepackage{multirow}
\usepackage{comment}
\RequirePackage[colorlinks,citecolor=blue,urlcolor=blue,breaklinks=true]{hyperref}
\usepackage{mathtools}
\usepackage{booktabs}
\usepackage{cleveref, autonum}
\usepackage{rotating}
\usepackage[margin = 1in]{geometry}
\usepackage[title]{appendix}

\newcommand{\Var}{\mathrm{Var}}

\newcommand{\tr}{\mathrm{tr}}
\newcommand\data{\mathbb{D}_n}
\newcommand{\EE}{\mathbb{E}}
\newcommand{\PP}{\mathbb{P}}
\newcommand{\bbR}{\mathbb{R}}
\newcommand{\bbH}{\mathbb{H}}
\newcommand{\mX}{\mathcal{X}}
\newcommand{\s}{\sum_{i=1}^{\infty}}
\newcommand{\Ltwo}{L^2_{p_X}(\mX)}
\newcommand{\GP}{\mathrm{GP}}

\theoremstyle{plain}
\newtheorem{theorem}{Theorem}
\newtheorem{lemma}[theorem]{Lemma}

\newtheorem{corollary}[theorem]{Corollary}

\newtheorem{assumption}{Assumption}
\renewcommand{\theassumption}{A}

\newtheoremstyle{nonitalicremark}
  {} 
  {} 
  {} 
  {} 
  {\bfseries} 
  {.} 
  {.5em} 
  {} 

\theoremstyle{nonitalicremark}
\newtheorem*{remark}{Remark}

\graphicspath{{figures/}}

\begin{document}

\def\spacingset#1{\renewcommand{\baselinestretch}
{#1}\small\normalsize} \spacingset{1}

\title{\bf Optimal plug-in Gaussian processes for modeling derivatives}
\author{Zejian Liu and Meng Li\\
Department of Statistics, Rice University}
\date{}
\maketitle

\bigskip

\begin{abstract}
Derivatives are a key nonparametric functional in wide-ranging applications where the rate of change of an unknown function is of interest. In the Bayesian paradigm, Gaussian processes (GPs) are routinely used as a flexible prior for unknown functions, and are arguably one of the most popular tools in many areas. However, little is known about the optimal modeling strategy and theoretical properties when using GPs for derivatives. In this article, we study a plug-in strategy by differentiating the posterior distribution with GP priors for derivatives of any order. This practically appealing plug-in GP method has been previously perceived as suboptimal and degraded, but this is not necessarily the case. We provide posterior contraction rates for plug-in GPs and establish that they achieve optimal rates simultaneously for all derivative orders. We show that the posterior measure of the regression function and its derivatives, with the same choice of hyperparameter that does not depend on the order of derivatives, converges at the minimax optimal rate up to a logarithmic factor for functions in certain classes. We analyze a data-driven hyperparameter tuning method based on empirical Bayes, and show that it satisfies the optimal rate condition while maintaining computational efficiency. This article to the best of our knowledge provides the first positive result for plug-in GPs in the context of inferring derivative functionals, and leads to a practically simple nonparametric Bayesian method with optimal and adaptive hyperparameter tuning for simultaneously estimating the regression function and its derivatives. Simulations show competitive finite sample performance of the plug-in GP method. A climate change application for analyzing the global sea-level rise is discussed. 
\end{abstract}

\noindent
{\it Keywords:} Adaptive, Bayesian nonparametrics, derivative estimation, Gaussian process regression, plug-in property, posterior contraction
\vfill

\newpage

\section{Introduction}

\sloppy

Consider the nonparametric regression model 
\begin{equation}\label{eq:model}
Y_i=f(X_i)+\varepsilon_i,\quad \varepsilon_i\sim N(0,\sigma^2),
\end{equation}
where the data $\data=\{X_i,Y_i\}_{i=1}^{n}$ are independent and identically distributed samples from a distribution $\PP_0$ on $\mX\times \bbR$ that is determined by $\PP_X$, $f_0$, and $\sigma^2$, which are respectively the marginal distribution of $X_i$, the true regression function, and the noise variance that is possibly unknown. Let $p_X$ denote the density of $\PP_X$ with respect to the Lebesgue measure $\mu$. Here $\mX\subset \bbR^p$ is a compact metric space for $p\geq 1$.

We are interested in the inference on the derivative functions of $f$. Derivatives emerge as a key nonparametric quantity when the rate of change of an unknown surface is of interest. Examples include surface roughness for digital terrain models, temperature or rainfall slope in meteorology, and pollution curvature for environmental data. The importance of derivatives, either as a nonparametric functional or localized characteristic of $f$, can also be found in efficient modeling of functional data \citep{dai2018derivative}, shape constrained function estimation~\citep{riihimaki2010gaussian, wang2016estimating}, and the detection of stationary points~\citep{yu2020bayesian}. Optimal convergence rates for derivatives under Hilbert and Sobolev norms for kernel-based regularized least-squares estimators are provided in \citep{blanchard2018optimal,fischer2020sobolev}.

Gaussian processes (GPs) are a popular nonparametric Bayesian method in many areas such as spatially correlated data analysis \citep{stein2012interpolation,gelfand2003spatial,banerjee2003directional}, functional data analysis \citep{shi2011gaussian}, and machine learning \citep{rasmussen2006}; see also the excellent review article by \cite{gelfand2016spatial} which elaborates on the instrumental role GPs have played as a key ingredient in an extensive list of twenty years of modeling work. GPs not only provide a flexible process for unknown functions but also serve as a building block in hierarchical models for broader applications. 

For function derivatives, the so-called plug-in strategy that directly differentiates the posterior distribution of GP priors is practically appealing, as it would allow users to employ the same prior no matter whether the inference goal is on the regression function or its derivatives. However, this plug-in estimator has been perceived as suboptimal and degraded for a decade \citep{stein2012interpolation,holsclaw2013gaussian} based on heuristics, while a theoretical understanding is lacking, particularly in light of the nonparametric nature of derivative functionals. As a result, there has been limited study of plug-in GPs ever since, and substantially more complicated methods that hamper easy implementation and often restrict to one particular derivative order are pursued. 

In this article, we study the plug-in strategy with GPs for derivative functionals by characterizing large sample properties of the plug-in posterior measure with GP priors, and obtain the first positive result. We show that the plug-in posterior distribution, with the same choice of hyperparameter in the GP prior, concentrates at the derivative functionals of any order at a nearly minimax rate in specific examples, thus achieving a remarkable plug-in property for nonparametric functionals that gains increasing attention recently \citep{yoo2016supremum,liu2020non}. It is known that many commonly used nonparametric methods such as smoothing splines and local polynomials do not enjoy this property when estimating derivatives \citep{wahba1990optimal,charnigo2011generalized}, and the only nonparametric Bayesian method with established plug-in property, to the best of our knowledge, is random series priors with B-splines \citep{yoo2016supremum}. 

In recent years, the nonparametric Bayesian literature has seen remarkable adaptability of GP priors in various regression settings~\citep{van2009adaptive, bhattacharya2014anisotropic, jiang2021variable}. Our findings contribute to this growing literature and indicate that the widely used GP priors offer more than inferring regression functions. In particular, the established theory reassures the use of plug-in GPs for optimal modeling of derivatives, and further sheds light on hyperparameter tuning in the presence of varying derivative orders, for which we propose to use an empirical Bayes approach. Our analysis indicates that this data-driven hyperparameter tuning strategy attains theoretical optimality for all derivative orders and adapts to the true function's smoothness level with an oversmooth kernel, while maintaining computational efficiency. Therefore, this article shows that the Bayes procedure using GP priors automatically achieves optimal contraction rates for derivatives of all orders, leading to a practically simple nonparametric Bayesian method with guided hyperparameter tuning for simultaneously estimating the regression function and its derivatives. These theoretical guarantees are complemented by competitive finite sample performance using simulations, as well as a climate change application to analyzing the global sea-level rise. 

The following notation is used throughout this paper. We write $X=(X_1^T,\ldots,X_n^T)^T\in\bbR^{n\times p}$ and $Y=(Y_1,\ldots,Y_n)^T\in \bbR^n$. Let $\|\cdot\|$ be the Euclidean norm; for $f, g: \mX \rightarrow \bbR$, let $\|f\|_\infty$ be the $L_\infty$ (supremum) norm, $\|f\|_2=(\int_\mX f^2d\PP_X)^{1/2}$ the $L_2$ norm with respect to the covariate distribution $\PP_X$, and $\left<f,g\right>_2=(\int_\mX fgd\PP_X)^{1/2}$ the inner product. The corresponding $L_2$ space relative to $\PP_X$ is denoted by $\Ltwo$; we write $L^2(\mX)$ for the $L_2$ space with respect to the Lebesgue measure $\mu$. Let  $\mathbb{N}$ be the set of positive integers and write $\mathbb{N}_0=\mathbb{N}\cup\{0\}$. We let $C(\mX)$ and $C(\mX,\mX)$ denote the space of continuous functions and continuous bivariate functions. In one-dimensional case, for $\Omega\subset\bbR$, a function $f:\Omega\rightarrow\bbR$ and $k\in\mathbb{N}$, we use $f^{(k)}$ to denote its $k$-th derivative when it exists, with $f^{(0)}=f$. Let $C^m(\Omega)=\{f:\Omega\rightarrow\bbR \mid f^{(k)}\in C(\Omega) \ {\rm for\ all\ } 1\leq k\leq m\}$ denote the space of $m$-times continuously differentiable functions, and $C^{2m}(\Omega,\Omega)=\{K:\Omega\times \Omega\rightarrow\bbR \mid \partial^k_x\partial^k_{x'}K(x,x')\in C(\Omega,\Omega) \ {\rm for\ all\ } 1\leq k\leq m\}$ the space of $m$-times continuously differentiable bivariate functions, where $\partial^k_x=\partial^k/\partial x^k$. For two sequences $a_n$ and $b_n$, we write $a_n \lesssim b_n$ if $a_n \leq C b_n$ for a universal constant $C>0$, and $a_n \asymp b_n$ if $a_n \lesssim b_n$ and $b_n \lesssim a_n$.

\section{Main results}\label{sec:main.results}

\subsection{Plug-in Gaussian process for derivative functionals}

We assign a Gaussian process prior $\Pi$ on the regression function such that $f \sim \GP(0, \sigma^2 (n \lambda)^{-1} K)$. Here $K(\cdot,\cdot):\mX\times\mX\rightarrow \bbR$ is a continuous, symmetric and positive definite kernel function, and $\lambda>0$ is a regularization parameter that possibly depends on the sample size $n$. The rescaling factor $\sigma^2(n\lambda)^{-1}$ in the covariance kernel connects the posterior mean Bayes estimator with kernel ridge regression \citep{wahba1990spline,cucker2007learning}; see also Theorem 11.61 in \cite{ghosal2017fundamentals}. This rescaling strategy has been employed in Bayesian inverse problems~\citep{nickl2023bayesian}, while alternative approaches fix $\lambda$~\citep{van2008rates} and instead rescale sample paths via a bandwidth parameter~\citep{van2009adaptive} to achieve adaptivity.

It is not difficult to derive that the posterior distribution $\Pi_n(\cdot\mid\data)$ is also a GP: $f \mid \data \sim \GP(\hat{f}_n, \hat{V}_n)$ \cite[Chapter~2.2]{rasmussen2006}, where the posterior mean $\hat{f}_n$ and posterior covariance $\hat{V}_n$ are given by
\begin{align}
\hat{f}_n(x) &= K(x, X) [K(X, X) + n \lambda I_n]^{-1} Y,\\
\label{eq:variance}\hat{V}_n(x, x') &=  \sigma^2 (n \lambda)^{-1} \{K(x, x') - K(x, X)[K(X, X) + n \lambda I_n]^{-1} K(X, x')\},
\end{align}
for any $x, x' \in \mX$. Here $K(X, X)$ is the $n$ by $n$ matrix $(K(X_i, X_j))_{i, j = 1}^n$, $K(x, X)$ is the 1 by $n$ vector $(K(x, X_i))_{i = 1}^n$, and $I_n$ is the $n$  by $n$ identity matrix. 

We now define the plug-in Gaussian process for differential operators. For simplicity we focus on the one-dimensional case where $\mX=[0,1]$ throughout the paper, but remark that the studied plug-in strategy can be extended to multivariate cases straightforwardly despite more complicated notation for high-order derivatives. 

For any $k\in\mathbb{N}$, define the $k$-th differential operator $D^k:C^k[0,1]\rightarrow C[0,1]$ by $D^k(f)=f^{(k)}$. If $K\in C^{2k}([0,1],[0,1])$, then the posterior distribution of the derivative $f^{(k)}\mid \data$, denoted by $\Pi_{n,k}(\cdot\mid\data)$, is also a GP since differentiation is a linear operator \cite[Chapters~4.1.1 and 9.4]{rasmussen2006}. In particular, $f^{(k)} \mid \data \sim {\rm GP}(\hat{f}^{(k)}_n, \tilde{V}^k_n)$, where
\begin{align}
\label{eq:f.hat.deriv}\hat{f}^{(k)}_n(x) &= K_{k0}(x, X) [K(X, X) + n \lambda I_n]^{-1} Y,\\
\label{eq:deriv.variance}\tilde{V}^k_n(x, x')&= \sigma^2 (n \lambda)^{-1} \left\{K_{kk}(x, x') -  K_{k0}(x, X)[K(X, X) + n \lambda I_n]^{-1} K_{0k}(X, x')\right\},
\end{align}
with $K_{k0}(x, X) = (\partial^k_x K(x, X_i))_{i = 1}^n$ and $K_{kk}(x, x')=\partial^{k}_x\partial^k_{x'}K(x,x')$. Then the nonparametric plug-in procedure for $D^k$ refers to using the plug-in posterior measure $\Pi_{n,k}(\cdot\mid\data)$ for inference on $D^k(f)$. 

The plug-in posterior measure $\Pi_{n,k}(\cdot\mid\data)$ has a closed-form expression with given $\lambda$ and $\sigma^2$, substantially facilitating its implementation in practice. The plug-in strategy is practically appealing but has been perceived as suboptimal for a decade \citep{stein2012interpolation,holsclaw2013gaussian} based on heuristics. To the contrary, we will establish the optimality of plug-in GPs and uncover their adaptivity to derivative orders. In particular, our large sample analysis of $\Pi_{n,k}(\cdot\mid\data)$ shows that GPs exhibit a ``nonparametric plug-in property'' for derivative functionals, a concept we introduce and contextualize in the next section.

\subsection{Nonparametric plug-in property for derivative functionals} 
\label{sec:plug.in.challenge}

The ``plug-in property'' in the literature~\citep{bickel2003nonparametric} refers to the phenomenon that a rate-optimal nonparametric estimator also efficiently estimates some bounded linear functionals. A parallel concept has been studied in the Bayesian paradigm relying on posterior distributions and posterior contraction rates~\citep{rivoirard2012bernstein, castillo2013nonparametric, castillo2015bernstein}. 

The differential operator $D^k$ studied in this article, however, points to function-valued functionals, or \textit{nonparametric functionals}, as opposed to real-valued functionals studied in the classical plug-in property literature. Hence, one needs to analyze the function-valued functionals uniformly for all points in the support. To distinguish the plug-in property for nonparametric functionals from its traditional counterpart for real-valued functionals, we term this property as \textit{nonparametric plug-in property}. 

The nonparametric plug-in property for derivative estimation of a regression function is known to fail for many existing estimators. For instance, as highlighted by \cite{wahba1990optimal} and \cite{charnigo2011generalized}, the optimal selection of the smoothing parameter in widely used methods such as local polynomial regression and smoothing splines is contingent on the order of the derivative. Specifically, when the estimator attains the optimal rate of convergence for the regression function, derivative estimators with the same value of tuning parameter are sub-optimal, and vice versa. This lack of adaptability to varying derivative orders complicates the process of parameter tuning in these methods.

We will establish that GPs enjoy the nonparametric plug-in property for differential operators. The theoretical development relies on the explicit Gaussian characterization of the posterior distribution over derivatives. Posterior contraction rates are derived through a detailed analysis of the posterior variance and the convergence properties of the posterior mean estimator. This general strategy bears resemblance to prior work in Bayesian nonparametrics, such as \cite{knapik2011bayesian}, although the focus and inferential objectives differ from those considered here. In the case of derivative functionals, our analysis draws on an operator-theoretic framework \citep{smale2005shannon,smale2007learning}, the equivalent kernel technique, and recent advances in non-asymptotic analysis of nonparametric estimators \citep{liu2020non}. Our theory is developed under a random design framework, which includes quasi-uniform (or quasi-fixed) designs where inputs are randomly sampled from a finite set, such as equally spaced grid points. A full extension to other designs, such as fixed design, is an interesting direction for future work.

\subsection{Posterior contraction for function derivatives}

Throughout this article, we assume the true regression function $f_0 \in C^k[0,1]$ and the covariance kernel $K\in C^{2k}([0,1],[0,1])$. Let $\{\mu_i\}_{i=1}^\infty$ and $\{\phi_i\}_{i=1}^\infty$ be the eigenvalues and eigenfunctions of the kernel $K$ such that $K(x,x')=\sum_{i=1}^{\infty}\mu_i\phi_i(x)\phi_i(x')$ for any $x,x'\in [0,1]$, where the eigenvalues satisfy $\mu_1\geq\mu_2\geq \cdots> 0$ and $\mu_i \downarrow 0$, and eigenfunctions form an orthonormal basis  of $L_{p_X}^2[0,1]$. The existence of such eigendecomposition is ensured by Mercer's theorem \cite[Theorem~4.2]{rasmussen2006}. It can also be seen that $\phi_i\in C^{k}[0,1]$ for all $i\in\mathbb{N}$ as $K\in C^{2k}([0,1],[0,1])$.

We make the following assumptions on the eigenfunctions of the covariance kernel.

\renewcommand{\theassumption}{(A)}

\begin{assumption}
There exists $C_{k,\phi}>0$ such that $\|\phi^{(k)}_i\|_\infty \leq C_{k,\phi} i^k$ for any $i\in\mathbb{N}$.
\end{assumption}

\renewcommand{\theassumption}{(B)}

\begin{assumption}
There exists $L_{k,\phi}>0$ such that $|\phi^{(k)}_i(x)-\phi^{(k)}_i(x')| \leq L_{k,\phi} i^{k+1} |x-x'|$ for all $i\in\mathbb{N}$ and any $x,x'\in[0,1]$.
\end{assumption}

We will make extensive use of the so-called equivalent kernel $\tilde{K}$ \cite[Chapter~7]{rasmussen2006}, which shares the same eigenfunctions with $K$ with altered eigenvalues $\nu_i={\mu_i}/{(\lambda+\mu_i)}$ for $i\in\mathbb{N}$, i.e., $\tilde{K}(x,x')=\s \nu_i\phi_i(x)\phi_i(x')$. Note that $\tilde{K}$ is also a continuous, symmetric, and positive definite kernel.

Under Assumption (A), we define an $m$-th order analog of \textit{effective dimension} of the kernel $K$ with respect to $p_X$ for $0\leq m\leq k$:
\begin{equation}\label{eq:high.order.kappa}
\tilde{\kappa}_{m}^2=\sup_{x,x'\in[0,1]}\partial_{x}^m\partial_{x'}^m\tilde{K}(x,x')=\sup_{x\in[0,1]}\s\frac{\mu_i}{\lambda+\mu_i}\phi^{(m)}_i(x)^2.
\end{equation}
When $m = 0$, the definition above recovers the effective dimension of $K$ in the literature~\citep{zhang2005learning}, namely,  $\tilde{\kappa}^2=\tilde{\kappa}_{0}^2 =\sup_{x\in [0,1]}\tilde{K}(x,x) \lesssim\sum_{i=1}^{\infty}\frac{\mu_i}{\lambda+\mu_i}$.
We also define $\hat{\kappa}_{m}^2$ as a useful upper bound of $\tilde{\kappa}_{m}^2$ up to a constant multiplier under Assumption (A) for $0 \leq m\leq k$:
\begin{equation}\label{eq:kappa.k+1}
\hat{\kappa}_{m}^2=\s\frac{i^{2m}\mu_i}{\lambda+\mu_i}\gtrsim \tilde\kappa_{m}^2.
\end{equation}
Note that $\tilde\kappa_{m}^2$ and $\hat\kappa_{m}^2$ depend on the regularization parameter $\lambda$.

Let $\epsilon_n$ be a sequence such that $\epsilon_n\rightarrow0$, and $n\epsilon_n^2\rightarrow\infty$. For $\epsilon_n$ to become a posterior contraction rate for $\Pi_{n,k}(\cdot\mid\data)$ at $f_0^{(k)}$, we additionally require the following assumptions.

\renewcommand{\theassumption}{(C)}

\begin{assumption}
The regularization parameter $\lambda$ is chosen such that $\tilde{\kappa}^2=o(\sqrt{n/\log n})$, $\tilde{\kappa}_{k}^2=O(n\epsilon_n^2/\log n)$, and $\hat{\kappa}_{k+1}^2=O(n)$.
\end{assumption}

\renewcommand{\theassumption}{(D)}

\begin{assumption}
$\epsilon_n$ is a non-asymptotic convergence rate of $\hat{f}_n^{(k)}$ under the $L_2$ or $L_\infty$ norm, i.e., $\|\hat f_n^{(k)}-f_0^{(k)}\|_p\lesssim \epsilon_n$ for $p = 2$ or $\infty$ with $\PP_0^{(n)}$-probability tending to 1.
\end{assumption}

Assumptions (A), (B), and (C) are related to the eigendecomposition of the covariance kernel, while Assumption (D) is on the convergence rate of the derivatives of the posterior mean, which are well studied in nonparametric statistics. Assumption (C) can be verified through direct calculations based on the decay rate of eigenvalues; we provide the rates of $\tilde{\kappa}^2$, $\tilde{\kappa}_{k}^2$ and $\hat{\kappa}_{k+1}^2$ for specific kernels later in the paper in Lemma~\ref{lem:differentiability.exp} and Lemma~\ref{lem:differentiability.matern}. In view of the connection between the posterior mean and kernel ridge regression (KRR) estimator, we can take advantage of the rich literature on KRR and kernel learning theory to verify Assumption (D). Along this line, Section~\ref{sec:Matern} will give the convergence rates for special examples in Lemma~\ref{thm:minimax.diff.exp} and Lemma~\ref{thm:minimax.diff.matern}. 

The following theorem obtains the posterior contraction rates using plug-in GPs for function derivatives.

\begin{theorem}\label{thm:deriv.contraction.l2}
Suppose that $f_0 \in C^k[0,1]$, $K\in C^{2k}([0,1],[0,1])$ and $\epsilon_n$ is a sequence such that $\epsilon_n\rightarrow0$, and $n\epsilon_n^2\rightarrow\infty$. Under Assumptions (A), (B), (C) and (D), $\epsilon_n$ is a posterior contraction rate of $\Pi_{n,k}(\cdot \mid \data)$ at $f_0^{(k)}$, i.e., for any $M_n\rightarrow\infty$, $\Pi_{n,k}\left(\|f^{(k)}-f_0^{(k)}\|_p > M_n\epsilon_n\,\big|\,\data\right)\rightarrow 0$ in $\PP_0^{(n)}$-probability.
\end{theorem}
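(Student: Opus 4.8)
The plan is to decompose the plug-in posterior into its mean and a centered Gaussian part and control each separately. Since $f^{(k)} \mid \data \sim \GP(\hat{f}_n^{(k)}, \tilde{V}_n^k)$, under the $L_p$ norm we write, for $f^{(k)}$ drawn from $\Pi_{n,k}(\cdot\mid\data)$,
\begin{equation*}
\|f^{(k)} - f_0^{(k)}\|_p \le \|\hat{f}_n^{(k)} - f_0^{(k)}\|_p + \|f^{(k)} - \hat{f}_n^{(k)}\|_p.
\end{equation*}
Assumption (D) handles the first (deterministic, data-dependent) term, giving $\|\hat{f}_n^{(k)} - f_0^{(k)}\|_p \lesssim \epsilon_n$ with $\PP_0^{(n)}$-probability tending to one. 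So the crux is to show that the centered Gaussian fluctuation $W := f^{(k)} - \hat{f}_n^{(k)}$, which is a mean-zero GP with covariance kernel $\tilde{V}_n^k(x,x')$, has $\|W\|_p = O_{\PP}(\epsilon_n)$ under the posterior, i.e.\ $\Pi_{n,k}(\|W\|_p > M_n \epsilon_n \mid \data)\to 0$ for every $M_n\to\infty$.

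For the $L_2$ case, I would bound $\EE[\|W\|_2^2 \mid \data] = \int_0^1 \tilde{V}_n^k(x,x)\,d\PP_X(x) \le \sup_x \tilde{V}_n^k(x,x)$ and show this supremum is $O(\epsilon_n^2)$, then invoke Markov's inequality. The key algebraic step is to express $\sup_x \tilde{V}_n^k(x,x)$ in terms of the eigendecomposition: using \eqref{eq:deriv.variance} and the equivalent kernel, one expects $\sup_x \tilde{V}_n^k(x,x) \lesssim \sigma^2 (n\lambda)^{-1}\sup_x \sum_i \tfrac{\lambda \mu_i}{\lambda+\mu_i}\phi_i^{(k)}(x)^2 \le \sigma^2 n^{-1}\tilde{\kappa}_k^2$, which is $O(\epsilon_n^2)$ by the middle condition in Assumption (C); here one must carefully relate the finite-sample quantity $K_{k0}(x,X)[K(X,X)+n\lambda I_n]^{-1}K_{0k}(X,x)$ to its population analog built from $\tilde K$, which is exactly where the equivalent-kernel technique and the effective-dimension control $\tilde\kappa^2 = o(\sqrt{n/\log n})$ enter (to ensure the empirical operator $\tfrac1n\sum \phi(X_i)\phi(X_i)^T$ concentrates around identity on the relevant subspace). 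For the $L_\infty$ case, Markov on the second moment is not enough; instead I would use a Gaussian process maximal inequality — bound $\EE[\|W\|_\infty \mid \data]$ via Dudley's entropy integral (or the Borell–TIS inequality for the deviation above the mean), which requires a modulus-of-continuity / Lipschitz bound on the sample paths of $W$. This is precisely what Assumption (B) supplies: it yields a Lipschitz bound on $\phi_i^{(k)}$ with the $i^{k+1}$ growth, and the condition $\hat\kappa_{k+1}^2 = O(n)$ in Assumption (C) makes the resulting metric-entropy bound summable, so that $\EE[\|W\|_\infty\mid\data] \lesssim \tilde\kappa_k/\sqrt{n} \cdot (\log\text{-factor})$, again $O(\epsilon_n)$ up to the log that is absorbed by $M_n\to\infty$; Borell–TIS then controls the deviation of $\|W\|_\infty$ around its mean.

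I expect the main obstacle to be the $L_\infty$ fluctuation bound: turning the pointwise posterior variance control into a uniform-over-$x$ statement for the Gaussian process $W$, which demands (i) a sharp entropy estimate for the pseudo-metric $d(x,x') = \tilde{V}_n^k(x,x) + \tilde{V}_n^k(x',x') - 2\tilde{V}_n^k(x,x')$ induced by $W$, and (ii) showing this pseudo-metric is dominated by a multiple of the Euclidean metric with the right constants — that reduction is where Assumptions (A) and (B) on the eigenfunctions and their derivatives do the real work, and where one must also keep the finite-sample matrix $[K(X,X)+n\lambda I_n]^{-1}$ under control uniformly. A secondary technical point is that $\sigma^2$ may be unknown, so I would either assume a consistent estimator $\hat\sigma^2$ is plugged in or note that the bounds are uniform over $\sigma^2$ in a compact set; and finally, the passage from "expected $L_p$ norm small" to "posterior probability of the event small" is routine via Markov once the expectation bounds are in place, uniformly in the data event of probability tending to one on which Assumption (D) and the concentration of the empirical kernel operator hold.
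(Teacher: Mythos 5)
Your proposal is correct and, for the essential part of the argument, follows the same route as the paper: decompose into the posterior-mean error (handled by Assumption (D)) plus the centered Gaussian fluctuation $W = f^{(k)}-\hat f_n^{(k)}$, bound $\|\tilde V_n^k\|_\infty \lesssim \sigma^2\tilde\kappa_k^2/n$ via the equivalent-kernel/noise-free-KRR device, control $\EE[\|W\|_\infty\mid\data]$ by Dudley's entropy integral using exactly the ingredients you name (Assumption (B) to dominate the intrinsic pseudo-metric by a power of the Euclidean metric, and $\hat\kappa_{k+1}^2=O(n)$ to keep the entropy integral of order $\tilde\kappa_k\sqrt{\log n/n}$, which Assumption (C) makes $O(\epsilon_n)$), and finish with Borell--TIS for the deviation above the mean. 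The one place you genuinely diverge is the $L_2$ case: you propose Markov's inequality on $\EE[\|W\|_2^2\mid\data]=\int_0^1\tilde V_n^k(x,x)\,d\PP_X(x)\le\|\tilde V_n^k\|_\infty$, which is valid and more elementary, whereas the paper simply bounds $\|W\|_2\le\|W\|_\infty$ and reuses the sup-norm concentration verbatim; your route avoids Dudley and Borell--TIS entirely for $p=2$ (at the price of a polynomial rather than exponential posterior tail, which is immaterial here since only $M_n\to\infty$ is needed), while the paper's choice keeps the two cases structurally identical. The steps you flag as remaining work --- passing from the finite-sample quantity $K_{k0}(x,X)[K(X,X)+n\lambda I_n]^{-1}K_{0k}(X,x)$ to its population analogue --- are precisely what the paper's Lemma~\ref{thm:equivalent.sigma.deriv} and Lemma~\ref{cor:h.tilde.noiseless} supply, so nothing in your outline would fail.
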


Theorem~\ref{thm:deriv.contraction.l2} provides an approach for establishing posterior contraction rates. Computing the contraction rate now boils down to analyzing the eigendecomposition of the  covariance kernel (Assumptions (A), (B), and (C)) and the convergence rate of the derivatives of the posterior mean (Assumption (D)). In Section~\ref{sec:Matern}, we will apply Theorem~\ref{thm:deriv.contraction.l2} to derive the minimax optimal contraction rate for estimating the derivatives of analytic-type functions (Theorem~\ref{thm:exp.contraction.deriv}) and H\"older smooth functions (Theorem~\ref{thm:contraction.deriv}). Although our focus is on function derivatives, setting $k = 0$ in Theorem~\ref{thm:deriv.contraction.l2} also provides contraction rates for estimating the regression function.

\subsection{Minimax optimality using special examples}\label{sec:Matern}

In this section, we will provide examples in which optimal contraction rates in the minimax sense can be achieved by using Theorem~\ref{thm:deriv.contraction.l2}. A technical requirement is to verify Assumption D, which pertains to
the convergence rate of derivatives of the posterior mean. To this end, we first introduce a useful result for any kernel with bounded eigenfunctions to compute the convergence rate required by Assumption (D). 

We now briefly review the preliminaries for reproducing kernel Hilbert space (RKHS), commonly used in machine learning and statistical learning theory. The material follows standard references such as \cite{rasmussen2006,berlinet2011reproducing}, and is included here for completeness and to introduce notation.

Consider $\mathbb{H}$ as the RKHS associated with the kernel $K$. For representations $f = \sum f_i\phi_i$ and $g = \sum g_i\phi_i$, the inner product in $\mathbb{H}$ is given by $\langle f,g \rangle_{\mathbb{H}} = \sum f_ig_i/\mu_i$. Additionally, we introduce $\tilde{\mathbb{H}}$ to represent the RKHS associated with the corresponding kernel $\tilde{K}$. Note that while $\tilde{\mathbb{H}}$ and $\mathbb{H}$ comprise the same functions, their inner products differ. In $\tilde{\mathbb{H}}$, the inner product is defined as $\langle f,g \rangle_{\tilde{\mathbb{H}}} = \langle f,g \rangle_2 + \lambda\langle f,g \rangle_\mathbb{H}$. 

Further, we introduce a compact, positive definite, and self-adjoint integral operator $L_K$ from $\Ltwo$ to $\mathbb{H}$ described by:
\begin{equation} \label{eq:int.operator}
L_K(f)(x) = \int_{\mX} K(x, x') f(x') d\PP_X(x'), \quad x \in \mX. 
\end{equation}
Let $I$ be the identity operator. We approximate $f_0$ by a function in $\bbH$:
\begin{equation}\label{eq:flambda}
f_{\lambda} = (L_K + \lambda I)^{-1}L_K f_{0},
\end{equation}
which minimizes $ \|f - f_{0}\|_{2}^2 + \lambda \|f\|^2_{\bbH}$ subject to $f \in \bbH$. The equivalent kernel $\tilde{K}$ provides an alternative interpretation of the proximate function $f_\lambda$. Letting the integral operator $L_{\tilde{K}}$ be the counterpart of $L_K$ induced by $\tilde{K}$, we have $f_\lambda = L_{\tilde{K}}f_0$. 

The following Lemma~\ref{lem:approx.error} facilitates the verification of  Assumption (D) by relating the convergence rate of the posterior mean to the approximate function $f_\lambda$. In particular, the convergence rate can be expressed as the sum of two terms: a deterministic term independent of the data or error, and a term determined by the equivalent RKHS induced by the covariance kernel.

\begin{lemma}\label{lem:approx.error}
Under Assumption (A), suppose that $\|f^{(k)}\|_2\leq D(k,\lambda) \|f\|_{\tilde{\bbH}}$ for any $f\in\bbH$ and some $D(k,\lambda) > 0$. By choosing $\lambda$ such that $\tilde{\kappa}^2=o(\sqrt{n/\log n})$ and $\|f_\lambda - f_0\|_\infty = o(1)$, we have with $\PP_0^{(n)}$-probability at least $1-n^{-10}$ that
\begin{equation}
\|\hat{f}_n^{(k)}-f_0^{(k)}\|_2 \lesssim\|f_\lambda^{(k)}-f_0^{(k)}\|_2 + D(k,\lambda) \tilde\kappa \sqrt{\frac{\log n}{n}}.
\end{equation}
\end{lemma}

Next, we present two concrete examples as a direct application of Theorem~\ref{thm:deriv.contraction.l2}, where we use kernels with exponentially decaying and polynomially decaying eigenvalues, respectively, for estimating analytic-type functions and Sobolev and H\"older functions. We consider a uniform sampling process for $p_X$ on $[0,1]$, and the corresponding probability measure $\PP_X$ becomes the Lebesgue measure. For concreteness, the eigenfunctions are the Fourier basis functions
\begin{equation}\label{eq:fourier}
\psi_1(x)=1,\ \psi_{2i}(x)=\sqrt{2}\cos(2\pi ix),\ \psi_{2i+1}=\sqrt{2}\sin(2\pi ix),\quad i\in\mathbb{N}, 
\end{equation}
where we use $\phi_i$ to denote general eigenfunctions and $\psi_i$ to refer to the Fourier basis functions considered in our special examples. Such eigenfunctions easily satisfy Assumption (A) with $C_{k,\psi}=\sqrt{2}(2\pi)^k$ and Assumption (B) with $L_{k,\psi} = \sqrt{2}(2\pi)^{k+1}$, and enable some explicit calculations when verifying Assumption D by Lemma~\ref{lem:approx.error}. The Fourier basis system is commonly used in the literature (e.g., \cite{van2011information} and \cite{yang2017frequentist}); they are also the eigenfunctions of the squared exponential kernel restricted to the circle~\citep{Li+Ghosal:17}. Note that the general results established in the preceding sections do not impose these constraints on the marginal distribution of $X$ or eigenfunctions.

We first consider the analytic-type function class $A^\gamma[0,1]$ for the true regression function $f_0$:
\begin{equation}
A^\gamma[0,1]=\left\{f: \|f\|^2_{A^\gamma[0,1]}= \s e^{2\gamma i}f_i^2<\infty,f_i=\left<f,\psi_i\right>_2\right\},
\end{equation}
where the eigenfunctions are the Fourier basis functions. It can be seen that $A^\gamma[0,1] \subset C^\infty[0,1]$ and the smoothness level of functions in $A^\gamma[0,1]$ increases as $\gamma$ increases. This function class has also been considered in \cite{van2011information}.

We use kernels with exponentially decaying eigenvalues, i.e., the covariance kernel $K$ has an eigendecomposition relative to $\PP_X$ with eigenvalues $\mu_i\asymp e^{-2\gamma i}$ for $i\in\mathbb{N}$ and some $\gamma>0$. We denote such kernels by $K_\gamma$.  The well-known squared exponential kernel exhibits similarly exponential eigenvalue decay, with eigenfunctions related to Hermite polynomials under a Gaussian design for $p_X$ \cite[Chapter~7]{pati2015adaptive,rasmussen2006}.

By verifying Assumptions (C) and (D) using our non-asymptotic analysis in Section~\ref{sec:non-asymptotic}, Theorem~\ref{thm:deriv.contraction.l2} yields the nonparametric plug-in property for estimating the derivatives of analytic-type functions: the posterior distribution contracts at a nearly parametric rate under the $L_2$ norm.

\begin{theorem}\label{thm:exp.contraction.deriv}
Suppose $f_0\in A^\gamma[0,1]$ for $\gamma>1/2$. If $K_\gamma$ is used in the GP prior with the regularization parameter $\lambda\asymp \log n/n$, then for any $k\in\mathbb{N}_0$, the posterior distribution $\Pi_{n,k}(\cdot\mid\data)$ contracts at $f_0^{(k)}$ at a nearly parametric rate $\epsilon_n = (\log n)^{k+1}/\sqrt{n}$ under the $L_2$ norm.
\end{theorem}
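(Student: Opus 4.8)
The plan is to verify the four Assumptions (A)--(D) of Theorem~\ref{thm:deriv.contraction.l2} for the kernel $K_\gamma$ with $\lambda \asymp \log n / n$ and the target rate $\epsilon_n = (\log n)^{k+1}/\sqrt{n}$, and then invoke that theorem directly. Assumptions (A) and (B) are immediate: the eigenfunctions are the Fourier basis functions in \eqref{eq:fourier}, and as noted in the excerpt they satisfy (A) with $C_{k,\psi}=\sqrt{2}(2\pi)^k$ and (B) with $L_{k,\psi}=\sqrt{2}(2\pi)^{k+1}$, since $\psi_i^{(k)}$ is again a sinusoid with frequency $\asymp i$. So the real work is Assumptions (C) and (D).

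For Assumption (C) the key step is to estimate the three quantities $\tilde\kappa^2$, $\tilde\kappa_k^2$ and $\hat\kappa_{k+1}^2$ for eigenvalues $\mu_i \asymp e^{-2\gamma i}$ and $\lambda \asymp \log n/n$. I would use the upper bound $\tilde\kappa_m^2 \lesssim \hat\kappa_m^2 = \sum_i i^{2m}\mu_i/(\lambda+\mu_i)$ from \eqref{eq:kappa.k+1} and split the sum at the index $i^\ast$ where $\mu_i \asymp \lambda$, i.e. $i^\ast \asymp \log(1/\lambda) \asymp \log n$. For $i \le i^\ast$ the summand is $\asymp i^{2m}$, contributing $\asymp (i^\ast)^{2m+1} \asymp (\log n)^{2m+1}$; for $i > i^\ast$ the summand is $\asymp i^{2m} e^{-2\gamma i}/\lambda$, which is a geometrically decaying tail dominated by its first term $\asymp (i^\ast)^{2m} e^{-2\gamma i^\ast}/\lambda \asymp (\log n)^{2m}$. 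Hence $\hat\kappa_m^2 \asymp (\log n)^{2m+1}$. This gives $\tilde\kappa^2 = \tilde\kappa_0^2 \lesssim (\log n) = o(\sqrt{n/\log n})$, $\tilde\kappa_k^2 \lesssim (\log n)^{2k+1} = O(n\epsilon_n^2/\log n) = O(n \cdot (\log n)^{2k+2}/n / \log n) = O((\log n)^{2k+1})$, and $\hat\kappa_{k+1}^2 \lesssim (\log n)^{2k+3} = O(n)$; all three hold. (I should double-check these rate statements are quoted as lemmas later in the paper, e.g.\ Lemma~\ref{lem:differentiability.exp}, and cite them rather than rederive.)

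For Assumption (D) I need $\|\hat f_n^{(k)} - f_0^{(k)}\|_2 \lesssim \epsilon_n = (\log n)^{k+1}/\sqrt n$ with probability tending to one. This is precisely the kind of KRR convergence-rate statement the excerpt promises in Lemma~\ref{thm:minimax.diff.exp}, so the cleanest route is to cite that lemma; if a self-contained argument is wanted, I would use the bias--variance decomposition of the KRR estimator in the eigenbasis, controlling the bias by the analytic decay of the Fourier coefficients of $f_0 \in A^{\gamma_0}[0,1]$ (the $e^{\gamma_0 i}$ weights beat the $i^{2k}$ amplification from differentiating up to order $k$ because $\gamma_0 > \gamma$), and the variance by the effective-dimension bounds $\hat\kappa_k^2 \asymp (\log n)^{2k+1}$ together with $\lambda \asymp \log n/n$, yielding a variance term of order $\hat\kappa_k^2/(n\lambda) \asymp (\log n)^{2k+1}/(\log n) \cdot n^{-1} \cdot \text{(noise)} \asymp (\log n)^{2k}/n$, whose square root contributes to $\epsilon_n$. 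With (A)--(D) in hand, Theorem~\ref{thm:deriv.contraction.l2} immediately gives that $\epsilon_n = (\log n)^{k+1}/\sqrt n$ is a posterior contraction rate of $\Pi_{n,k}(\cdot\mid\data)$ at $f_0^{(k)}$ under the $L_2$ norm, for every $k \in \mathbb{N}_0$, completing the proof.

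The main obstacle I anticipate is Assumption (D): establishing the sharp $(\log n)^{k+1}/\sqrt n$ rate for the derivative of the KRR estimator, uniformly over the analytic class, requires carefully tracking how $k$-fold differentiation amplifies each eigen-component by $i^{2k}$ and verifying that the exponential eigenvalue decay plus the condition $\gamma_0 > \gamma$ still controls the bias; the interplay with the choice $\lambda \asymp \log n/n$ (rather than a polynomial rate) is what produces the extra logarithmic factors, and getting the exponent of $\log n$ exactly right — as opposed to off by one — is the delicate point. The effective-dimension calculations in Assumption (C), by contrast, are routine geometric-series estimates once the split at $i^\ast \asymp \log n$ is set up.
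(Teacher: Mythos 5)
Your proposal is correct and follows essentially the same route as the paper: verify Assumptions (A)--(D) for $K_\gamma$ with $\lambda\asymp\log n/n$, using the Fourier-basis constants for (A)--(B), the effective-dimension rates $\tilde{\kappa}_{\gamma,m}^2\asymp(-\log\lambda)^{2m+1}$ from Lemma~\ref{lem:differentiability.exp} for (C), and Lemma~\ref{thm:minimax.diff.exp} for (D), then invoke Theorem~\ref{thm:deriv.contraction.l2}. Your split-sum derivation of $\hat\kappa_m^2\asymp(\log n)^{2m+1}$ at $i^\ast\asymp\log n$ is a sound elementary substitute for the paper's Fermi--Dirac integral computation, and your decision to cite the two lemmas rather than rederive them matches the paper exactly.
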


We next consider a Sobolev space 
\begin{equation}
W^\alpha[0,1] = \left\{f:  \|f\|^2_{W^\alpha[0,1]} = \s i^{2\alpha}f_i^2 < \infty,f_i=\left<f,\psi_i\right>_2\right\}, 
\end{equation}
and a H\"older space
\begin{equation}
H^{\alpha}[0,1]=\left\{f: \|f\|^2_{H^{\alpha}[0,1]}=\sum_{i=1}^{\infty}i^{\alpha}|f_i|<\infty,f_i=\left<f,\psi_i\right>_2\right\},
\end{equation}
for $\alpha > 0$; here $\alpha$ specifies the degree of smoothness. Although we use the Fourier basis as $\psi_i$, these two spaces are defined using the sequence $f_i$ and can be generalized to other eigenfunctions $\psi_i$. For any eigenfunctions satisfying Assumptions (A) and (B), functions in $H^{\alpha}[0,1]$ have continuous derivatives up to order $\lfloor\alpha\rfloor$, with the $\lfloor\alpha\rfloor$th derivative being H\"older continuous of order $\alpha-\lfloor\alpha\rfloor$; see~\cite{yang2017frequentist,liu2020non} for a similar discussion. With the Fourier basis as $\psi_i$ and $\alpha \in \mathbb{N}$, $W^\alpha[0,1]$ is the so-called \textit{periodic Sobolev space} (see Theorem 7.11 in \cite{wasserman2006all}) and can be equivalently described as $W^\alpha[0,1] = \left\{f:  f \in C^{\alpha-1}[0,1],  f^{(\alpha)} \in L^2[0,1],
f^{(j)}(0) = f^{(j)}(1), j = 0,\ldots, \alpha-1 \right\}.$ Periodic function spaces are commonly used when solving partial differential equations (PDEs), with periodicity being crucial in specific contexts when dealing with problems exhibiting periodic or oscillatory behavior~\citep{sani1994resume,dong2014robust}. Beyond the wide-ranging applications of PDEs in computational sciences, including fluid mechanics \citep{dong2006combined, dong2008turbulent}, the periodic Sobolev space has also been considered in nonparametric inference for smoothing spline estimation~\citep{nussbaum1985spline, sc13aos}. 

We consider kernels with polynomially decaying eigenvalues $K_\alpha$, where
$
\mu_i\asymp i^{-2\alpha}$ for $i\in\mathbb{N} 
$
and the eigenfunctions are the Fourier basis $\{\psi_i\}_{i=1}^\infty$. Examples of such kernels include the periodic Mat\'ern kernel \citep{giordano2022nonparametric}. It is well known that the eigenvalues of the classical Mat\'ern kernel with parameter satisfy $\mu_i \asymp i^{-2(\nu+1/2)}$ for $i \in \mathbb{N}$ \citep{seeger2008information,santin2016approximation}, while the eigenfunctions do not directly relate to the Fourier basis.

Verifying Assumptions (C) and (D), and invoking Theorem~\ref{thm:deriv.contraction.l2}, we obtain the nonparametric plug-in property for estimating the derivatives of functions in the H\"older class: the posterior distribution contracts at a nearly minimax optimal rate under the $L_2$ norm \citep{stone1982optimal}.

\begin{theorem}\label{thm:contraction.deriv}
Suppose $f_0 \in W^\alpha[0,1]$ or $f_0\in H^\alpha[0,1]$ for $\alpha>3/2$. If $K_\alpha$ is used in the GP prior with the regularization parameter $\lambda\asymp ({\log n}/{n})^{\frac{2\alpha}{2\alpha+1}}$, then for any $k<\alpha-3/2$ and $k\in\mathbb{N}_0$, the posterior distribution $\Pi_{n,k}(\cdot\mid\data)$ contracts at $f_0^{(k)}$ at the nearly minimax optimal rate $\epsilon_n = (\log n/n)^{\frac{\alpha-k}{2\alpha+1}}$ under the $L_2$ norm.
\end{theorem}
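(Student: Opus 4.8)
The plan is to apply Theorem~\ref{thm:deriv.contraction.l2} with the kernel $K_\alpha$, so the whole argument reduces to verifying its four hypotheses for the stated choices $\lambda\asymp(\log n/n)^{2\alpha/(2\alpha+1)}$ and $\epsilon_n=(\log n/n)^{(\alpha-k)/(2\alpha+1)}$. Assumptions (A) and (B) hold for the Fourier eigenbasis with the explicit constants $C_{k,\psi}=\sqrt2(2\pi)^k$ and $L_{k,\psi}=\sqrt2(2\pi)^{k+1}$ already recorded above, so no work is needed there; the substance is in (C) and (D).

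For Assumption (C) I would first pin down sharp orders for the effective-dimension quantities under polynomial eigenvalue decay $\mu_i\asymp i^{-2\alpha}$. Splitting the sum $\hat\kappa_m^2=\sum_i i^{2m}\mu_i/(\lambda+\mu_i)$ at the crossover index $i\asymp\lambda^{-1/(2\alpha)}$, where $\mu_i\asymp\lambda$, gives $\hat\kappa_m^2\asymp\lambda^{-(2m+1)/(2\alpha)}$ whenever $m<\alpha-1/2$; this is the content of Lemma~\ref{lem:differentiability.matern}, which I would invoke. Substituting $\lambda$ then yields $\tilde\kappa^2\lesssim\hat\kappa_0^2\asymp(n/\log n)^{1/(2\alpha+1)}=o(\sqrt{n/\log n})$ since $\alpha>1/2$; $\tilde\kappa_k^2\lesssim\hat\kappa_k^2\asymp(n/\log n)^{(2k+1)/(2\alpha+1)}$, which a one-line exponent computation identifies with $n\epsilon_n^2/\log n$ up to constants; and $\hat\kappa_{k+1}^2\asymp(n/\log n)^{(2k+3)/(2\alpha+1)}=O(n)$ because $k<\alpha-3/2$ forces $(2k+3)/(2\alpha+1)<1$. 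Note the constraint $k<\alpha-3/2$ is used twice: it ensures $k+1<\alpha-1/2$ so that the closed-form order for $\hat\kappa_{k+1}^2$ is valid, and it keeps the resulting exponent below one.

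Assumption (D) is where the real work lies: one must show that the $k$-th derivative of the posterior mean, which coincides with a kernel ridge regression estimator differentiated $k$ times, converges to $f_0^{(k)}$ at rate $\epsilon_n$ in $L_2$ or $L_\infty$. I would establish this through the non-asymptotic bias--variance decomposition of Section~\ref{sec:non-asymptotic} together with the equivalent kernel $\tilde K$: the deterministic (bias) term is controlled using $f_0\in H^\alpha[0,1]$ and Assumption (A) on $\|\phi_i^{(k)}\|_\infty$, and the stochastic term by a concentration argument whose variance proxy is exactly $\tilde\kappa_k^2$. This is recorded as Lemma~\ref{thm:minimax.diff.matern}, giving $\|\hat f_n^{(k)}-f_0^{(k)}\|_p\lesssim(\log n/n)^{(\alpha-k)/(2\alpha+1)}$ with $\PP_0^{(n)}$-probability tending to one; the logarithmic inflation over the classical rate $n^{-(\alpha-k)/(2\alpha+1)}$ is the price of the slightly enlarged $\lambda$ needed to push the $L_\infty$/non-asymptotic bounds through. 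With (A)--(D) in hand, Theorem~\ref{thm:deriv.contraction.l2} immediately yields the claimed contraction, and minimax optimality up to the logarithmic factor follows from the lower bound of \citet{stone1982optimal} for derivative estimation over $H^\alpha[0,1]$.

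I expect the main obstacle to be Assumption (D): obtaining a uniform (sup-norm) non-asymptotic rate for the differentiated KRR estimator requires carefully tracking how $k$-fold differentiation amplifies high-frequency components --- each derivative costs a factor $i^k$ on the eigenfunctions and $i^{k+1}$ on their moduli of continuity --- and balancing this against the regularization so that the bias term attains the \emph{optimal} order rather than a suboptimal one. It is precisely at this step that the thresholds $\alpha>3/2$ and $k<\alpha-3/2$ cannot be relaxed, since they are what keep the relevant eigenvalue series summable.
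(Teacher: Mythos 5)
Your proposal is correct and follows essentially the same route as the paper: both verify Assumptions (A)--(D) for the Fourier eigenbasis and the stated $\lambda$, using Lemma~\ref{lem:differentiability.matern} for the effective-dimension orders in (C) and Lemma~\ref{thm:minimax.diff.matern} for the convergence rate in (D), and then invoke Theorem~\ref{thm:deriv.contraction.l2}. Your exponent computations and your accounting of where $k<\alpha-3/2$ enters match the paper's argument.
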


The restriction $\alpha > 3/2$ in the theorem arises from technical requirements to establish posterior contraction for derivatives of order $k$. When applying the general theory to the specific case of using kernel $K_\alpha$, we find that $\alpha > k + 3/2$ is necessary instead of $\alpha > k + 1/2$, which implies $\alpha > 3/2$ for $k \geq 0$.

\begin{remark}
Given the smoothness level of the regression function, the rate-optimal estimation of $f_0$ and its derivatives are achieved under the same choice of the regularization parameter $\lambda$ that does not depend on the derivative order (Theorem~\ref{thm:exp.contraction.deriv} and Theorem~\ref{thm:contraction.deriv}). Therefore, the plug-in GP prior achieves optimal posterior contraction rates simultaneously for all derivative orders, leading to easy parameter tuning. This nonparametric plug-in property in the context of derivative estimation is not seen in some popular nonparametric methods, including smoothing splines and local polynomial regression; see \cite{liu2020non} for more discussion. In the Bayesian paradigm, \cite{yoo2016supremum} established this property for B-splines, and our work reassuringly suggests that GP priors attain the same remarkable property.
We compare the finite-sample performance of these two methods in Section~\ref{sec:simulation}. 
\end{remark}
The special examples in this section focus on covariance kernels and function classes generated by the Fourier basis system, whose practical relevance was discussed earlier. Our general results, including Theorem~\ref{thm:deriv.contraction.l2} and Lemma~\ref{lem:approx.error}, continue to hold for other kernels and function spaces. However, establishing minimax optimality in those settings often requires more delicate analysis and careful selection of GP kernels, even to ensure convergence of point estimators. Future work may extend our results to broader classes of kernels by incorporating tailored rate calculations for key quantities such as $\tilde{\kappa}_{m}$ in Equation~\ref{eq:high.order.kappa} (see Section~\ref{sec:non-asymptotic}), or by leveraging Fourier-analytic techniques as in~\cite{van2008rates}. This represents an interesting direction for further investigation, and some results for additional function spaces will be reported elsewhere. In addition, it would be of interest to extend the results in this section to the $L^\infty$ norm, which is also covered by Theorem~\ref{thm:deriv.contraction.l2}. The main challenge lies in establishing the required convergence rate for the plug-in posterior mean estimator, as in Assumption (D), which involves more intricate analysis due to the nature of the $L^\infty$ norm, particularly when derivatives are involved.

\subsection{Optimal and adaptive hyperparameters tuning}\label{sec:MMLE}

In this section, we study a data-driven empirical Bayes approach for choosing the two hyperparameters: the error variance $\sigma^2$ and the regularization parameter $\lambda$. We will show that our posterior contraction results continue to hold with the estimated $\sigma$. For the data-dependent selection of $\lambda$, we will show that it meets the optimal rate condition for all derivative orders, and adapts to the smoothness level of the true function with an oversmooth kernel.

We first discuss the estimation of the error variance $\sigma^2$. Let $\sigma^2_0$ be its true value. \cite{van2009adaptive} proposed a fully Bayesian scheme by endowing the standard error $\sigma$ with a hyperprior, which is supported on a compact interval $[a,b]\subset (0,\infty)$ that contains $\sigma_0$ with a Lebesgue density bounded away from zero. This approach has been followed by many others \citep{bhattacharya2014anisotropic,li2020comparing}. \cite{de2013semiparametric} showed a Bernstein-von Mises theorem for the marginal posterior of $\sigma$, where the prior for $\sigma$ is relaxed to be supported on $(0,\infty)$. In other words, it is possible to simultaneously estimate the regression function at an optimal nonparametric rate and the standard deviation at a parametric rate.

Here we consider an empirical Bayes approach, which is widely used in practice and eliminates the need for jointly sampling $f$ and $\sigma$. Under model~\eqref{eq:model}, the marginal likelihood is
\begin{equation}\label{eq:marginal}
Y\mid X\sim N(0, \sigma^2(n\lambda)^{-1}K(X,X)+\sigma^2I_n).
\end{equation}
We estimate $\sigma^2$ by its maximum marginal likelihood estimator (MMLE)
\begin{equation}\label{eq:EB.sigma}
\hat{\sigma}^2_n=\lambda Y^T[K(X,X)+n\lambda I_n]^{-1}Y.
\end{equation}
We introduce a modified prior, denoted by $f \sim \GP(0, \hat{\sigma}^2_n (n \lambda)^{-1} K)$ by substituting $\sigma^2$ in the original GP prior with $\hat{\sigma}^2_n$. The Empirical Bayes approach involves replacing $\sigma^2$ with its MMLE $\hat\sigma^2_n$ within the conditional posterior $\Pi_{n,k} (\cdot\mid\data)|_{\sigma^2 = \hat \sigma^2_n}$.

The next theorem shows that the established theory holds under the empirical Bayes scheme. We consider a series representation of $f_0$ using the Fourier basis $\phi_i$ by $f_0 = \s f_i\phi_i$. Let $u_1\geq u_2\geq \ldots\geq u_n$ denote the eigenvalues of the kernel matrix $K(X,X)$.

\begin{theorem}\label{thm:eb.contraction}
Suppose $\s f_i^2/\mu_i^{2r}<\infty$ for some $0<r\leq 1/2$, and $\lambda$ is chosen such that $n^{-1} \sum_{i=1}^{n}\frac{u_i}{n\lambda+u_i} =o(1)$ in $\PP_0^{(n)}$-probability and $\|f_\lambda - f_0\|_\infty = o(1)$. Then, $\hat\sigma_n^2 \rightarrow \sigma_0^2$ in $\PP_0^{(n)}$-probability. Furthermore, Theorem~\ref{thm:deriv.contraction.l2} holds under the empirical Bayes scheme, i.e., $\epsilon_n$ is a posterior contraction rate of $\Pi_{n,k} (\cdot\mid\data)|_{\sigma^2 = \hat \sigma^2_n}$ at $f_0^{(k)}$ under the same conditions therein.
\end{theorem}

\begin{remark}
The parameter $r$ can be understood as a smoothness parameter of $f_0$. When $r=1/2$, the condition $\s f_i^2/\mu_i<\infty$ is equivalent to that $f_0$ belongs to the reproducing kernel Hilbert space $\bbH$ generated by $K$. Hence, by allowing $r \in (0, 1/2)$, Theorem~\ref{thm:eb.contraction} is applicable for $f_0$ that is not in $\bbH$. The other two conditions on $\lambda$ are mild. Note that $n^{-1} \sum_{i=1}^{n}\frac{u_i}{n\lambda+u_i} \leq n^{-1} \sum_{i=1}^{n} \frac{u_i}{n\lambda} \leq \kappa^2 (n\lambda)^{-1}$. The second condition holds if $n\lambda \rightarrow \infty$. All these conditions will be verified in the special examples considered in the next section.
\end{remark}

By verifying the conditions in Theorem~\ref{thm:eb.contraction}, the established results of posterior contraction in the above two examples hold under the empirical Bayes scheme.

\begin{corollary}\label{thm:eb.example}
Theorem~\ref{thm:exp.contraction.deriv} and Theorem~\ref{thm:contraction.deriv} hold for the posterior measure $\Pi_{n,k} (\cdot\mid\data)|_{\sigma^2 = \hat \sigma^2_n}$ with the regularization parameter $\lambda \asymp \log n / n$ and $\lambda\asymp ({\log n}/{n})^{\frac{2\alpha}{2\alpha+1}}$, respectively.
\end{corollary}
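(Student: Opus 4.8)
The plan is to deduce the corollary directly from Theorem~\ref{thm:eb.contraction}, so the whole argument reduces to a single additional verification. First I would recall that Theorem~\ref{thm:exp.contraction.deriv} and Theorem~\ref{thm:contraction.deriv} are established by checking Assumptions (A)--(D) for the kernels $K_\gamma$ and $K_\alpha$ under the stated choices of $\lambda$, and then invoking Theorem~\ref{thm:deriv.contraction.l2}. Since Theorem~\ref{thm:eb.contraction} asserts that, under those \emph{same} conditions, the conclusion of Theorem~\ref{thm:deriv.contraction.l2} continues to hold for the empirical Bayes posterior $\Pi_{n,k,\mathrm{EB}}(\cdot\mid\data)$ as soon as the extra requirement $n\lambda\to\infty$ is met, the only remaining task is to confirm this last condition in each of the two examples.

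Next I would carry out the two elementary limit computations. For the analytic-type example, $\lambda\asymp\log n/n$ gives $n\lambda\asymp\log n\to\infty$. For the H\"older example, $\lambda\asymp(\log n/n)^{\frac{2\alpha}{2\alpha+1}}$ gives
\[
n\lambda\asymp n^{1-\frac{2\alpha}{2\alpha+1}}(\log n)^{\frac{2\alpha}{2\alpha+1}}=n^{\frac{1}{2\alpha+1}}(\log n)^{\frac{2\alpha}{2\alpha+1}}\longrightarrow\infty
\]
because $\alpha>0$. Hence the hypothesis $n\lambda\to\infty$ of Theorem~\ref{thm:eb.contraction} holds in both cases, and the contraction rates $\epsilon_n=(\log n)^{k+1}/\sqrt n$ and $\epsilon_n=(\log n/n)^{\frac{\alpha-k}{2\alpha+1}}$ established for $\Pi_{n,k}(\cdot\mid\data)$ transfer verbatim to $\Pi_{n,k,\mathrm{EB}}(\cdot\mid\data)$, which is exactly the assertion of the corollary.

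I do not expect a genuine obstacle here: all the delicate work — controlling the maximum marginal likelihood estimator $\hat\sigma_n^2$ and showing it does not perturb the contraction argument (e.g., that $\hat\sigma_n^2$ stays bounded away from $0$ and $\infty$ with probability tending to one, so that replacing $\sigma^2$ by $\hat\sigma_n^2$ only rescales the posterior covariance by a factor bounded above and below) — is internal to the proof of Theorem~\ref{thm:eb.contraction}. Consequently the present corollary is a bookkeeping consequence of that theorem combined with the two limit computations above, and the write-up should be correspondingly short.
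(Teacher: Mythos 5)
Your proposal is correct and matches the paper's own (implicit) argument exactly: the paper derives the corollary by verifying $n\lambda\to\infty$ for each choice of $\lambda$ and then invoking Theorem~\ref{thm:eb.contraction}, which is precisely the two limit computations you carry out. Both computations are accurate, so nothing further is needed.
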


We next turn to the regularization parameter $\lambda$ and again consider an empirical Bayes approach. In particular, we choose $\lambda$ by maximizing the marginal likelihood~\eqref{eq:marginal} with $\hat\sigma_n^2$ plugged in for $\sigma^2$. Substituting $\hat\sigma_n^2$ for $\sigma^2$, the marginal likelihood function of $\lambda$ becomes 
\begin{equation}
Y\mid X, \sigma^2 = \hat\sigma_n^2 \sim N(0,n^{-1} Y^T[K(X,X)+n\lambda I_n]^{-1}Y\cdot[K(X,X)+n\lambda I_n]),
\end{equation}
and its logarithm is denoted as $\ell (\lambda \mid X, Y, \sigma^2 = \hat\sigma_n^2)$. Then the MMLE of $\lambda$ is given by
\begin{equation}
\hat \lambda_n = \underset{\lambda > 0}{\text{argmax}} \
 \ell(\lambda \mid X, Y, \sigma^2 = \hat\sigma_n^2).
\end{equation}

The large-sample behavior of $\hat\lambda_n$ can be unstable unless some notion of \textit{exact smoothness} is imposed; see, for example,
\cite{szabo2013empirical}. For technical simplicity, and since
$W^{\alpha_0}$ is a larger class than $H^{\alpha_0}$, we focus on the Sobolev space $W^{\alpha_0}$ in what follows. We introduce refined subclasses $\widetilde W^{\alpha_0}$ and
$\widetilde A^{\gamma_0}$ of $W^{\alpha_0}$ and $A^{\gamma_0}$, respectively.
These subclasses consist of functions
$f_0=\sum_{i\ge1} f_i\psi_i$ that satisfy the usual coefficient conditions of the
ambient space and, in addition, obey a lower bound on their tail coefficients: for every $\delta>0$, there exist constants $C_\delta>0$ and
$N_\delta<\infty$ such that
\begin{equation}\label{eq:lower.bound.condition}
f_i^2 \ge C_\delta\, R(i,\delta), \qquad \text{for all } i \ge N_\delta,
\end{equation}
where $R(i,\delta)= i^{-2\alpha_0-1-\delta}$ for $\widetilde W^{\alpha_0}$ and
$R(i,\delta)= \exp\{-2(\gamma_0+\delta)i\}$ for $\widetilde A^{\gamma_0}$.

The lower bound condition in \eqref{eq:lower.bound.condition} imposes additional regularity in the tail and, in particular, ensures exact smoothness at levels indexed by $\alpha_0$ and $\gamma_0$ by excluding functions that are strictly smoother. Related assumptions
have appeared in several forms in the empirical Bayes and adaptive inference
literature, including exact coefficient decay \citep{szabo2013empirical}, the
polished tail condition \citep{szabo2015frequentist}, and related variants
\citep{gine2010confidence,bull2012honest}. Our proofs indicate that these
alternative conditions could likewise be used to establish the convergence rate of
$\hat\lambda_n$.

The following result provides the rates of $\hat\lambda_n$ for the examples considered in Section~\ref{sec:Matern}, which coincide with the theoretical rate derived in Theorem~\ref{thm:exp.contraction.deriv} and Theorem~\ref{thm:contraction.deriv} up to logarithmic terms. Notably, an oversmooth kernel is sufficient to attain these rates.

\begin{theorem}\label{thm:lambda.mmle}
\begin{enumerate}[(a)]
\item Suppose $f_0\in \widetilde{A}^{\gamma_0}[0,1]$ and the kernel is chosen to be $K_\gamma$ for $\gamma \geq \gamma_0$. Then it holds with $\PP_0^{(n)}$-probability at least $1-n^{-10}$ that $\hat\lambda_n$ is of the order $n^{-\frac{\gamma}{\gamma_0}}$ up to a logarithmic factor.
\item Suppose $f_0 \in \widetilde{W}^{\alpha_0}[0,1]$ and the kernel is chosen to be $K_\alpha$ for $\alpha \geq \alpha_0$ and $\alpha > \frac{2\alpha_0+1}{4\alpha_0-2}$. Then it holds with $\PP_0^{(n)}$-probability at least $1-n^{-10}$ that $\hat \lambda_n$ is of the order $n^{-\frac{2\alpha}{2\alpha_0+1}}$ up to a logarithmic factor.
\end{enumerate}
\end{theorem}

It is easy to see that the rates of $\hat\lambda_n$ satisfy Assumption (C). By verifying Assumption (D) using Lemma~\ref{lem:approx.error} and applying Theorem~\ref{thm:deriv.contraction.l2}, the following two theorems show that the posterior distributions with such a data-driven choice of $\sigma^2$ and $\lambda$, denoted by $\Pi_{n,k} (\cdot\mid\data)|_{\sigma^2 = \hat \sigma^2_n, \lambda = \hat\lambda_n}$, achieve the same convergence rates as obtained in Theorem~\ref{thm:exp.contraction.deriv} and Theorem~\ref{thm:contraction.deriv}. This establishes the nonparametric plug-in property for the plug-in GP method under the empirical Bayes scheme.

\begin{theorem}\label{thm:adaptive.exp}
Suppose $f_0\in \widetilde{A}^{\gamma_0}[0,1]$. If $K_\gamma$ is used in the GP prior with $\gamma\geq\gamma_0>1/2$ and $\lambda$ is estimated by the MMLE $\hat\lambda_n$, then for any $k\in\mathbb{N}_0$, the posterior distribution $\Pi_{n,k} (\cdot\mid\data)|_{\sigma^2 = \hat \sigma^2_n, \lambda = \hat\lambda_n}$ contracts at $f_0^{(k)}$ at a nearly parametric rate $\epsilon_n = 1/\sqrt{n}$ up to logarithmic factors under the $L_2$ norm.
\end{theorem}

\begin{theorem}\label{thm:adaptive.poly}
Suppose $f_0 \in \widetilde{W}^{\alpha_0}[0,1]$. If $K_\alpha$ is used in the GP prior with $\alpha \geq \alpha_0 > k+1/2$, $\alpha > \frac{2\alpha_0+1}{4\alpha_0-2}$ and $\lambda$ is estimated by the MMLE $\hat\lambda_n$, then for any $k\in\mathbb{N}_0$, the posterior distribution $\Pi_{n,k} (\cdot\mid\data)|_{\sigma^2 = \hat \sigma^2_n, \lambda = \hat\lambda_n}$ contracts at $f_0^{(k)}$ at a nearly minimax optimal rate $\epsilon_n = n^{-\frac{\alpha_0-k}{2\alpha_0+1}}$ up to logarithmic factors under the $L_2$ norm.
\end{theorem}

Therefore, we have shown that empirical Bayes remarkably adapts to the unknown smoothness level of the function and achieves optimal estimation for derivatives of any order. This is reminiscent of earlier literature on adaptive empirical Bayes; for example, \cite{castillo2018empirical} and \cite{castillo2020spike} illustrate how empirical Bayes estimation of hyperparameters yields adaptive posterior contraction rates in the sparse normal means model with a spike and slab prior. Similarly to the preceding discussion on $\sigma$, one may alternatively consider a fully Bayesian approach by placing a prior on $\lambda$ as in \cite{van2009adaptive} that has been followed by others such as \cite{ bhattacharya2014anisotropic, Li+Ghosal:17}, although our hyperparameter $\lambda$ has a slightly different meaning. It has been shown that, under conditions, the fully Bayes posterior achieves the same contraction rate as the empirical Bayes posterior in the white noise model \citep{szabo2013empirical} and other nonparametric models \citep{rousseau2017asymptotic}.

\section{Non-asymptotic analysis of key quantities}
\label{sec:non-asymptotic}
In this section, we present the error rates for a list of key quantities in GP regression that are useful for deriving our nonparametric plug-in theory and verifying theorem assumptions.

We first study the rates for the posterior variances of derivatives of Gaussian processes, i.e., the variance of $\Pi_{n,k}(\cdot\mid\data)$; this error bound will be used in establishing Theorem~\ref{thm:deriv.contraction.l2}. Here we consider a general kernel $K\in C^{2k}([0,1],[0,1])$. The posterior covariance of the $k$-th derivative of $f$, denoted by $\tilde{V}^k_n(x, x')$, is given in Equation~\eqref{eq:deriv.variance}. Comparing Equations~\eqref{eq:variance} and~\eqref{eq:deriv.variance}, we can rewrite $\tilde{V}^k_n(x, x')$ as 
$
\tilde{V}^k_n(x, x') = \partial^k_{x}\partial^k_{x'}\hat{V}_n(x,x').
$
Therefore, the posterior covariance of the derivative of GP is exactly the mixed derivative of the posterior covariance of the original GP. This is expected as the differential operator is linear. We write $\tilde{V}^{k}_n(x)=\tilde{V}^k_n(x,x)$ for the posterior variance. We caution that, however, $\tilde{V}^{k}_n(x)$ may not be obtained by taking the derivatives of $\hat{V}_n(x)$ since differentiation and evaluation at $x=x'$ may not be exchangeable. The next lemma provides a non-asymptotic error bound for $\tilde{V}^{k}_n(x)$.

\begin{lemma}\label{thm:equivalent.sigma.deriv}
Let $K\in C^{2k}([0,1],[0,1])$ and suppose Assumption (A) holds. If $\lambda$ is chosen such that $\tilde{\kappa}^2=o(\sqrt{n/\log n})$, then with $\PP_0^{(n)}$-probability at least $1 - n^{-10}$, 
$\|\tilde{V}^{k}_{n}\|_\infty \leq {2\sigma^2\tilde{\kappa}_{k}^2}/{n}.$
\end{lemma}

Next, we present the rates for the higher-order analog of the effective dimension under specific examples, which help verify Assumption (C). We consider $K_\gamma$ and $K_\alpha$ defined in Section~\ref{sec:Matern} for the GP prior and a uniform sampling process. We denote the equivalent kernel of $K_\gamma$ and $K_\alpha$ by $\tilde K_\gamma$ and $\tilde K_\alpha$, respectively.

Let the higher-order analog of the effective dimension for $K_\gamma$ and $K_\alpha$ be $\tilde{\kappa}_{\gamma,m}^2$ and $\tilde{\kappa}_{\alpha,m}^2$, respectively, where $m\in\mathbb{N}_0$ and the subscripts $\gamma$ and $\alpha$ emphasize the use of specific kernels compared to the general definition in Equation~\eqref{eq:high.order.kappa}. Note that we allow $m=0$, which corresponds to $\tilde{\kappa}_{\gamma,0}^2=\tilde{\kappa}_{\gamma}^2$ and $\tilde{\kappa}_{\alpha,0}^2=\tilde{\kappa}_{\alpha}^2$. The same convention in notation applies to $\hat{\kappa}_{\gamma,m}^2$ and $\hat{\kappa}_{\alpha,m}^2$. Lemma~\ref{lem:differentiability.exp} and Lemma~\ref{lem:differentiability.matern} provide the exact order of $\tilde{\kappa}_{\gamma,m}^2$, $\hat{\kappa}_{\gamma,m}^2$ and $\tilde{\kappa}_{\alpha,m}^2$, $\hat{\kappa}_{\alpha,m}^2$ with respect to the regularization parameter $\lambda$.

\begin{lemma}\label{lem:differentiability.exp}
$\tilde K_\gamma\in C^{2m}([0,1]\times[0,1])$ and $\tilde{\kappa}_{\gamma,m}^2 \asymp \hat{\kappa}_{\gamma,m}^2 \asymp (-\log \lambda)^{2m+1}$ for any $\gamma>0$, $0<\lambda<1$ and $m\in\mathbb{N}_0$.
\end{lemma}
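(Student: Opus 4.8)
The plan is to reduce everything to elementary estimates on the scalar series $\sum_{i\ge1} i^{2m}\nu_i$, where $\nu_i=\mu_i/(\lambda+\mu_i)$ and $\mu_i\asymp e^{-2\gamma i}$ are the eigenvalues of $K_\gamma$, and to use throughout the Mercer expansion $\tilde K_\gamma(x,x')=\sum_{i\ge1}\nu_i\psi_i(x)\psi_i(x')$ with $\{\psi_i\}$ the Fourier basis \eqref{eq:fourier}. The key elementary facts I will use are that the $\psi_i$ satisfy Assumption (A) (indeed $\|\psi_i^{(j)}\|_\infty\asymp i^j$) and, by a direct computation, $\|\psi_i^{(m)}\|_{L^2[0,1]}^2\asymp i^{2m}$.

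For the smoothness claim $\tilde K_\gamma\in C^{2m}([0,1]\times[0,1])$, I would differentiate the eigen-series termwise: for any $a,b\le m$ the candidate series $\sum_{i\ge1}\nu_i\psi_i^{(a)}(x)\psi_i^{(b)}(x')$ has general term bounded in absolute value, using Assumption (A) and $\nu_i\le1$, by $C_{a,\psi}C_{b,\psi}\,\nu_i\,i^{a+b}\le C\,i^{2m}\mu_i/\lambda$. Since $\mu_i\asymp e^{-2\gamma i}$ decays exponentially, $\sum_{i\ge1} i^{2m}\mu_i<\infty$, so the Weierstrass $M$-test gives uniform convergence. Starting from convergence of the undifferentiated series and iterating the standard theorem on termwise differentiation of uniformly convergent series then shows all mixed partials $\partial_x^a\partial_{x'}^b\tilde K_\gamma$ with $a,b\le m$ exist and are continuous; in particular $\partial_x^k\partial_{x'}^k\tilde K_\gamma\in C([0,1],[0,1])$ for $1\le k\le m$, and the same uniform convergence shows $x\mapsto\sum_i\nu_i\psi_i^{(m)}(x)^2$ is a continuous function (used below).

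For the rate of $\hat\kappa_{\gamma,m}^2=\sum_{i\ge1} i^{2m}\mu_i/(\lambda+\mu_i)\asymp\sum_{i\ge1} i^{2m}e^{-2\gamma i}/(\lambda+e^{-2\gamma i})$, I would split the sum at the threshold $i^\star\asymp(-\log\lambda)/(2\gamma)$ at which $e^{-2\gamma i}\asymp\lambda$. For $i\le i^\star$ one has $e^{-2\gamma i}\gtrsim\lambda$, hence $\mu_i/(\lambda+\mu_i)\asymp1$, so this block contributes $\asymp\sum_{i\le i^\star}i^{2m}\asymp(i^\star)^{2m+1}\asymp(-\log\lambda)^{2m+1}$. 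For $i>i^\star$ one bounds $\mu_i/(\lambda+\mu_i)\le\mu_i/\lambda$, and since the polynomial-times-geometric tail $\sum_{i>i^\star}i^{2m}e^{-2\gamma i}$ is dominated (up to a constant depending only on $\gamma,m$) by its leading term $(i^\star)^{2m}e^{-2\gamma i^\star}\asymp(-\log\lambda)^{2m}\lambda$, dividing by $\lambda$ shows this block is $O((-\log\lambda)^{2m})$, of strictly smaller order. Hence $\hat\kappa_{\gamma,m}^2\asymp(-\log\lambda)^{2m+1}$. The upper bound $\tilde\kappa_{\gamma,m}^2\lesssim\hat\kappa_{\gamma,m}^2$ is then immediate from Assumption (A) — it is exactly \eqref{eq:kappa.k+1}.

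It remains to lower bound $\tilde\kappa_{\gamma,m}^2=\sup_{x\in[0,1]}\sum_i\nu_i\psi_i^{(m)}(x)^2$ by $\hat\kappa_{\gamma,m}^2$. I would use that the supremum of a continuous function on $[0,1]$ dominates its average: by Tonelli (all terms nonnegative), $\tilde\kappa_{\gamma,m}^2\ge\int_0^1\sum_i\nu_i\psi_i^{(m)}(x)^2\,dx=\sum_i\nu_i\|\psi_i^{(m)}\|_{L^2[0,1]}^2\asymp\sum_i i^{2m}\nu_i=\hat\kappa_{\gamma,m}^2$, the single term $\psi_1\equiv1$ contributing $0$ when $m\ge1$ and being irrelevant to the order. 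Combining this with the upper bound and the previous paragraph gives $\tilde\kappa_{\gamma,m}^2\asymp\hat\kappa_{\gamma,m}^2\asymp(-\log\lambda)^{2m+1}$, as claimed. I expect the only real bookkeeping to be in the split-the-sum step — making the implicit constants in "$e^{-2\gamma i}\asymp\lambda$ near $i\asymp i^\star$" and in the geometric-tail bound uniform over $\lambda\in(0,1)$ — but this is routine and there is no conceptual obstacle.
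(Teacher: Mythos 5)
Your proof is correct, and it takes a genuinely more self-contained route than the paper's. The paper's proof delegates the equivalence $\tilde{\kappa}_{\gamma,m}^2 \asymp \hat{\kappa}_{\gamma,m}^2$ and the reduction of the sum to an integral to the proof of Lemma 11 in \cite{liu2020non}, then evaluates $\int_0^\infty x^{2m}/(\lambda e^{2\gamma x}+1)\,dx$ by recognizing it as a Fermi--Dirac integral $F_{2m}(\eta)$ with $\eta=-\log\lambda$ and quoting the classical asymptotic $F_{2m}(\eta)\sim \eta^{2m+1}/(2m+1)$; the smoothness of $\tilde K_\gamma$ is asserted to follow from boundedness of $\tilde\kappa_{\gamma,m}^2$. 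You instead (i) prove the two-sided comparison directly --- the upper bound is \eqref{eq:kappa.k+1} via Assumption (A), and the lower bound comes from the clean observation that the supremum of $x\mapsto\sum_i\nu_i\psi_i^{(m)}(x)^2$ dominates its average, which by Tonelli and $\|\psi_i^{(m)}\|_{L^2}^2\asymp i^{2m}$ equals $\hat\kappa_{\gamma,m}^2$ up to constants; (ii) obtain the rate by splitting the sum at $i^\star\asymp(-\log\lambda)/(2\gamma)$, with the head contributing $(i^\star)^{2m+1}$ and the geometric tail being of lower order; and (iii) justify $C^{2m}$ smoothness explicitly by termwise differentiation under the Weierstrass $M$-test, which is actually more careful than the paper's one-line assertion. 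What the paper's route buys is brevity by citation and a sharp constant from the Fermi--Dirac asymptotic; what yours buys is a fully elementary, self-contained argument whose only inputs are Assumption (A) and the exponential eigenvalue decay. One small caveat applies equally to both arguments: the stated equivalence degenerates as $\lambda\to1^-$ (where $(-\log\lambda)^{2m+1}\to0$ while $\hat\kappa_{\gamma,m}^2$ stays bounded below), so the claim should be read as an asymptotic for small $\lambda$, which is the only regime used downstream.
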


\begin{lemma}\label{lem:differentiability.matern}
$\tilde K_\alpha\in C^{2m}([0,1]\times[0,1])$ and $\tilde{\kappa}_{\alpha,m}^2 \asymp \hat{\kappa}_{\alpha,m}^2 \asymp \lambda^{-\frac{2m+1}{2\alpha}}$ for any $\alpha>m+1/2$ and $m\in\mathbb{N}_0$.
\end{lemma}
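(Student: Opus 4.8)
\emph{Proof proposal.} The plan is to work entirely with the eigen-expansion $\tilde K_\alpha(x,x')=\sum_{i=1}^\infty \nu_i\,\psi_i(x)\psi_i(x')$ with $\nu_i=\mu_i/(\lambda+\mu_i)$, where $\{\psi_i\}$ is the Fourier basis \eqref{eq:fourier} and $\mu_i\asymp i^{-2\alpha}$; recall from Section~\ref{sec:Matern} that here $\PP_X$ is Lebesgue and $\{\psi_i\}$ satisfies Assumptions (A) and (B) with $C_{k,\psi}=\sqrt2(2\pi)^k$, so in particular $\|\psi_i^{(m)}\|_\infty\lesssim i^m$, and $\int_0^1\psi_i^{(m)}(x)^2\,dx\asymp i^{2m}$ for $i\ge 2$ since the $m$-th Fourier derivative is, up to sign and phase, $(2\pi\lfloor i/2\rfloor)^m$ times a cosine or sine of the same frequency. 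First I would establish $\tilde K_\alpha\in C^{2m}([0,1]\times[0,1])$ together with the term-by-term formula $\partial_x^m\partial_{x'}^m\tilde K_\alpha(x,x')=\sum_i\nu_i\psi_i^{(m)}(x)\psi_i^{(m)}(x')$. By the Weierstrass $M$-test it suffices that $\sum_i\nu_i\|\psi_i^{(m)}\|_\infty^2<\infty$: using $\nu_i\le\mu_i/\lambda\asymp i^{-2\alpha}/\lambda$ and $\|\psi_i^{(m)}\|_\infty^2\lesssim i^{2m}$, this series is dominated by $\lambda^{-1}\sum_i i^{2m-2\alpha}$, which converges precisely because $\alpha>m+1/2$. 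Uniform convergence of the differentiated series then permits differentiating term by term in each variable, yielding both the claimed smoothness and, on the diagonal, $\partial_x^m\partial_{x'}^m\tilde K_\alpha(x,x)=\sum_i\nu_i\psi_i^{(m)}(x)^2$.

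Next I would pin down $\tilde\kappa_{\alpha,m}^2=\sup_{x\in[0,1]}\sum_i\nu_i\psi_i^{(m)}(x)^2$ against $\hat\kappa_{\alpha,m}^2=\sum_i i^{2m}\nu_i$ up to constants. The upper bound $\tilde\kappa_{\alpha,m}^2\le C_{k,\psi}^2\hat\kappa_{\alpha,m}^2$ is immediate from Assumption (A), which gives $\psi_i^{(m)}(x)^2\le C_{k,\psi}^2 i^{2m}$ pointwise. For the matching lower bound I replace the supremum by the average over $[0,1]$: $\tilde\kappa_{\alpha,m}^2\ge\int_0^1\sum_i\nu_i\psi_i^{(m)}(x)^2\,dx=\sum_i\nu_i\int_0^1\psi_i^{(m)}(x)^2\,dx\asymp\sum_{i\ge2}\nu_i i^{2m}\asymp\hat\kappa_{\alpha,m}^2$, where the interchange of sum and integral is justified by the uniform convergence from the previous step (or by monotone convergence), and the evaluation $\int_0^1\psi_i^{(m)}(x)^2\,dx\asymp i^{2m}$ is the Parseval-type computation noted above (the $i=1$ term is $O(1)$ and harmless since $\hat\kappa_{\alpha,m}^2\to\infty$). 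This gives $\tilde\kappa_{\alpha,m}^2\asymp\hat\kappa_{\alpha,m}^2$.

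It remains to evaluate $\hat\kappa_{\alpha,m}^2=\sum_i i^{2m}\mu_i/(\lambda+\mu_i)$. Since $t\mapsto t/(\lambda+t)$ is increasing, $\mu_i\asymp i^{-2\alpha}$ yields $\hat\kappa_{\alpha,m}^2\asymp\sum_i i^{2m-2\alpha}/(\lambda+i^{-2\alpha})$; the multiplicative constants in $\mu_i\asymp i^{-2\alpha}$ can be absorbed into a rescaling of $\lambda$, which alters the final power of $\lambda$ only by a constant factor. I then split the sum at $N:=\lambda^{-1/(2\alpha)}$, the index where $\mu_i\asymp\lambda$. For $i\le N$ one has $\lambda+i^{-2\alpha}\asymp i^{-2\alpha}$, so the head is $\asymp\sum_{i\le N}i^{2m}\asymp N^{2m+1}=\lambda^{-(2m+1)/(2\alpha)}$. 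For $i> N$ one has $\lambda+i^{-2\alpha}\asymp\lambda$, so the tail is $\asymp\lambda^{-1}\sum_{i>N}i^{2m-2\alpha}\asymp\lambda^{-1}N^{2m-2\alpha+1}=\lambda^{-(2m+1)/(2\alpha)}$, where both the convergence of $\sum_{i>N}i^{2m-2\alpha}$ and the order of its value use $\alpha>m+1/2$. Both pieces are of order $\lambda^{-(2m+1)/(2\alpha)}$, so $\hat\kappa_{\alpha,m}^2\asymp\lambda^{-(2m+1)/(2\alpha)}$, and together with the previous step this proves the lemma.

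I expect the only genuine subtlety to be the \emph{lower} bound $\tilde\kappa_{\alpha,m}^2\gtrsim\hat\kappa_{\alpha,m}^2$ — that is, ruling out the pointwise supremum of $\sum_i\nu_i\psi_i^{(m)}(x)^2$ being much smaller than the nominal sum $\sum_i\nu_i i^{2m}$ of its coefficients. The averaging argument above handles this cleanly, leaning on $\PP_X$ being Lebesgue and on the Fourier derivatives $\psi_i^{(m)}$ having $L^2[0,1]$-norm of the exact order $i^m$; the $C^{2m}$ regularity and the dyadic sum-splitting are routine once Assumption (A) and the eigenvalue decay $\mu_i\asymp i^{-2\alpha}$ are in hand.
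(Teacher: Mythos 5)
Your proof is correct, and it is substantially more self-contained than the paper's. The paper's own proof of Lemma~\ref{lem:differentiability.matern} consists of a single display: it defers the key equivalence $\tilde{\kappa}_{\alpha,m}^2 \asymp \hat{\kappa}_{\alpha,m}^2$ entirely to "the proof of Lemma 11 in \cite{liu2020non}" and then asserts $\sum_{i=1}^{\infty} i^{2m-2\alpha}/(\lambda+i^{-2\alpha}) \asymp \lambda^{-(2m+1)/(2\alpha)}$ without computation; it does not address the $C^{2m}$ claim at all (unlike the proof of the companion Lemma~\ref{lem:differentiability.exp}, which at least gestures at it). You reconstruct all three ingredients: the Weierstrass $M$-test with $\nu_i \le \mu_i/\lambda$ to justify term-by-term differentiation and hence $\tilde K_\alpha \in C^{2m}$ (correctly isolating $\alpha > m+1/2$ as the condition making $\sum_i i^{2m-2\alpha}$ converge); the upper bound $\tilde{\kappa}_{\alpha,m}^2 \lesssim \hat{\kappa}_{\alpha,m}^2$ from Assumption (A); and — the genuine content — the lower bound via replacing the supremum by the average and using $\int_0^1 \psi_i^{(m)}(x)^2\,dx \asymp i^{2m}$, which is exactly the step that makes Lebesgue $\PP_X$ and the Fourier basis essential. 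Your split of $\hat{\kappa}_{\alpha,m}^2$ at $N = \lambda^{-1/(2\alpha)}$ with both head and tail of order $N^{2m+1}$ is the standard computation and is carried out correctly, again using $\alpha > m+1/2$ for the tail. The only cosmetic caveat is that the $i=1$ term and the absorption of constants into $\lambda$ should be read in the regime $\lambda \to 0$ implicit in the lemma, which you flag appropriately. In short: same underlying mathematics, but you supply a complete standalone argument where the paper supplies a citation.
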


We also derive non-asymptotic convergence rates of $\hat{f}_n^{(k)}$ for analytic-type functions and periodic Sobolev and H\"older functions considered in Section~\ref{sec:Matern}. These results facilitate justifying Assumption (D).

\begin{lemma}\label{thm:minimax.diff.exp}
Suppose $f_0\in A^\gamma[0,1]$ for $\gamma>1/2$. If $K_\gamma$ is used in the GP prior, then for any $k\in\mathbb{N}_0$ it holds with $\PP_0^{(n)}$-probability at least $1-n^{-10}$ that
$
\|\hat{f}_n^{(k)}-f_0^{(k)}\|_2\lesssim {(\log n)^{k+1}}/{\sqrt{n}},
$
with the corresponding choice of regularization parameter $\lambda\asymp \log n/n$.
\end{lemma}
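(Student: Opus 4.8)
The plan is a bias--variance decomposition analyzed through the equivalent kernel $\tilde K_\gamma$, after which one plugs in the eigenvalue decay $\mu_i\asymp e^{-2\gamma i}$ and the rates from Lemma~\ref{lem:differentiability.exp}. Write $Y=f_0(X)+\varepsilon$ and let $\bar f_n(x):=K(x,X)[K(X,X)+n\lambda I_n]^{-1}f_0(X)$ be the estimator fed with noiseless responses. Since differentiation is linear, by \eqref{eq:f.hat.deriv},
\[
\hat f_n^{(k)}-f_0^{(k)} \;=\; \underbrace{\big(\bar f_n^{(k)}-f_0^{(k)}\big)}_{\text{bias}} \;+\; \underbrace{K_{k0}(\cdot,X)\,[K(X,X)+n\lambda I_n]^{-1}\varepsilon}_{\text{noise}},
\]
and I would bound the $L_2$ norm (for the bias, the $L_\infty$ norm, which dominates since $\PP_X$ is Lebesgue on $[0,1]$) of each summand on an event of $\PP_0^{(n)}$-probability at least $1-\tfrac12 n^{-10}$.

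For the bias I would first pass, via the operator-theoretic framework and a high-probability concentration of the empirical sampling operator around the population integral operator $L_{K_\gamma}$ (valid because $\tilde\kappa^2=\tilde\kappa_{\gamma,0}^2\asymp -\log\lambda\asymp\log n=o(\sqrt{n/\log n})$ by Lemma~\ref{lem:differentiability.exp}), to the population bias $D^k(L_{K_\gamma}+\lambda)^{-1}L_{K_\gamma}f_0-f_0^{(k)}$, whose $i$-th Fourier coefficient before differentiation equals $-\lambda f_{0,i}/(\lambda+\mu_i)$ with $f_{0,i}=\langle f_0,\psi_i\rangle_2$. Applying Assumption (A) to move $D^k$ past the sum,
\[
\big\|D^k(L_{K_\gamma}+\lambda)^{-1}L_{K_\gamma}f_0-f_0^{(k)}\big\|_2 \;\le\; C_{k,\psi}\sum_{i\ge1}\frac{\lambda}{\lambda+\mu_i}\,|f_{0,i}|\,i^{k}.
\]
Using $|f_{0,i}|\lesssim e^{-\gamma_0 i}$ (from $f_0\in A^{\gamma_0}[0,1]$), $\mu_i\asymp e^{-2\gamma i}$ and $\lambda\asymp1/n$, I would split the sum at the crossover index $i_\star\asymp(\log n)/(2\gamma)$ where $\mu_{i_\star}\asymp\lambda$: for $i\le i_\star$ bound $\lambda/(\lambda+\mu_i)\lesssim\lambda/\mu_i\asymp e^{2\gamma i}/n$, so that $n^{-1}\sum_{i\le i_\star}i^{k}e^{(2\gamma-\gamma_0)i}$ is controlled by a constant multiple of its last term; for $i>i_\star$ use $\lambda/(\lambda+\mu_i)\le1$ and the tail bound $\sum_{i>i_\star}e^{-\gamma_0 i}i^{k}\lesssim i_\star^{k}e^{-\gamma_0 i_\star}$. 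Both pieces are $\lesssim(\log n)^{k}n^{-\gamma_0/(2\gamma)}+(\log n)^{k+1}n^{-1}$, which, since $\gamma_0>\gamma$ forces $\gamma_0/(2\gamma)>1/2$, is $o\big((\log n)^{k+1}/\sqrt n\big)$; the residual design error $\bar f_n^{(k)}-D^k(L_{K_\gamma}+\lambda)^{-1}L_{K_\gamma}f_0$ is of the same or smaller order by the same operator-concentration estimates together with Assumption (B) and $\hat\kappa_{\gamma,k+1}^2=O(n)$.

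The noise term is the crux. The naive conditional-variance bound $\mathrm{Var}\big(\hat f_n^{(k)}(x)\mid X\big)\le\sigma^2K_{kk}(x,x)/(n\lambda)$ is useless here because $n\lambda\asymp1$; instead one invokes the equivalent kernel to show that, with high probability, $\hat f_n^{(k)}$ is uniformly close to the $k$-th derivative of the equivalent-kernel smoother $x\mapsto n^{-1}\sum_i\tilde K_\gamma(x,X_i)Y_i$, whose noise part at $x$ has conditional variance $n^{-2}\sigma^2\sum_i(\partial_x^k\tilde K_\gamma(x,X_i))^2\approx n^{-1}\sigma^2\sum_i\nu_i^2\psi_i^{(k)}(x)^2\le n^{-1}\sigma^2\,\partial_x^k\partial_{x'}^k\tilde K_\gamma(x,x')\big|_{x'=x}\le\sigma^2\tilde\kappa_{\gamma,k}^2/n$ — precisely the estimate behind Lemma~\ref{thm:equivalent.sigma.deriv}. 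Because this bound is uniform in $x$ and the noise process is Gaussian conditionally on $X$, a Borell--TIS/Gaussian-concentration argument (equivalently, bounding the dominating $L_\infty$ norm over a $1/n$-grid, using Assumption (B) and $\hat\kappa_{\gamma,k+1}^2\asymp(\log n)^{2k+3}=O(n)$ to control the modulus of continuity) upgrades the $O(\sigma^2\tilde\kappa_{\gamma,k}^2/n)$ mean-square bound to a high-probability one at the cost of a single factor $\log n$: $\|\text{noise}\|_2\lesssim\sqrt{\tilde\kappa_{\gamma,k}^2\log n/n}$. By Lemma~\ref{lem:differentiability.exp}, $\tilde\kappa_{\gamma,k}^2\asymp(-\log\lambda)^{2k+1}\asymp(\log n)^{2k+1}$ with $\lambda\asymp1/n$, so $\|\text{noise}\|_2\lesssim(\log n)^{k+1}/\sqrt n$.

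Intersecting the two high-probability events and combining the (dominant) noise bound with the smaller bias bound yields $\|\hat f_n^{(k)}-f_0^{(k)}\|_2\lesssim(\log n)^{k+1}/\sqrt n$ with $\PP_0^{(n)}$-probability at least $1-n^{-10}$. The main obstacle I anticipate is the noise term: one must (i) recognize that the correct variance proxy for the $k$-th derivative is the $k$-th order effective dimension $\tilde\kappa_{\gamma,k}^2$ rather than the crude $K_{kk}/(n\lambda)$, which forces the equivalent-kernel comparison and a careful accounting of the self-regularization hidden in $[K(X,X)+n\lambda I_n]^{-1}$; and (ii) carry out the Gaussian-process tail/net argument so that only one extra $\log n$ is incurred, which is exactly what produces the exponent $k+1$ in the stated rate. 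The companion design-error estimate (empirical vs.\ population operator, now weighted by $D^k$) is more routine but still needs Assumptions (A)--(B) and the $\hat\kappa_{\gamma,k+1}^2=O(n)$ bound of Lemma~\ref{lem:differentiability.exp}.
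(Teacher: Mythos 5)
Your strategy is sound and lands on the correct rate, but it takes a recognizably different route from the paper's. You split $\hat f_n^{(k)}-f_0^{(k)}$ into a noiseless-design bias and a noise term, and propose to control the noise by establishing the pointwise variance proxy $\sigma^2\tilde\kappa_{\gamma,k}^2/n$ through the equivalent kernel and then running a Gaussian chaining/net argument to upgrade it to a uniform high-probability bound at the cost of one $\sqrt{\log n}$. The paper instead decomposes around the population approximant $f_\lambda=(L_{K_\gamma}+\lambda I)^{-1}L_{K_\gamma}f_0$, imports a single high-probability bound on $\|\hat f_n-f_\lambda\|_{\tilde{\bbH}_\gamma}$ (covering noise and design error together, with the $\sqrt{\log n}$ already built in), and then applies the norm-comparison Lemma~\ref{lem:exp.RKHS.derivative}, $\|f^{(k)}\|_2\leq C\tilde\kappa_\gamma^{-1}\tilde\kappa_{\gamma,k}\|f\|_{\tilde{\bbH}_\gamma}$, to pass to the $k$-th derivative in $L_2$; the deterministic bias $\|f_\lambda^{(k)}-f_0^{(k)}\|_\infty\lesssim\sqrt\lambda$ is Lemma~\ref{lem:exp.deriv.deterministic}, whose Fourier computation (using $|f_{0,i}|\lesssim e^{-\gamma_0 i}$, $\mu_i\asymp e^{-2\gamma i}$ and $\gamma_0>\gamma$) is essentially your crossover-split argument in a slicker form. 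What the paper's route buys is that it entirely avoids the supremum/chaining step you flag as the crux: the derivative order enters only through the deterministic ratio $\tilde\kappa_{\gamma,k}/\tilde\kappa_\gamma$, so no modulus-of-continuity or net argument for the differentiated noise process is needed, and the $(\log n)^{k+1}$ exponent drops out of $\tilde\kappa_{\gamma,k}\sqrt{\log n/n}$ directly. What your route buys is a more transparent accounting of where the variance proxy $\tilde\kappa_{\gamma,k}^2/n$ comes from, at the price of having to execute the chaining step rigorously (which is genuinely the delicate part, and is the same machinery the paper reserves for the posterior-variance bound in Lemmas~\ref{thm:equivalent.sigma.deriv} and \ref{lem:entropy.integral.deriv} rather than for this lemma). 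One further harmless discrepancy: you take $\lambda\asymp 1/n$ as in the statement, while the paper's proof actually sets $\lambda\asymp\log n/n$; both choices satisfy the required conditions and yield the same rate $(\log n)^{k+1}/\sqrt n$.
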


\begin{lemma}\label{thm:minimax.diff.matern}
Suppose $f_0 \in W^\alpha[0,1]$ or $f_0\in H^{\alpha}[0,1]$ for $\alpha>k+1/2$ and $k\in\mathbb{N}_0$. If $K_\alpha$ is used in the GP prior, then it holds with $\PP_0^{(n)}$-probability at least $1-n^{-10}$ that
$
\|\hat{f}_n^{(k)}-f_0^{(k)}\|_2\lesssim \left({\log n}/{n}\right)^{\frac{\alpha-k}{2\alpha+1}},
$
with the corresponding choice of regularization parameter $\lambda\asymp ({\log n}/{n})^{\frac{2\alpha}{2\alpha+1}}$.
\end{lemma}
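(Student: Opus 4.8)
Since the posterior mean $\hat f_n$ in \eqref{eq:f.hat.deriv} is exactly the kernel ridge regression estimator with penalty $n\lambda$, I would bound $\|\hat f_n^{(k)}-f_0^{(k)}\|_2$ by a bias--variance split combined with the equivalent-kernel device, and then specialise to $\mu_i\asymp i^{-2\alpha}$ and the Fourier basis $\{\psi_i\}$. Writing $Y=f_{0,n}+\varepsilon$ with $f_{0,n}=(f_0(X_1),\dots,f_0(X_n))^T$ and $\varepsilon=(\varepsilon_1,\dots,\varepsilon_n)^T$, and $M=[K(X,X)+n\lambda I_n]^{-1}$, decompose
\[
\hat f_n^{(k)}(x)-f_0^{(k)}(x)=\underbrace{K_{k0}(x,X)Mf_{0,n}-f_0^{(k)}(x)}_{b_n^{(k)}(x)}+\underbrace{K_{k0}(x,X)M\varepsilon}_{s_n^{(k)}(x)}.
\]
Conditionally on $X$, $s_n^{(k)}$ is a centered Gaussian process, and a computation parallel to the proof of Lemma~\ref{thm:equivalent.sigma.deriv} — using the equivalent-kernel approximation of $K(X,X)/n$ by $\mathrm{diag}(\mu_i)$, legitimate because $\tilde\kappa^2=o(\sqrt{n/\log n})$ — yields $\E[\|s_n^{(k)}\|_2^2\mid X]\lesssim\sigma^2\tilde\kappa_k^2/n$. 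Because $\|s_n^{(k)}\|_2^2=\varepsilon^T A\varepsilon$ is a Gaussian quadratic form, a $\chi^2$/Hanson--Wright tail bound at level $\asymp\log n$ then gives $\|s_n^{(k)}\|_2^2\lesssim\sigma^2\tilde\kappa_k^2\log n/n$ with $\PP_0^{(n)}$-probability at least $1-n^{-10}$; the lone logarithmic factor is the price of this large-deviation bound.

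For the deterministic term I would pass to the equivalent kernel $\tilde K$: a standard perturbation argument (controlling the gap between the random Gram matrix $K(X,X)/n$ and the population operator $\mathrm{diag}(\mu_i)$, and transferring it through the differentiated feature map $(\sqrt{\mu_i}\phi_i^{(k)})_i$, which is where Assumption (A) — and a uniform-in-$x$ refinement via Assumption (B) — enters) shows $\|b_n^{(k)}\|_2\lesssim\|\tilde f^{(k)}-f_0^{(k)}\|_2+(\text{lower order})$, where $\tilde f=\sum_i\frac{\mu_i}{\mu_i+\lambda}f_i\phi_i$ is the equivalent-kernel smoother applied to $f_0$, so $\tilde f^{(k)}-f_0^{(k)}=-\sum_i\frac{\lambda}{\mu_i+\lambda}f_i\phi_i^{(k)}$. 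For the Fourier basis, $\psi_i^{(k)}$ is $\pm(2\pi\lfloor i/2\rfloor)^k$ times another basis element, so $\{\psi_i^{(k)}/\|\psi_i^{(k)}\|_2\}$ is again orthonormal with $\|\psi_i^{(k)}\|_2\asymp i^k$; inserting $\mu_i\asymp i^{-2\alpha}$,
\[
\|\tilde f^{(k)}-f_0^{(k)}\|_2^2\;\asymp\;\sum_{i\ge1}\Bigl(\frac{\lambda}{\mu_i+\lambda}\Bigr)^2 i^{2k}f_i^2\;\asymp\;\sum_{i\ge1} g(i)\,a_i^2,\qquad a_i:=i^\alpha|f_i|,\quad g(i):=\frac{\lambda^2 i^{2k+2\alpha}}{(1+\lambda i^{2\alpha})^2}.
\]
The key is that the $H^\alpha[0,1]$-norm is the weighted $\ell^1$ quantity $\sum_i a_i=\|f_0\|_{H^\alpha[0,1]}^2=:R$, hence $\sup_i a_i\le R$ and $\sum_i g(i)a_i^2\le R\sum_i g(i)a_i\le R^2\sup_i g(i)$; since $k<\alpha$, elementary calculus gives $\sup_i g(i)\asymp\lambda^{(\alpha-k)/\alpha}$ (attained near $i\asymp\lambda^{-1/(2\alpha)}$), so $\|b_n^{(k)}\|_2\lesssim\lambda^{(\alpha-k)/(2\alpha)}$.

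Combining with Lemma~\ref{lem:differentiability.matern}, which (taking $m=k$, valid since $\alpha>k+1/2$) gives $\tilde\kappa_k^2\asymp\lambda^{-(2k+1)/(2\alpha)}$, I obtain
\[
\|\hat f_n^{(k)}-f_0^{(k)}\|_2\;\lesssim\;\lambda^{\frac{\alpha-k}{2\alpha}}+\Bigl(\frac{\log n}{n}\,\lambda^{-\frac{2k+1}{2\alpha}}\Bigr)^{1/2}
\]
with $\PP_0^{(n)}$-probability at least $1-n^{-10}$; balancing the two terms forces $\lambda^{(2\alpha+1)/(2\alpha)}\asymp\log n/n$, i.e. $\lambda\asymp(\log n/n)^{2\alpha/(2\alpha+1)}$, for which both terms are $\asymp(\log n/n)^{(\alpha-k)/(2\alpha+1)}$, as claimed; one also checks this $\lambda$ satisfies the requirement $\tilde\kappa^2\asymp\lambda^{-1/(2\alpha)}=o(\sqrt{n/\log n})$ used above, which holds because $\alpha>1/2$. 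The step I expect to be the main obstacle is the deterministic one: making the equivalent-kernel approximation rigorous \emph{after $k$-fold differentiation} — that is, transferring concentration of the empirical integral operator to the unbounded output map $(\sqrt{\mu_i}\phi_i^{(k)})_i$ and thereby controlling $\|b_n^{(k)}-(\tilde f^{(k)}-f_0^{(k)})\|_2$ (and, in the variance term, the gap between $M^2$ and its population analogue) — whereas the eigenvalue bias sum and the $\chi^2$ tail bound are then routine.
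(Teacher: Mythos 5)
Your overall strategy is sound and every rate you compute is correct: the bias bound $\lambda^{(\alpha-k)/(2\alpha)}$ via $\sup_i g(i)\asymp\lambda^{(\alpha-k)/\alpha}$ exploiting the weighted-$\ell^1$ structure of $\|\cdot\|_{H^\alpha}$, the variance bound $\tilde\kappa_k^2\log n/n$ with $\tilde\kappa_k^2\asymp\lambda^{-(2k+1)/(2\alpha)}$ from Lemma~\ref{lem:differentiability.matern}, and the balancing that forces $\lambda\asymp(\log n/n)^{2\alpha/(2\alpha+1)}$ all match the target. The route, however, differs from the paper's. The paper's proof is a citation to Theorem 14 of \cite{liu2020non}, and the template is visible in the proof of the companion Lemma~\ref{thm:minimax.diff.exp}: one first establishes a high-probability bound on $\|\hat f_n-f_\lambda\|_{\tilde{\bbH}}$ (inequality \eqref{eq:KRR.thm2}), in which both the noise and the sampling variability are absorbed into a single operator-concentration statement \emph{before any differentiation occurs}, and only then passes to the $k$-th derivative via the deterministic norm-transfer inequality $\|f^{(k)}\|_2\lesssim\tilde\kappa^{-1}\tilde\kappa_k\|f\|_{\tilde{\bbH}}$ (the analogue of Lemma~\ref{lem:exp.RKHS.derivative}), finishing with the bias term $\|f_\lambda^{(k)}-f_0^{(k)}\|_2$ computed from the eigen-expansion. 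You instead split bias and noise at the level of the estimator and treat the noise term as a Gaussian quadratic form with a Hanson--Wright tail.

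The step you flag as the main obstacle — transferring concentration of the empirical operator through the differentiated feature map $(\sqrt{\mu_i}\phi_i^{(k)})_i$ so as to control $\|b_n^{(k)}-(f_\lambda^{(k)}-f_0^{(k)})\|_2$ and the gap between $M^2$ and its population analogue — is a genuine gap in your plan, and it is exactly the difficulty the paper's ordering of operations is designed to sidestep. Because the $\tilde{\bbH}$-norm dominates $\|(\cdot)^{(k)}\|_2$ up to the factor $\tilde\kappa^{-1}\tilde\kappa_k$, all the probabilistic work can be done once, in the undifferentiated RKHS, where the feature map is bounded by $\tilde\kappa$; differentiation then costs only a deterministic multiplicative constant. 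Carrying out your plan as stated would require redoing the operator perturbation analysis for the unbounded map $x\mapsto\partial_x^kK(x,\cdot)$, which is substantially more delicate (and is where Assumption (A) would have to be invoked quantitatively, not just qualitatively). So: correct rates, correct bias computation, a workable but harder variance argument, and one acknowledged hole that the paper's RKHS-first route closes more economically.
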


Finally, we present convergence rates of $\hat{f}_n^{(k)}$ using empirical Bayes estimators of $\lambda$ and oversmooth kernels, which verify Assumption (D) for deriving Theorem~\ref{thm:adaptive.exp} and Theorem~\ref{thm:adaptive.poly}. The proof hinges on Lemma~\ref{lem:approx.error}, which is dependent on the approximation function $f_\lambda$.

\begin{lemma}\label{lem:exp.mismatch}
Suppose $f_0\in \widetilde{A}^{\gamma_0}[0,1]$, the kernel is chosen to be $K_\gamma$ for $\gamma \geq \gamma_0 > 1/2$ and $k\in\mathbb{N}_0$, and $\lambda$ is estimated by the MMLE $\hat\lambda_n$. Then it holds with $\PP_0^{(n)}$-probability at least $1-n^{-10}$ that
$
\|\hat f_n^{(k)} - f_0^{(k)}\|_2 \lesssim {(\log n)^{k+1}}/{\sqrt{n}}.
$
\end{lemma}

\begin{lemma}\label{lem:poly.mismatch}
Suppose $f_0 \in \widetilde{W}^{\alpha_0}[0,1]$, the kernel is chosen to be $K_\alpha$ for $\alpha \geq \alpha_0 > k+1/2$, $\alpha > \frac{2\alpha_0+1}{4\alpha_0-2}$ and $k\in\mathbb{N}_0$, and $\lambda$ is estimated by the MMLE $\hat\lambda_n$. Then it holds with $\PP_0^{(n)}$-probability at least $1-n^{-10}$ that
$
\|\hat f_n^{(k)} - f_0^{(k)}\|_2 \lesssim n^{-\frac{\alpha_0-k}{2\alpha_0+1}} \sqrt{\log n}.
$
\end{lemma}

\section{Simulation}\label{sec:simulation}

We conduct a simulation study to assess the finite sample performance of the proposed plug-in procedure for function derivatives. The true function is $f_0(x)=\sqrt{2}\s i^{-4}\sin i\cos[(i-1/2)\pi x]$, $x\in[0,1]$, which has H\"older smoothness level $\alpha=3$. We simulate $n$ observations from the regression model $Y_i=f_0(X_i)+\varepsilon_i$ with $\varepsilon_i\sim N(0, 0.1)$ and $X_i\sim \text{Unif}[0,1]$. We consider three sample sizes: 100, 500, and 1000, and replicate the simulation 100 times. 

For Gaussian process priors, we consider three commonly used covariance kernels: the Mat\'ern, squared exponential (SE), and second-order Sobolev kernels, which are given by
\begin{align}
K_{\text{Mat},\nu}(x,x')&=\frac{2^{1-\nu}}{\Gamma(\nu)}\left(\sqrt{2\nu}|x-x'|\right)^\nu B_\nu\left(\sqrt{2\nu}|x-x'|\right),\\
K_{\text{SE}}(x,x')&=\exp(-(x-x')^2),\\
K_{\text{Sob}}(x,x')&=1+xx'+\min\{x,x'\}^2(3\max\{x, x'\}-\min\{x, x'\})/6.
\end{align}
Here $B_\nu(\cdot)$ is the modified Bessel function of the second kind with $\nu$ being the smoothness parameter to be determined. For the Mat\'ern kernel, it is well known that the eigenvalues of $K_{\text{Mat},\nu}$ decay at a polynomial rate, that is,  $\mu_i\asymp i^{-2(\nu+1/2)}$ for $i\in\mathbb{N}$. 

We compare various Gaussian process priors with a random series prior using B-splines. B-splines are widely used in nonparametric regression \citep{james2009functional,wang2020functional}. In the context of estimating function derivatives, the B-spline prior with normal basis coefficients has been recently shown to enjoy the plug-in property (cf. Theorem 4.2 in \cite{yoo2016supremum}), which, to the best of our knowledge, constitutes the only Bayesian method in the existing literature with this property. The implementation of this B-spline prior follows \cite{yoo2016supremum}. In particular, for any $x\in[0,1]$, let $ b_{J,4}(x)=(B_{j,4}(x))_{j=1}^J$ be a B-spline of order $4$ and degrees of freedom $J$ with uniform knots. The prior on $f$ is given as $f(x)=b_{J,4}(x)^T \beta$ with each entry of $\beta$ following $N(0, \sigma^2)$ independently. The unknown variance $\sigma^2$ is estimated by its MMLE $\hat{\sigma}_n^2=n^{-1}Y^T(B B^T+I_n)^{-1}Y$, where $B=(b_{J,4}(X_1),\ldots,b_{J,4}(X_n))^T$. The number of interior knots $N=J-4$ is selected from $\{1, 2, \ldots, 10\}$ using leave-one-out cross-validation. 

For Gaussian process priors, we adopt the same strategy of leave-one-out cross-validation to select the degree of freedom $\nu$ in $K_{\text{Mat},\nu}$ for a fair comparison. In particular, we consider $\nu$ from the set \{3, 3.5, 4, 4.5, \ldots, 10\}. The regularization parameter $\lambda$ and unknown $\sigma^2$ are determined by empirical Bayes through maximizing the marginal likelihood. In addition to the estimation using each kernel, we also automatically select the best kernel among the three via leave-one-out cross-validation.

For each method, we evaluate the posterior mean $\hat{f}_n$ and $\hat{f}_n'$ at 100 equally spaced points in $[0,1]$, and calculate the root mean square error (RMSE) between the estimates and the true functions:
$
\mathrm{RMSE} = \sqrt{\sum_{t = 0}^{99} \{\hat{s}(t/99) - s(t /99)\}^2/100}, 
$
where $\hat{s}$ is the estimated function ($\hat{f}_n$ or $\hat{f}_n'$) and $s$ is the true function ($f_0$ or $f_0'$). 

Table~\ref{table:RMSE} reports the average RMSE of all methods over 100 repetitions for $f_0$ and $f_0'$. Clearly, the RMSE of all methods steadily decreases as the sample size $n$ increases. The squared exponential kernel and Mat\'ern kernel are the two leading approaches for all sample sizes and for both $f_0$ and $f_0'$. While the difference between various methods for $f_0$ tends to vanish when the sample size increases to $n = 1000$, the performance gap in estimating $f_0'$ is more profound. In particular, compared to the squared exponential kernel, the B-spline method increases the average RMSE more than twofold from 0.13 to 0.33. We notice considerably large RMSEs for B-splines in a proportion of simulations, so we also calculate the median RMSEs for better robustness, which are 0.075, 0.032, 0.024, 0.51, 0.31, and 0.24 for the six scenarios. This improves the summarized RMSEs to close to the Sobolev kernel for both $f_0$ and $f_0'$ when $n = 1000$. RMSEs of GP priors with the selected kernel via cross-validation do not significantly deviate from the best-performing kernel relative to the standard errors, suggesting that it can be a useful strategy to choose kernels if desired.

\setlength{\tabcolsep}{4pt}
\begin{table}
\centering
\caption{RMSE of estimating $f_0$ and $f_0'$, averaged over 100 repetitions. The first four rows are the plug-in GP prior with various kernels (Mat\'ern kernel, squared exponential kernel, second-order Sobolev kernel, and the selected kernel via cross-validation). The last row is the random series prior using B-splines. Standard deviations are provided in parentheses.
\vspace{0.2in}
\label{table:RMSE}} 
\resizebox{\textwidth}{!}{
\begin{tabular}{ccccccc}
\toprule
\multicolumn{1}{l}{}            & \multicolumn{3}{c}{$f_0$}                    & \multicolumn{3}{c}{$f_0'$}              \\ \cmidrule(lr){2-4} \cmidrule(lr){5-7}
& $n = 100$           & 500           & 1000          & $n = 100$        & 500         & 1000        \\ \cmidrule{1-1} \cmidrule(lr){2-4} \cmidrule(lr){5-7}
Mat\'ern          & 0.048 (0.020) & 0.024 (0.0075) & 0.019 (0.0060) & 0.24 (0.080) & 0.17 (0.034) & 0.16 (0.030) \\
SE                               & 0.048 (0.020) & 0.022 (0.0075) & 0.017 (0.0057) & 0.22 (0.087) & 0.14 (0.031) & 0.13 (0.024) \\
Sobolev                          & 0.050 (0.018) & 0.031 (0.0061) & 0.028 (0.0049) & 0.31 (0.051) & 0.28 (0.019) & 0.27 (0.016) \\
CV                          & 0.050 (0.018) & 0.025 (0.0079) & 0.019 (0.0062) & 0.28 (0.084) & 0.17 (0.056) & 0.15 (0.042) \\
B-splines          & 0.075 (0.029) &   0.034 (0.013)  &   0.024 (0.0085)         & 0.57 (0.36) &   0.49 (0.58) &  0.33 (0.28)        \\
\bottomrule
\end{tabular}
}
\end{table}

We next choose two representative examples from the 100 repetitions to visualize the estimates of $f_0$ and $f_0'$ in Figure~\ref{fig}. The dotted line stands for the posterior mean $\hat{f}_n$ and dashed lines for the 95\% simultaneous $L_\infty$ credible bands, where the radius is estimated by the 95\% quantile of the posterior samples of $\|f-\hat f_n\|_\infty$ and $\|f'-\hat f_n'\|_\infty$. For the B-spline prior, we use the default setting as in \cite{yoo2016supremum} by specifying the inflation factor $\rho=0.5$. Note that these credible bands have fixed widths by construction.

\newcommand{\scale}{0.22} 
\begin{figure}[ht]
\centering 
\begin{tabular}{c c c @{\hskip0.01pt} c @{\hskip0.01pt} c @{\hskip0.01pt} c}
& &\ \ Mat\'ern
& \ SE & \ Sobolev & \ B-splines \\ 
\begin{sideways}
\rule[0pt]{-0.3in}{0pt} Example 1
\end{sideways} \quad &

\begin{sideways}
\rule[0pt]{0.45in}{0pt} $f_0$
\end{sideways} &

{\includegraphics[width = \scale\linewidth]{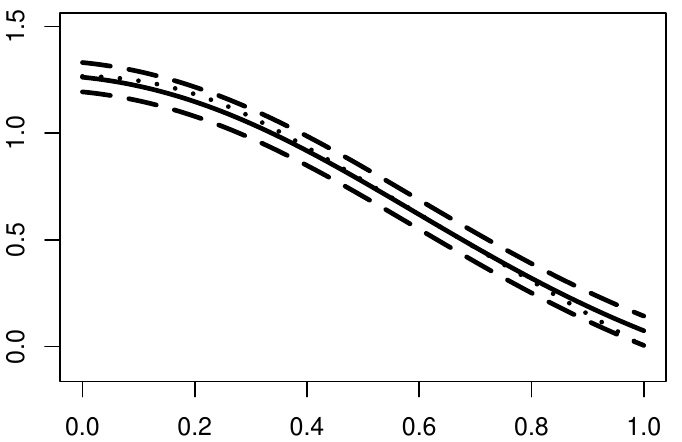}}  & 
{\includegraphics[width = \scale\linewidth]{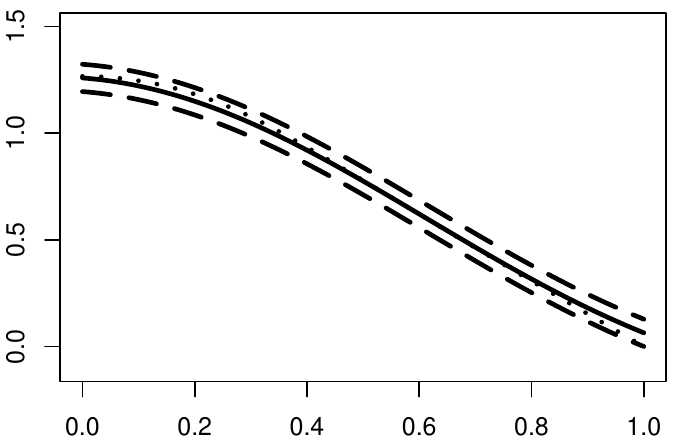}} &
{\includegraphics[width = \scale\linewidth]{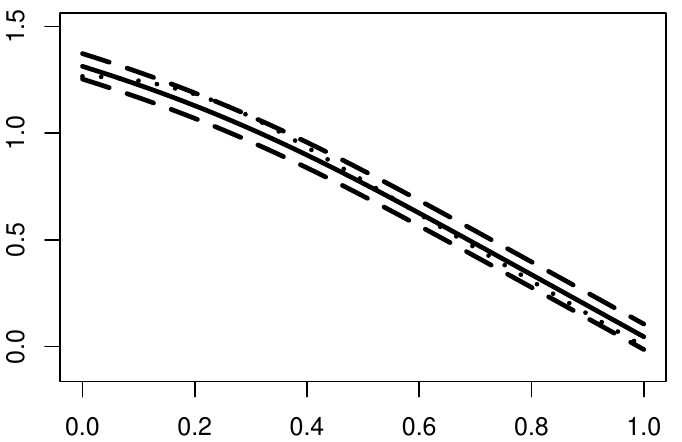}} &
{\includegraphics[width = \scale\linewidth]{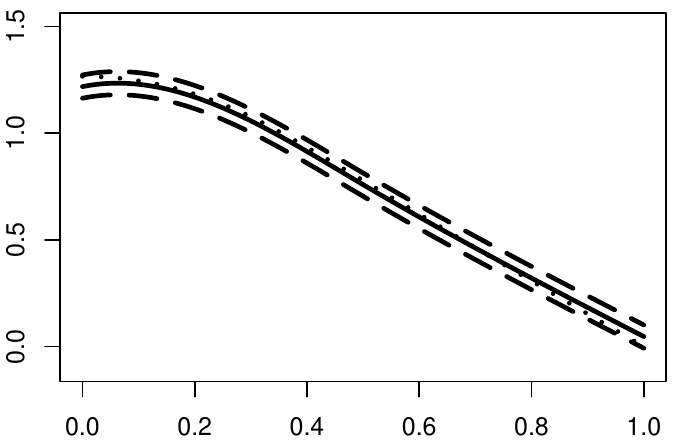}} \\

\vspace{0.2in}

&
\begin{sideways}
\rule[0pt]{0.45in}{0pt} $f_0'$
\end{sideways} &
\includegraphics[width = \scale\linewidth]{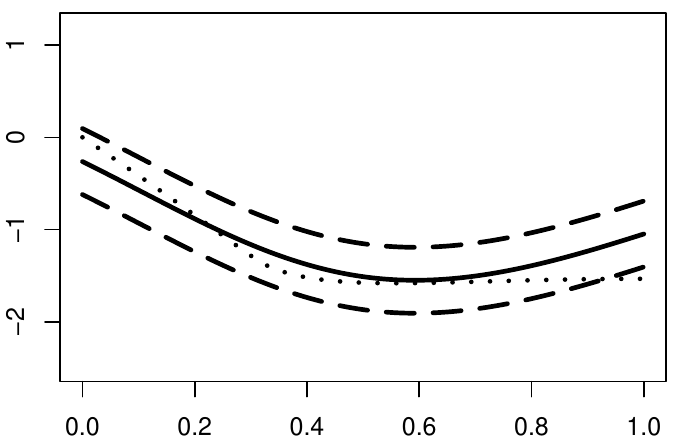} &
\includegraphics[width = \scale\linewidth]{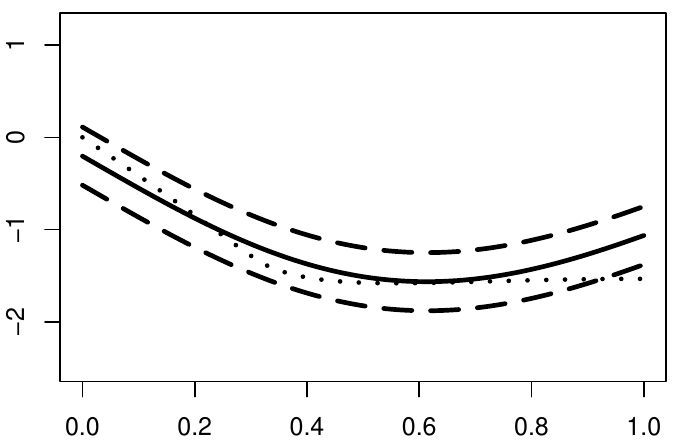} &
\includegraphics[width = \scale\linewidth]{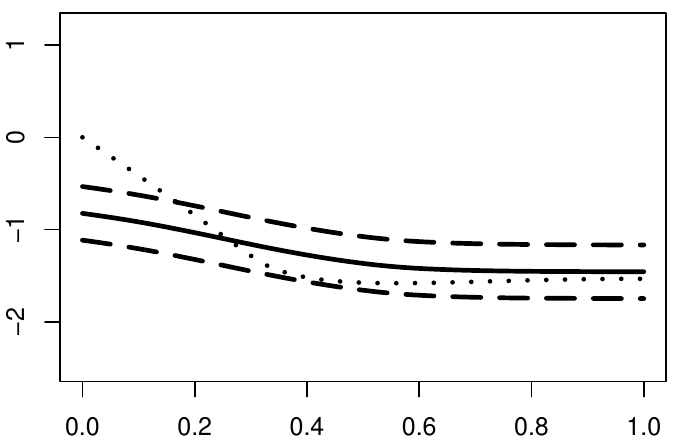} &
\includegraphics[width = \scale\linewidth]{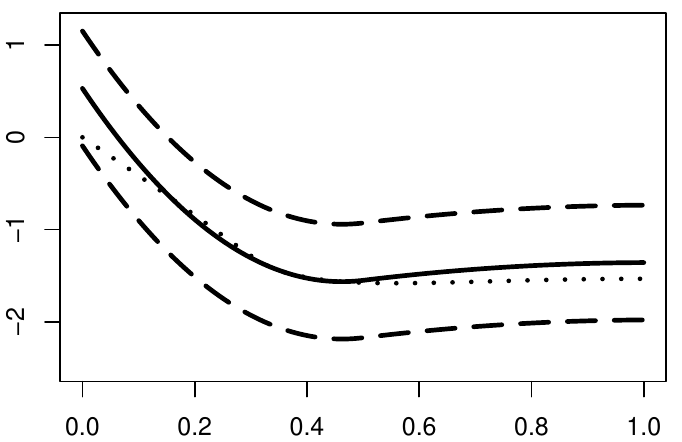} \\

\begin{sideways}
\rule[0pt]{-0.3in}{0pt} Example 2
\end{sideways} &
\begin{sideways}
\rule[0pt]{0.45in}{0pt} $f_0$
\end{sideways} &
\includegraphics[width = \scale\linewidth]{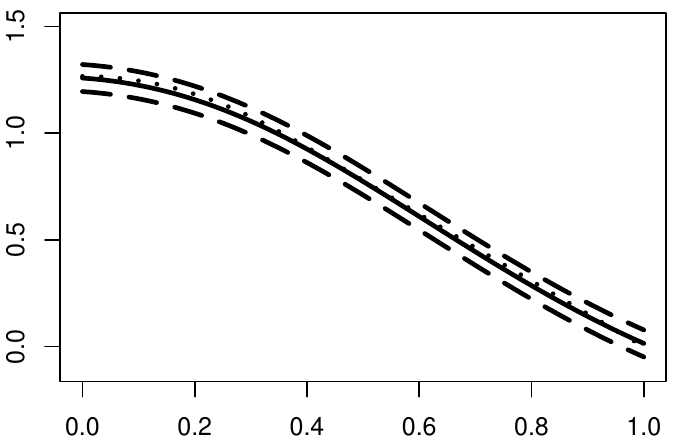} &
\includegraphics[width = \scale\linewidth]{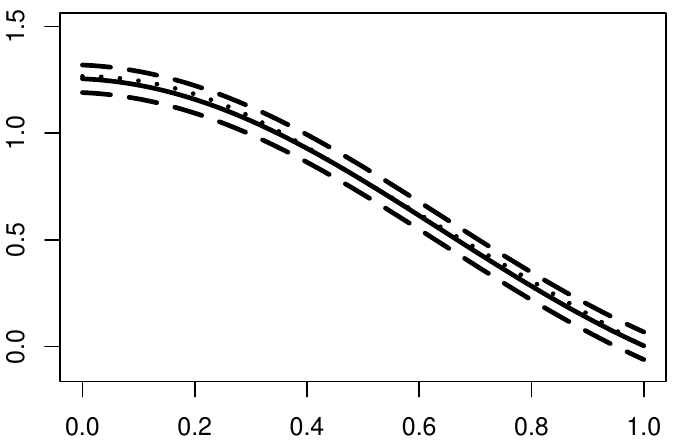} &
\includegraphics[width = \scale\linewidth]{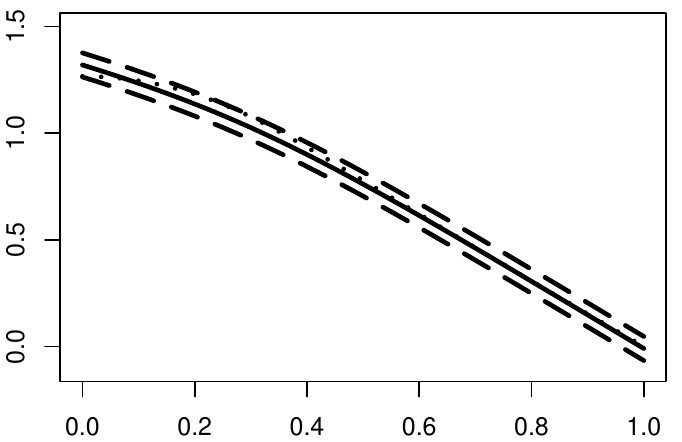} &
\includegraphics[width = \scale\linewidth]{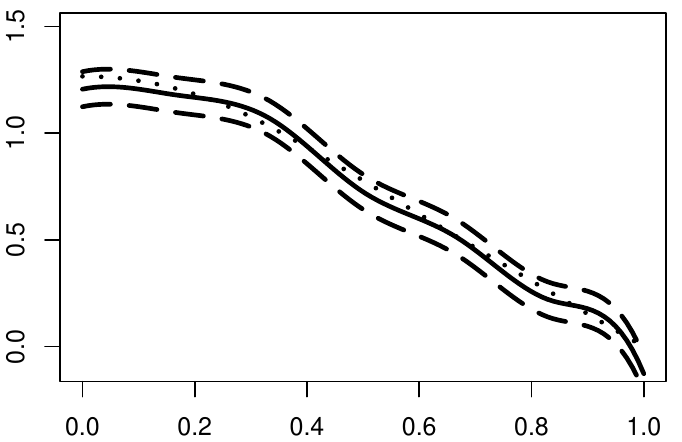} \\

&
\begin{sideways}
\rule[0pt]{0.45in}{0pt} $f_0'$
\end{sideways} &
\includegraphics[width = \scale\linewidth]{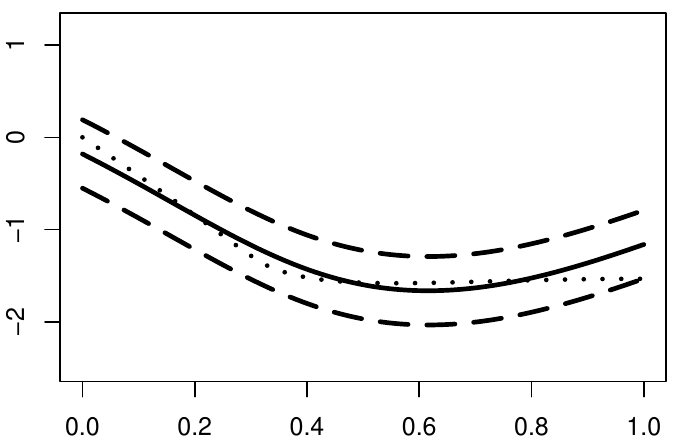} &
\includegraphics[width = \scale\linewidth]{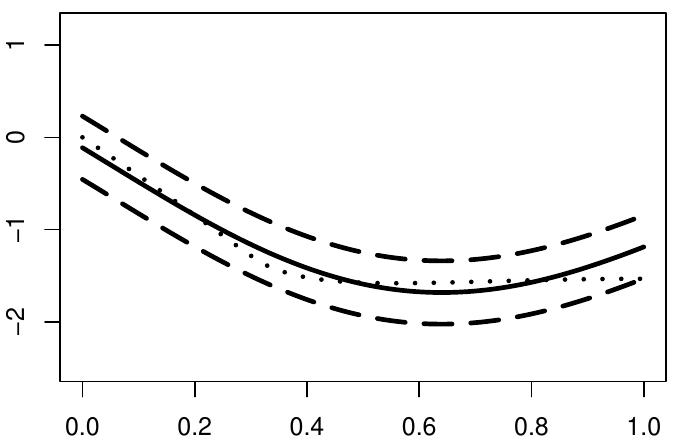} &
\includegraphics[width = \scale\linewidth]{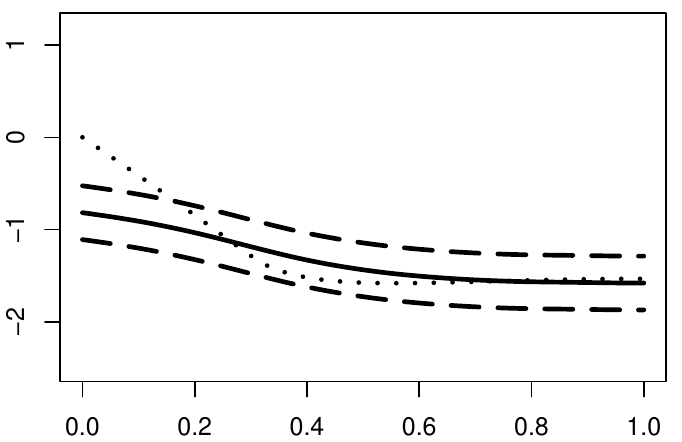} &
\includegraphics[width = \scale\linewidth]{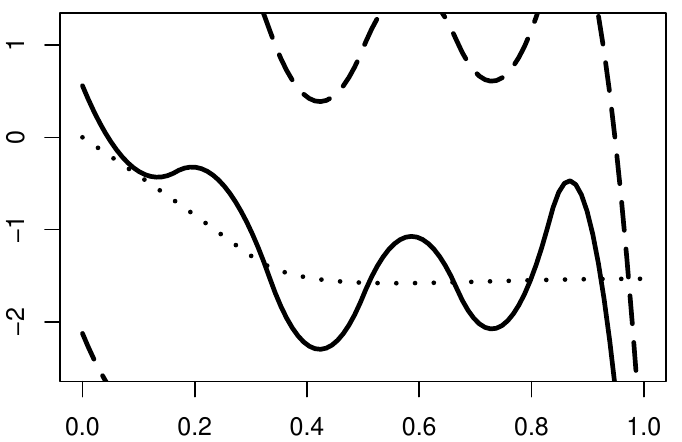} \\
\end{tabular} 
\caption{Visualization of estimates of $f_0$ and $f_0'$ in two examples with $n=1000$. Solid: posterior mean $\hat{f}_n$; Dots: true function $f_0$ or $f_0'$; Dashes: 95\% $L_\infty$-credible bands.}
\label{fig}
\end{figure}

The first and third rows of Figure~\ref{fig} show that the four methods lead to comparable estimation and uncertainty quantification when estimating $f_0$. However, the deviation between different methods is considerably widened for the estimation of $f_0'$. Mat\'{e}rn and squared exponential kernels constantly give the most accurate point estimation, and their credible bands cover the ground truth with reasonable width, indicating the effectiveness of the nonparametric plug-in procedure using GP priors. There is a tendency for the second-order Sobolev kernel to fail to capture $f_0'$ around the left endpoint. The performance of the B-spline method continues to exhibit sensitivity to the choice of $N$, selected by leave-one-out cross-validation. In Example 1, the selected $N$ is 1, and the B-spline prior yields comparable credible bands of $f_0'$ than GP priors with slightly altered estimation near zero; in Example 2 shown in the fourth row, the B-spline method with $N=5$ gives a point estimate that is substantially worse than the other three GP methods, and the associated credible bands are off the chart. We remark that the performance of B-splines might have been substantially improved had the number of knots been selected by a different tuning method or with a different simulation setting. While leave-one-out cross-validation may be appropriate for selecting $N$ when estimating $f_0$, as observed in~\cite{yoo2016supremum}, our results suggest that adjustments or alternative strategies seem to be needed when the objective is to make inference on $f_0'$. For GP priors, our numerical results suggest choosing $\lambda$ using empirical Bayes seems to be a reasonable strategy for both $f_0$ and $f_0'$, which is in line with the nonparametric plug-in property of GP priors.

In another simulation reported in Appendix~\ref{sec:simulation1}, we compare our plug-in GP estimators with the inverse method proposed in \cite{holsclaw2013gaussian}, where the authors perceived the plug-in estimator as suboptimal. We consider the regression function $f_0 (x) = x \sin(x)/10$ given by one simulated example in \cite{holsclaw2013gaussian}; the implementation of the inverse method follows the authors' setup and uses their published code online. We generate $n = 100$ and 500 data points on a regular grid in $[0, 10]$, and generate noise $\varepsilon_i\sim N(0, \sigma^2)$ with $\sigma = 0.1$, 0.2 and 0.3. The goal is to estimate $f_0'$. We have found that the inverse method has a noticeable frequency to produce a zero estimate, and the estimates have extremely high variability across replications, partly due to complicated and poor hyperparameter tuning. We compare its performance with and without these simulations in which the inverse method is not stable. The proposed plug-in GP method continues to give competitive performance, and we have found no scenario in the considered settings with varying sample size and noise standard deviation where the inverse method has significantly smaller RMSE relative to standard errors. In contrast, plug-in GPs are significantly better when the sample size is $n = 500$ and noise standard deviationn $\sigma = 0.1$. We also note that the inverse method is not only computationally intensive but also restrictive to one particular derivative order, and its generalization to other derivative orders is nontrivial. The proposed plug-in GP is instead computationally efficient and applicable for general derivative orders. 

In addition to the empirical Bayes approach to choose $\sigma^2$ and $\lambda$, in Appendix~\ref{sec:simulation1}, we also implement a fully Bayesian alternative where inverse Gamma and Gamma priors are assigned to $\sigma^2$ and $\lambda$, respectively. We do not observe any significant difference between the results of the two treatments under the considered simulation setting.

\section{Real data application}

We apply the proposed plug-in GP method to analyze the rate of global sea rise using global mean sea-level (GMSL) records from coastal and island tide-gauge measurements from the late 19th to early 21st century. This tide-gauge record dataset has been described and previously analyzed in \cite{church200620th,church2011sea}. Our analysis uses three variables: time ($x$) in years AD, GMSL ($y$) in millimeters, and one-sigma sea-level observational error ($\sigma_y$) in millimeters. The sample size is $n = 130$. We are interested in estimating the rate of GMSL rise, denoted by $f'$, with $f$ being the regression function of $y$ on $x$. 

As a descriptive summary, the total GMSL rise from January 1880 to December 2009 is about 210 mm over the 130 years. The least-squares linear trend of sea-level rise from 1900 AD to 2009 AD is 1.7 mm/yr $\pm$ 0.3 mm/yr \citep{church200620th}. Still using linear regression, \cite{church2011sea} reduced the uncertainty and obtained an estimate of 1.7 mm/yr $\pm$ 0.2 mm/yr for the same period. Nonparametric inference on $f'(x)$ as an unknown function allows us to depict flexible rate changes that are possibly time-varying. 

We assume a nonparametric regression model for the observed data $y_i = f(x_i) + \varepsilon_i$, for $i = 1880, 1881, \ldots, 2009$. We consider two error structures: (i) a homogeneous error structure with $\varepsilon_i \sim N(0, \sigma^2)$, and (ii) a heterogeneous structure incorporating the observational error $\sigma^2_{y_i}$, that is, $\varepsilon_i \sim N(0, \sigma^2_{y_i}+\sigma^2)$. Our proposed method can be applied to these two models straightforwardly. We assign a Gaussian process prior $f \sim \GP(0, \sigma^2 (n \lambda)^{-1} K)$, and use leave-one-out cross-validation to choose the kernel function from the Mat\'ern kernel, squared exponential kernel, and Sobolev kernel. For both error structures, the Sobolev kernel is selected. We optimize $\sigma^2$ and $\lambda$ jointly by maximizing the marginal likelihood function, noting that for model (i) the closed-form expression for $\sigma^2$ is available as discussed in Section~\ref{sec:MMLE}. Then the plug-in posterior distribution of $f'$ is analytically tractable and would require no further tuning or posterior sampling, based on which one can make inference on the rate of global sea-level rise.

For both error structures, Figure~\ref{fig:GMSL} presents the rate estimation using the posterior mean function along with 90\% pointwise and simultaneous credible bands for uncertainty quantification. Model (a) reveals that the rate of GMSL rise is not a constant over time, and instead has accelerated much from 1.22 mm/yr in 1880 AD to 1.81 mm/yr in 2010 AD. Model (b) shows a similar acceleration pattern but with a narrower overall range and credible band, suggesting reduced uncertainty by incorporating the observational variance $\sigma_{y_i}^2$ in the model. Our findings of the nonlinear, increasing rate of global sea-level rise have also been observed in earlier literature, such as \cite{cahill2015modeling}, based on integrated Gaussian processes, which necessitate complex parameter tuning and computationally intensive posterior sampling. Our uncertainty quantification indicates that the rate of 1.7 mm/yr in a least-squares line roughly falls into the 90\% simultaneous credible intervals from approximately 1955 AD to 1985 AD. Overall, our analysis captures the acceleration in the rate of global sea rise that would otherwise be missed by least-squares fitting, indicates the least-squares slope of 1.7mm/yr may be only representative for a limited time period, and provides a time-varying description of global sea-level rise for the past 130 years.

\begin{figure}[H]
\centering
\subfigure[$\varepsilon_i\sim N(0,\sigma^2)$.]{
\includegraphics[width=0.475\textwidth]{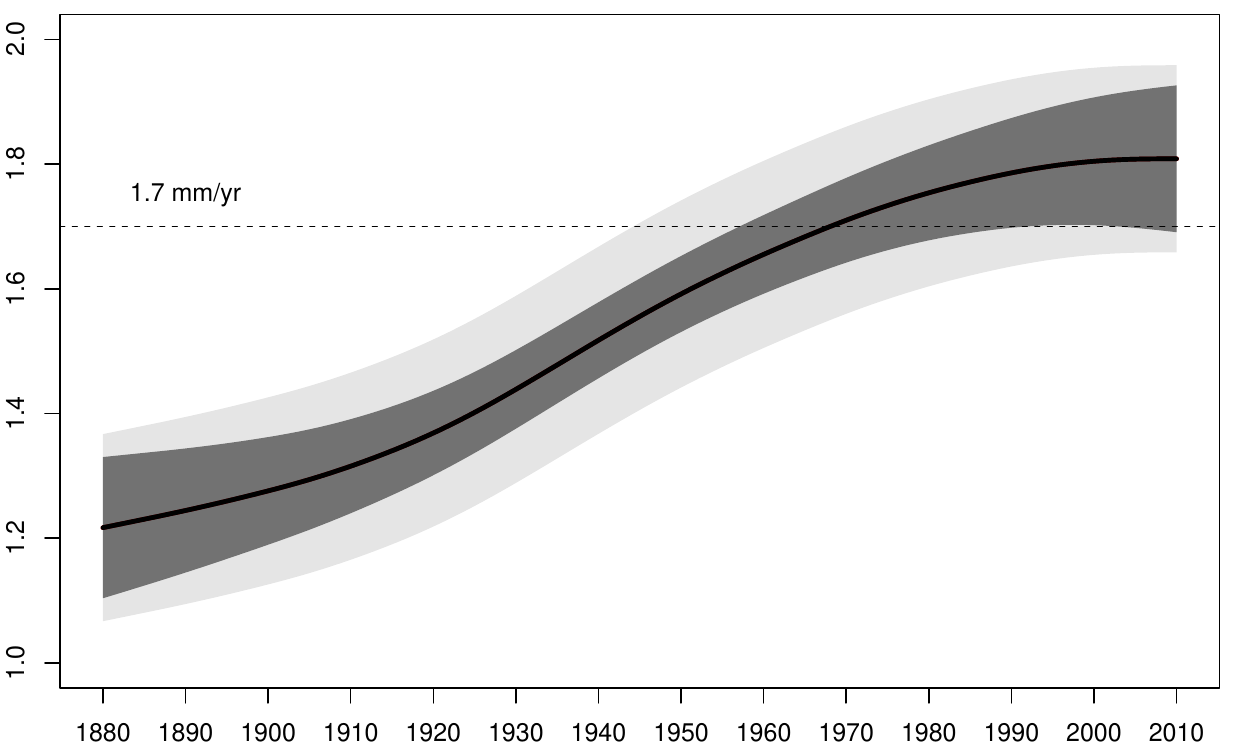}
} 
\subfigure[$\varepsilon_i\sim N(0,\sigma^2_{y_i}+\sigma^2)$.]{
\includegraphics[width=0.475\textwidth]{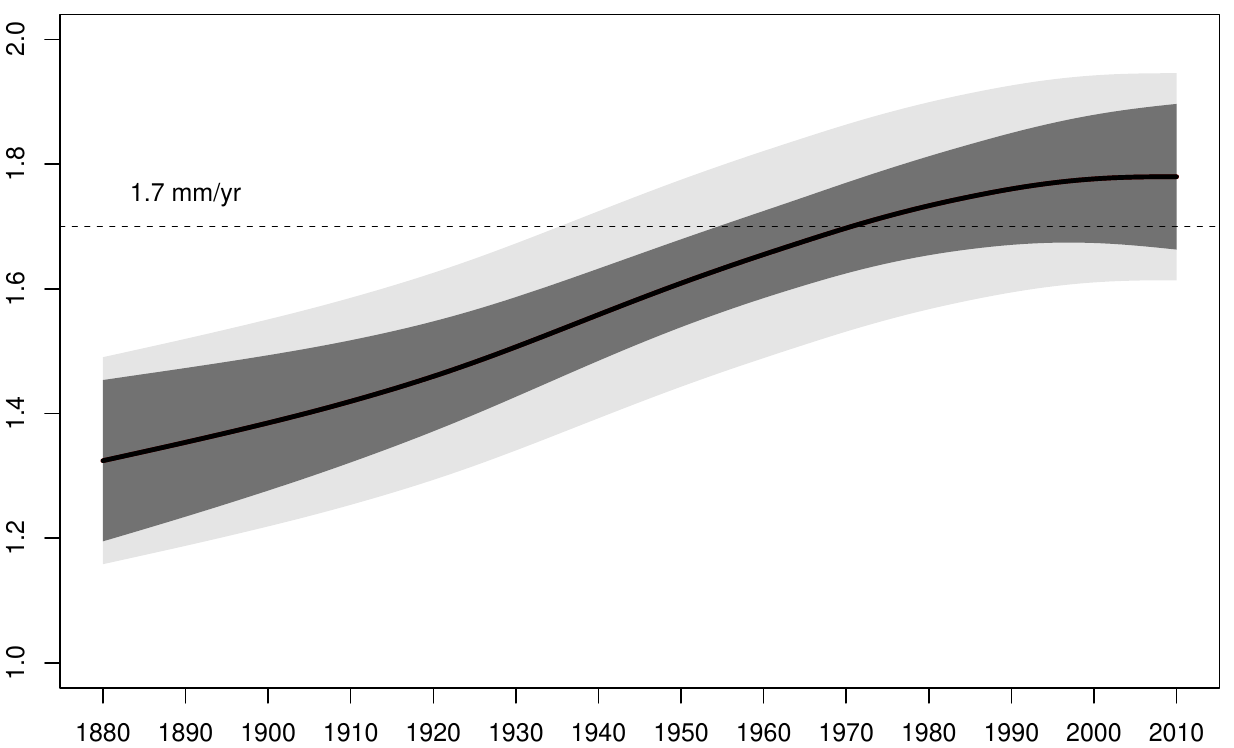}
} 
\caption{Rate of global sea-level rise calculated ($f'$) under each error structure. Shading denotes 90\% pointwise (dark) and simultaneous credible bands (light) for the rate process. The reference rate of 1.7 mm/yr is the least-squares slope in a linear model assuming a constant rate of sea-level rise.}
\label{fig:GMSL}
\end{figure}

\newpage

\begin{appendices}

\section{Proofs}\label{sec:proofs}

In this section, we present the proofs of all theoretical results in the main paper, auxiliary technical results, and their proofs.

\subsection{Proofs in Section~\ref{sec:main.results}}

\begin{proof}[Proof of Theorem~\ref{thm:deriv.contraction.l2}]
Consider the centered Gaussian process: $(f^{(k)}-\hat{f}^{(k)}_n)\mid \data\sim {\rm GP}(0, \tilde{V}_n^k)$. Let $\EE\|f^{(k)}-\hat{f}^{(k)}_n\|_\infty =: \EE(\|f^{(k)}-\hat{f}^{(k)}_n\|_\infty \mid \data)$ denote the conditional expectation with respect to the posterior distribution to ease notation. According to Borell-TIS inequality (cf. Proposition A.2.1 in \cite{van1996weak}), we have for any $j\in \mathbb{N}$,
\begin{equation}
\Pi_{n,k}\left(\left|\|f^{(k)}-\hat{f}^{(k)}_n\|_\infty-\EE\|f^{(k)}-\hat{f}^{(k)}_n\|_\infty\right|>j\epsilon_n\,\big|\,\data\right)\leq 2\exp\left(-j^2\epsilon_n^2/2\|\tilde{V}^{k}_n\|_\infty\right).
\end{equation}
Note that
\begin{align}
& \Pi_{n,k}\left(\left|\|f^{(k)}-\hat{f}^{(k)}_n\|_\infty-\EE\|f^{(k)}-\hat{f}^{(k)}_n\|_\infty\right|>j\epsilon_n\,\big|\,\data\right)\\
& \qquad \qquad \geq \Pi_{n,k}\left(\|f^{(k)}-\hat{f}^{(k)}_n\|_\infty-\EE\|f^{(k)}-\hat{f}^{(k)}_n\|_\infty>j\epsilon_n\,\big|\,\data\right).
\end{align}
Thus, by Lemma~\ref{thm:equivalent.sigma.deriv}, it holds with $\PP_0^{(n)}$-probability at least $1-n^{-10}$ that
\begin{equation}
\|\tilde{V}^{k}_n\|_\infty \leq \frac{2\sigma^2\tilde{\kappa}_{k}^2}{n}, 
\end{equation}
leading to 
\begin{equation}
\Pi_{n,k}\left(\|f^{(k)}-\hat{f}^{(k)}_n\|_\infty-\EE\|f^{(k)}-\hat{f}^{(k)}_n\|_\infty>j\epsilon_n\,\big|\,\data\right)\leq 2\exp\left(-n\epsilon_n^2j^2/4\sigma^2\tilde{\kappa}_{k}^2\right).
\end{equation}
By Lemma~\ref{lem:entropy.integral.deriv}, there exists $C_1>0$ such that with probability at least $1-n^{-10}$,
\begin{equation}\label{eq:entropy.integral.deriv}
\EE\|f^{(k)}-\hat{f}^{(k)}_n\|_\infty\leq C_1 \tilde{\kappa}_{k}\sqrt{\frac{\log n}{n}}.
\end{equation}

We next consider the two cases when $p=\infty$ and $p = 2$ separately. If $p = \infty$, by Assumption (D), with $\PP_0^{(n)}$-probability tending to 1 we have $\|\hat{f}_n^{(k)}-f_0^{(k)}\|_\infty\lesssim \epsilon_n$ and thus
\begin{equation}\label{eq:supremum.eq1}
\|f^{(k)}-\hat{f}_n^{(k)}\|_\infty\geq \|f^{(k)}-f_0^{(k)}\|_\infty-\|\hat{f}_n^{(k)}-f_0^{(k)}\|_\infty\geq \|f^{(k)}-f_0^{(k)}\|_\infty-C\epsilon_n
\end{equation}
for some $C>0$, which implies
\begin{equation}
\Pi_{n,k}\left(\|f^{(k)}-f_0^{(k)}\|_\infty>C\epsilon_n+C_1\tilde{\kappa}_{k}\sqrt{\frac{\log n}{n}}+j\epsilon_n\,\big|\,\data\right)\leq 2\exp\left(-n\epsilon_n^2j^2/4\sigma^2\tilde{\kappa}_{k}^2\right).
\end{equation}
According to Assumption (C), $\tilde{\kappa}_{k}^2=O(n\epsilon_n^2/\log n)$ and $\tilde{\kappa}_{k}\sqrt{\frac{\log n}{n}}\leq C_2\epsilon_n$ for some $C_2>0$, there exists a $J\in\mathbb{N}$ such that $C+C_1C_2\leq J$. Thus, for any $j\geq J$, it holds with $\PP_0^{(n)}$-probability tending to 1 that
\begin{equation}
\Pi_{n,k}\left(\|f^{(k)}-f_0^{(k)}\|_\infty>2j\epsilon_n\,\big|\,\data\right)\leq 2\exp\left(-n\epsilon_n^2j^2/4\sigma^2\tilde{\kappa}_{k}^2\right),
\end{equation}
which implies that
\begin{equation}
\Pi_{n,k}\left(\|f^{(k)}-f_0^{(k)}\|_\infty>j\epsilon_n\,\big|\,\data\right)=\exp\left(-Kn\epsilon_n^2j^2/\tilde{\kappa}_{k}^2\right)
\end{equation}
for some $K>0$. Let $A_n$ denote the event that the preceding display holds, which satisfies $\PP_0^{(n)}(A_n^c)\rightarrow0$. Hence, for any $M_n\rightarrow \infty$,
\begin{align}
& \PP_0^{(n)}\Pi_{n,k}\left(\|f^{(k)}-f_0^{(k)}\|_\infty>M_n\epsilon_n\,\big|\,\data\right)\\
&\qquad \qquad  \leq \PP_0^{(n)}\Pi_{n,k}\left(\|f^{(k)}-f_0^{(k)}\|_\infty>M_n\epsilon_n\,\big|\,\data\right)\mathbbm{1}(A_n)+\PP_0^{(n)}(A_n^c)
\rightarrow 0.
\end{align}
Therefore, $\epsilon_n$ is a contraction rate under the $L_\infty$ norm.

Now we consider the case of $p=2$. Since $\|\hat{f}_n^{(k)}-f_0^{(k)}\|_2\lesssim\epsilon_n$ with $\PP_0^{(n)}$-probability tending to 1, we have
\begin{equation}
\|f^{(k)}-\hat{f}_n^{(k)}\|_\infty\geq \|f^{(k)}-\hat{f}_n^{(k)}\|_2\geq \|f^{(k)}-f_0^{(k)}\|_2-\|\hat{f}_n^{(k)}-f_0^{(k)}\|_2\geq \|f^{(k)}-f_0^{(k)}\|_2-C\epsilon_n.
\end{equation}
Comparing the preceding display with Equation~\eqref{eq:supremum.eq1} and following the same arguments, we have with $\PP_0^{(n)}$-probability tending to 1 that
\begin{equation}
\Pi_{n,k}\left(\|f^{(k)}-f_0^{(k)}\|_2 > j\epsilon_n\,\big|\,\data\right)\leq \exp\left(-Kn\epsilon_n^2j^2/\tilde{\kappa}_{k}^2\right).
\end{equation}
Then a similar argument as in the case of $p = \infty$ yields that $\epsilon_n$ is also a posterior contraction rate. This completes the proof.
\end{proof}

\begin{proof}[Proof of Lemma~\ref{lem:approx.error}]
According to the proof of Theorem 2 in \cite{liu2020non} and substituting $\delta=n^{-10}$, it holds with $\PP_0^{(n)}$-probability at least $1 - n^{-10}$ that
\begin{equation+}\label{eq:KRR.thm2}
\|\hat{f}_n-f_\lambda\|_{\tilde{\bbH}}\leq \frac{\tilde{\kappa}^{-1}C(n,\tilde{\kappa})}{1-C(n,\tilde{\kappa})}\|f_\lambda-f_0\|_\infty+\frac{1}{1-C(n,\tilde{\kappa})}\frac{C_1\tilde{\kappa}\sigma\sqrt{10\log(3n)}}{\sqrt{n}},
\end{equation+}
where $C_1 > 0$ does not depend on $K$ or $n$ and $C(n,\tilde{\kappa})=\frac{\tilde{\kappa}^2 \sqrt{10\log(3n)}}{\sqrt{n}} \left(4 + \frac{4 \tilde{\kappa}\sqrt{10\log(3n)}}{3 \sqrt{n}} \right)$. By choosing $\lambda$ such that $\tilde{\kappa}^2=o(\sqrt{n/\log n})$, we have $C(n,\tilde{\kappa})\leq 1/2$. Consequently,
\begin{equation}
\|\hat{f}_n-f_\lambda\|_{\tilde{\bbH}}\leq \frac{\tilde{\kappa} \sqrt{10\log(3n)}}{\sqrt{n}} \left(4 + \frac{4 \tilde{\kappa}\sqrt{10\log(3n)}}{3 \sqrt{n}} \right)\|f_\lambda-f_0\|_\infty+\frac{2C_1\tilde{\kappa}\sigma\sqrt{10\log(3n)}}{\sqrt{n}}.
\end{equation}
Assuming $\|f_\lambda-f_0\|_\infty = o(1)$, we have
\begin{equation}
\|\hat{f}_n-f_\lambda\|_{\tilde{\bbH}} \lesssim \tilde\kappa \sqrt{\frac{\log n}{n}}.
\end{equation}
Since $\|f^{(k)}\|_2\leq D(k,\lambda) \|f\|_{\tilde{\bbH}}$ for any $f\in\bbH$, we obtain
\begin{equation}
\|\hat{f}_n^{(k)}-f_0^{(k)}\|_2 \lesssim\|f_\lambda^{(k)}-f_0^{(k)}\|_2 + D(k,\lambda) \tilde\kappa \sqrt{\frac{\log n}{n}}.
\end{equation}
\end{proof}

\begin{proof}[Proof of Theorem~\ref{thm:exp.contraction.deriv}]

Note that the Fourier basis $\{\psi_i\}_{i=1}^\infty$ satisfies Assumption (A) with $C_{k,\psi}=\sqrt{2}(2\pi)^k$ and Assumption (B) with $L_{k,\psi}=\sqrt{2}(2\pi)^{k+1}$. According to Lemma~\ref{lem:differentiability.exp}, we have $\tilde{\kappa}_{\gamma,m}^2\asymp (-\log \lambda)^{2m+1}$ for any $m\in\mathbb{N}_0$. Since $\lambda\asymp \log n/n$, we have $\tilde{\kappa}_\gamma^2\asymp (-\log\lambda) \asymp \log (n/\log n)=o(\sqrt{n/\log n})$. Besides, $\tilde{\kappa}_{\gamma,k}^2\asymp (\log (n/\log n) )^{2k+1} = O(n\epsilon_n^2/\log n)$. It also follows that $\hat{\kappa}_{\gamma,k+1}^2 \asymp (-\log \lambda)^{2k+3} \asymp (\log (n/\log n))^{2k+3}=O(n)$ for any $k\in\mathbb{N}_0$. This verifies Assumption (C). Finally, Assumption (D) is given by Lemma~\ref{thm:minimax.diff.exp}, which shows a convergence rate of $\hat{f}_n^{(k)}$ under the $L_2$ norm is $\epsilon_n=(\log n)^{k+1}/\sqrt{n}$ when $\gamma>1/2$. Invoking Theorem~\ref{thm:deriv.contraction.l2}, $\epsilon_n$ is a contraction rate of the posterior distribution $\Pi_{n,k}(\cdot \mid \data)$. This completes the proof. 
\end{proof}

\begin{proof}[Proof of Theorem~\ref{thm:contraction.deriv}]
The arguments are similar to those used to prove Theorem~\ref{thm:exp.contraction.deriv}, and we only note the key differences below. We first verify Assumption (C). According to Lemma~\ref{lem:differentiability.matern}, we have $\tilde{\kappa}_{\alpha,m}^2\asymp \lambda^{-\frac{2m+1}{2\alpha}}$ for any $m<\alpha-1/2$ and $m\in\mathbb{N}$. Since $\lambda\asymp (\log n/n)^{\frac{2\alpha}{2\alpha+1}}$, we have $\tilde{\kappa}_\alpha^2\asymp\lambda^{-\frac 1{2\alpha}}\asymp ({n}/{\log n})^{\frac{1}{2\alpha+1}}=o(\sqrt{n/\log n})$. Besides, $\tilde{\kappa}_{\alpha,k}^2\asymp (n/\log n )^{\frac{2k+1}{2\alpha+1}} = O(n\epsilon_n^2/\log n)$. It also follows that $\hat{\kappa}_{\alpha,k+1}^2\asymp \lambda^{-\frac{2k+3}{2\alpha}}\asymp\left(n/\log n\right)^{\frac{2k+3}{2\alpha+1}}=O(n)$ for any $k<\alpha-3/2$ and $k\in\mathbb{N}_0$. Assumption (D) follows from Lemma~\ref{thm:minimax.diff.matern}. This completes the proof. 
\end{proof}

\begin{proof}[Proof of Theorem~\ref{thm:eb.contraction}]
Since $K(X,X)$ is non-negative definite, we have $u_i\geq 0$ for $1\leq i\leq n$. Note that $\sup_{x\in\mX}K(x,x)<\infty$ as $K$ is a continuous bivariate function on a compact support $\mX\times \mX$. Then we have $\sum_{i=1}^{n}u_i=\tr(K(X,X)) \leq n \kappa^2$ where $\kappa^2 = \sup_{x\in\mX}K(x,x)$. 
Let $\bm f_0=(f_0(X_1),\ldots,f_0(X_n))^T$. The MMLE $\hat{\sigma}_n^2$ is a quadratic form in $Y$. In view of the well-known formula for the expectation of quadratic forms (cf. Theorem 11.19 in \cite{schott2016matrix}), we obtain
\begin{equation}
\EE(\hat{\sigma}^2_n\mid X)=\lambda \sigma_0^2\tr([K(X,X)+n\lambda I_n]^{-1})+\lambda\bm f_0^T[K(X,X)+n\lambda I_n]^{-1}\bm f_0.
\end{equation}
Therefore,
\begin{align}
|\EE(\hat{\sigma}^2_n\mid X)-\sigma_0^2| &\leq \left|\lambda \sigma_0^2\tr([K(X,X)+n\lambda I_n]^{-1})-\sigma_0^2\right|+\lambda\bm f^T_0[K(X,X)+n\lambda I_n]^{-1}\bm f_0\\
\label{eq:mean.sigma} &\leq \left|n^{-1} \sigma_0^2\tr([(n\lambda)^{-1}K(X,X)+I_n]^{-1})-\sigma_0^2\right|+\lambda\bm f^T_0[K(X,X)+n\lambda I_n]^{-1}\bm f_0.
\end{align}
It follows that the first term is bounded by
\begin{equation}\label{eq:variance.1}
\begin{aligned}
&\left|n^{-1} \sigma_0^2\tr([(n\lambda)^{-1}K(X,X)+I_n]^{-1})-\sigma_0^2\right|\\
&\qquad\qquad= n^{-1}\sigma_0^2 \tr(I_n-[(n\lambda)^{-1}K(X,X)+I_n]^{-1})
=n^{-1}\sigma_0^2\sum_{i=1}^n\left(1-\frac{1}{u_i/n\lambda+1}\right)\\
&\qquad\qquad= n^{-1}\sigma_0^2\sum_{i=1}^{n}\frac{u_i}{n\lambda+u_i}.
\end{aligned}
\end{equation}

We next consider the second term in Equation~\eqref{eq:mean.sigma}. Then 
\begin{align}
\|f_{X, \lambda}\|_{\bbH}^2 &= \bm f_0^T [K(X,X)+n\lambda I_n]^{-1} K(X, X) [K(X,X)+n\lambda I_n]^{-1} \bm f_0\\
&=  \bm f_0^T [K(X,X)+n\lambda I_n]^{-1} \bm f_{X, \lambda},
\end{align}
where $\bm f_{X, \lambda} =: f_{X, \lambda}(X) = K(X, X) [K(X,X)+n\lambda I_n]^{-1} \bm f_0$ is the vector evaluating $f_{X, \lambda}$ on $X$. 
Therefore, 
\begin{align}
\|f_{X, \lambda}\|_{\bbH}^2 & = \bm f_0^T [K(X,X)+n\lambda I_n]^{-1} (\bm f_{X, \lambda} - \bm f_0 + \bm f_0) \\
& = \bm f_0^T [K(X,X)+n\lambda I_n]^{-1} (\bm f_{X, \lambda} - \bm f_0) + \bm f_0^T [K(X,X)+n\lambda I_n]^{-1} \bm f_0,
\end{align}
which implies that
\begin{align}
&\ \lambda\bm f_0^T [K(X,X)+n\lambda I_n]^{-1} \bm f_0 \\
\label{eq:quadratic}\leq &\ \lambda\|f_\lambda\|_{\bbH}^2 + \lambda\|f_{X, \lambda} - f_\lambda \|_{\bbH}^2 - \lambda\bm f_0^T [K(X,X)+n\lambda I_n]^{-1} (\bm f_{X, \lambda} - \bm f_0).
\end{align}
Rewrite $f_0$ as $f_0=L_K^{r}g$ for some $g=L_K^{-r}f_0\in \Ltwo$ and thus $f_i=\mu_i^{r}g_i$. Representing the function $g$ by $g=\sum_{i=1}^{\infty}  g_i\psi_i$, gives $f_\lambda = \sum_{i=1}^{\infty} \frac{\mu_i}{\mu_i+\lambda}\mu_i^{r} g_i\psi_i$. When $0 <r\leq 1/2$, we have
\begin{align}
\lambda\|f_\lambda\|_\bbH^2&=\lambda\sum_{i=1}^{\infty} \left(\frac{\mu_i}{\mu_i+\lambda}\mu_i^{r} g_i\right)^2\bigg /\mu_i\\
&=\lambda^{2r}\sum_{i=1}^{\infty} \left(\frac{\lambda}{\mu_i+\lambda}\right)^{1-2r}\left(\frac{\mu_i}{\mu_i+\lambda}\right)^{1+2r}g_i^2\\
\label{eq:quadratic.1}&\leq \lambda^{2r}\|L_K^{-r}f_0\|^2_{2}.
\end{align}
According to (14) in \cite{liu2020non}, it holds with $\PP_0^{(n)}$-probability at least $1-n^{-10}$ that
\begin{equation}
\|f_{X,\lambda}-f_\lambda\|_\bbH\leq\frac{\kappa \|f_0\|_\infty \sqrt{10 \log(3n)}}{\sqrt{n}\lambda}  \left(10 + \frac{4 \kappa\sqrt{10 \log(3n)}}{3 \sqrt{n\lambda}} \right),
\end{equation}
and thus 
\begin{equation}
\label{eq:quadratic.2}\lambda\|f_{X,\lambda}-f_\lambda\|^2_\bbH\leq\frac{\kappa^2 \|f_0\|^2_\infty \cdot 10 \log(3n)}{n\lambda}  \left(10 + \frac{4 \kappa\sqrt{10 \log(3n)}}{3 \sqrt{n\lambda}} \right)^2.
\end{equation}
Let $\lambda_{\max}(A)$ denote the largest eigenvalue of a matrix $A$. We have $\lambda_{\max}([K(X,X)+n\lambda I_n]^{-1})=(u_n+n\lambda)^{-1}\leq (n\lambda)^{-1}$. Hence,
\begin{align}
|\lambda \bm f_0^T [K(X,X)+n\lambda I_n]^{-1} (\bm f_{X, \lambda} - \bm f_0)| &\leq \lambda \cdot \lambda_{\max}([K(X,X)+n\lambda I_n]^{-1}) |\bm f_0^T(\bm f_{X, \lambda} - \bm f_0)|\\
& \leq \lambda (n\lambda)^{-1} n \|f_0\|_\infty \|f_{X,\lambda} - f_0\|_\infty\\
& \leq \|f_0\|_\infty (\|f_{X,\lambda} - f_\lambda\|_\infty + \|f_\lambda - f_0\|_\infty)\\
\label{eq:quadratic.3}& \leq \|f_0\|_\infty ( \kappa \|f_{X,\lambda} - f_\lambda\|_{\bbH} + \|f_\lambda - f_0\|_\infty).
\end{align}
Substituting Equations~\eqref{eq:quadratic.1}, \eqref{eq:quadratic.2} and \eqref{eq:quadratic.3} into Equation~\eqref{eq:quadratic}, there exists $R_n \lesssim \lambda^{2r} + \log n/(n\lambda) + \sqrt{\log n}/(\sqrt{n}\lambda) + \|f_\lambda - f_0\|_\infty$ that does not depend on $X$ such that with $\PP_0^{(n)}$-probability at least $1-n^{-10}$,
\begin{equation}\label{mean.second}
\lambda \bm f_0^T [K(X,X)+n\lambda I_n]^{-1} \bm f_0 \leq R_n.
\end{equation}

Combining Equations~\eqref{eq:variance.1} and \eqref{mean.second} gives that with $\PP_0^{(n)}$-probability at least $1-n^{-10}$,
\begin{equation}
|\EE(\hat{\sigma}^2_n\mid X)-\sigma_0^2|\leq 
n^{-1} \sigma_0^2 \sum_{i=1}^{n}\frac{u_i}{n\lambda+u_i} + R_n.
\end{equation}

We now bound the variance of $\hat{\sigma}^2_n$. Using the variance formula for quadratic forms (cf. Theorem 11.23 in \cite{schott2016matrix}), we have
\begin{align}
\Var(\hat{\sigma}^2_n\mid X)&=2\lambda^2\sigma_0^4\tr([K(X,X)+n\lambda I_n]^{-2})+4\lambda^2 \sigma_0^2\bm f^T_0[K(X,X)+n\lambda I_n]^{-2}\bm f_0\\
&\leq 2\lambda^2\sigma_0^4\cdot n(n\lambda)^{-2}+4\lambda^2 \sigma_0^2\cdot \lambda_{\max}([K(X,X)+n\lambda I_n]^{-2})\|\bm f_0\|_2^2\\
&\leq 2\sigma_0^4 n^{-1}+4\sigma_0^2\|f_0\|_\infty^2 n^{-1}.
\end{align}
Therefore, for any $X \in \mX^n$, it holds with $\PP_0^{(n)}$-probability at least $1-n^{-10}$ that
\begin{align}
\EE[(\hat{\sigma}^2_n-\sigma_0^2)^2\mid X] &=\Var(\hat{\sigma}^2_n\mid X)+[\EE(\hat{\sigma}^2_n\mid X)-\sigma_0^2)]^2\\
& \leq  n^{-1} \sigma_0^2 \sum_{i=1}^{n}\frac{u_i}{n\lambda+u_i} + R_n + 2\sigma_0^4 n^{-1}+4\sigma_0^2\|f_0\|_\infty^2 n^{-1},
\end{align}
which by assumptions implies that
\begin{equation}
\EE[(\hat{\sigma}^2_n-\sigma_0^2)^2] = \EE[\EE[(\hat{\sigma}^2_n-\sigma_0^2)^2\mid X]] = o(1).
\end{equation}
It thus follows that $\hat{\sigma}^2_n$ converges to $\sigma_0^2$ in $\PP_0^{(n)}$-probability by applying Chebyshev's inequality.

Now we prove Theorem~\ref{thm:deriv.contraction.l2} under the empirical Bayes scheme. Consider a shrinking neighborhood $\mathcal{B}_n = (\sigma_0^2 - r_n, \sigma_0^2 + r_n)$ of $\sigma_0^2$ with $r_n = o(1)$, which satisfies that $\PP_0^{(n)}(\hat{\sigma}^2_n\in\mathcal{B}_n)\rightarrow 1$ according to the arguments above. Conditional on $\mathcal{B}_n$, Lemma~\ref{thm:equivalent.sigma.deriv} gives
\begin{equation}
\|\tilde{V}^{k}_{n}\|_\infty \leq \frac{2(\sigma_0^2+o(1))\tilde{\kappa}_{k}^2}{n}.
\end{equation}
Then, all the established inequalities in the proof of Theorem~\ref{thm:deriv.contraction.l2} hold uniformly over $\sigma^2\in \mathcal{B}_n$. It follows that
\begin{equation}
\sup_{\sigma^2\in\mathcal{B}_n}\Pi_{n,k}\left(f: \|f^{(k)}-f_0^{(k)}\|_p\geq M_n \epsilon_n \mid \data\right)\rightarrow 0
\end{equation}
in $\PP_0^{(n)}$-probability for $p=2,\infty$, which directly implies that
\begin{equation}
\Pi_{n,k}\left(f: \|f^{(k)}-f_0^{(k)}\|_p\geq M_n \epsilon_n \mid \data\right) \Big |_{\sigma^2 = \hat\sigma^2_n}\rightarrow 0
\end{equation}
in $\PP_0^{(n)}$-probability for $p=2,\infty$. This completes the proof. 
\end{proof}

\begin{proof}[Proof of Corollary~\ref{thm:eb.example}]
We just need to verify the conditions in Theorem~\ref{thm:eb.contraction}. We first prove the case when $f_0 \in A^\gamma[0,1]$ and $K_\gamma$ is used. First, we have
\begin{equation}
\s \frac{f_i^2}{\mu_i} \asymp \s e^{2\gamma i} f_i^2 < \infty. 
\end{equation}
It is obvious that $n\lambda \asymp \log n \to \infty$ by noting that $\lambda \asymp \log n/n$, and thus the second condition holds in view of Remark 2. Lemma~\ref{lem:exp.mismatch.deterministic} with $\gamma_0 = \gamma$ shows that $\|f_\lambda - f_0\|_\infty \lesssim \sqrt{\lambda} = o(1)$.

For $f_0 \in W^\alpha[0,1]$ or $f_0 \in H^\alpha[0,1]$ with $K_\alpha$ being the kernel, we have

\begin{equation}
\s \frac{f_i^2}{\mu_i} \asymp \s i^{2\alpha} f_i^2 \leq \left(\s i^\alpha |f_i|\right)^2 <  \infty.
\end{equation}
The second condition is satisfied since $\lambda \asymp (\log n/n)^{\frac{2\alpha}{2\alpha+1}}$. Using Lemma~\ref{lem:poly.mismatch.deterministic} with $\alpha_0 = \alpha$, it holds that $\|f_\lambda - f_0\|_\infty \lesssim \sqrt{\lambda} = o(1)$. 
\end{proof}

\begin{proof}[Proof of Theorem~\ref{thm:lambda.mmle}]
We first prove part (b). Let $\Sigma = n^{-1} Y^T[K(X,X)+n\lambda I_n]^{-1}Y\cdot[K(X,X)+n\lambda I_n]$. Then the log marginal likelihood by substituting $\hat\sigma_n^2$ for $\sigma_0^2$ is
\begin{align}
\ell(\lambda\mid X, Y, \hat\sigma_n^2) &= -\frac{1}{2}\log\det(2\pi\Sigma)-\frac{1}{2}Y^T\Sigma^{-1}Y\\
&=-\frac{1}{2}\log\det(2\pi\Sigma)-\frac{n}{2}\\
&=-\frac{n}{2}\log(2\pi)-\frac{1}{2}\log\det(\Sigma)-\frac{n}{2}\\
&=-\frac{1}{2}\log\det(\Sigma) + \text{constant}.
\end{align}
The MMLE of $\lambda$ is given by
\begin{equation}
\hat \lambda_n = \underset{\lambda > 0}{\text{argmax}} \
\ell(\lambda \mid X, y, \hat\sigma_n^2).
\end{equation}
Taking the first derivative of $\ell(\lambda\mid X, Y, \hat\sigma_n^2)$ with respect to $\lambda$, it follows that
\begin{equation}\label{eq:derivative}
\frac{\partial \ell}{\partial \lambda}=-\frac{1}{2}\tr\left(\Sigma^{-1}\frac{\partial \Sigma}{\partial \lambda}\right).
\end{equation}
Note that
\begin{align}
\frac{\partial \Sigma}{\partial \lambda}&=n^{-1}Y^T\frac{\partial [K(X,X)+n\lambda I_n]^{-1}}{\partial \lambda}Y K(X,X)+\frac{\partial \lambda Y^T[K(X,X)+n\lambda I_n]^{-1}Y}{\partial \lambda}\\
&=n^{-1}\frac{\partial Y^T[K(X,X)+n\lambda I_n]^{-1}Y}{\partial \lambda} K(X,X)+Y^T[K(X,X)+n\lambda I_n]^{-1}Y I_n\\
&\quad +\lambda\cdot \frac{\partial Y^T[K(X,X)+n\lambda I_n]^{-1}Y}{\partial \lambda}I_n.
\end{align}
We further have
\begin{align}
&\frac{\partial Y^T[K(X,X)+n\lambda I_n]^{-1}Y}{\partial \lambda}\\
=\ &Y^T\frac{\partial [K(X,X)+n\lambda I_n]^{-1}}{\partial \lambda}Y\\
=\ &-Y^T [K(X,X)+n\lambda I_n]^{-1} \frac{\partial [K(X,X)+n\lambda I_n]}{\partial\lambda} [K(X,X)+n\lambda I_n]^{-1}Y\\
=\ &-nY^T [K(X,X)+n\lambda I_n]^{-2}Y.
\end{align}
Thus,
\begin{align}
\frac{\partial \Sigma}{\partial \lambda}
&= -Y^T [K(X,X)+n\lambda I_n]^{-2}Y K(X,X)+Y^T[K(X,X)+n\lambda I_n]^{-1}YI_n \\
\label{eq:derivative.1}&\quad -\lambda nY^T [K(X,X)+n\lambda I_n]^{-2}YI_n.
\end{align}
We also have
\begin{equation}\label{eq:derivative.2}
\Sigma^{-1}=n \left\{Y^T[K(X,X)+n\lambda I_n]^{-1}Y\right\}^{-1}\cdot[K(X,X)+n\lambda I_n]^{-1}.
\end{equation}
Combining Equations~\eqref{eq:derivative.1} and \eqref{eq:derivative.2}, we arrive at
\begin{align}
&\Sigma^{-1}\frac{\partial \Sigma}{\partial \lambda}\\
=\ & -\frac{nY^T [K(X,X)+n\lambda I_n]^{-2}Y}{Y^T[K(X,X)+n\lambda I_n]^{-1}Y} [K(X,X)+n\lambda I_n]^{-1}K(X,X)+n[K(X,X)+n\lambda I_n]^{-1}\\
&-\frac{n^2\lambda Y^T [K(X,X)+n\lambda I_n]^{-2}Y}{Y^T[K(X,X)+n\lambda I_n]^{-1}Y} [K(X,X)+n\lambda I_n]^{-1}\\
=\ &-\frac{nY^T [K(X,X)+n\lambda I_n]^{-2}Y}{Y^T[K(X,X)+n\lambda I_n]^{-1}Y} [K(X,X)+n\lambda I_n]^{-1}[K(X,X)\\
&+n\lambda I_n]+n[K(X,X)+n\lambda I_n]^{-1}\\
=\ &-\frac{nY^T [K(X,X)+n\lambda I_n]^{-2}Y}{Y^T[K(X,X)+n\lambda I_n]^{-1}Y}I_n +n[K(X,X)+n\lambda I_n]^{-1}.
\end{align}
Setting Equation~\eqref{eq:derivative} to zero, $\hat\lambda_n$ satisfies
\begin{equation}\label{eq:constraint}
\frac{n\lambda^2Y^T [K(X,X)+n\lambda I_n]^{-2}Y}{\lambda Y^T[K(X,X)+n\lambda I_n]^{-1}Y} =  \frac{\tilde\sigma_n^2}{\hat\sigma_n^2} = \lambda\tr([K(X,X)+n\lambda I_n]^{-1}).
\end{equation}
On one hand,
\begin{equation}
\frac{\tilde\sigma_n^2}{\hat\sigma_n^2} = 1 - \frac{\hat\sigma_n^2-\tilde\sigma_n^2}{\hat\sigma_n^2}.
\end{equation}
On the other hand,
\begin{equation}
\lambda\tr([K(X,X)+n\lambda I_n]^{-1}) = \lambda \sum_{i=1}^n \frac{1}{u_i + n\lambda} = \frac{1}{n}\sum_{i=1}^n \left(1-\frac{u_i}{u_i + n\lambda}\right) = 1-\frac{1}{n}\sum_{i=1}^n \frac{u_i}{u_i + n\lambda}.
\end{equation}
Thus,
\begin{equation}\label{eq:MMLE}
\frac{\hat\sigma_n^2-\tilde\sigma_n^2}{\hat\sigma_n^2} = \frac{1}{n}\sum_{i=1}^n \frac{u_i}{u_i + n\lambda}.
\end{equation}
In view of Lemma~\ref{lem:finite.sum}, with $\PP_0^{(n)}$-probability at least $1-n^{-10}$ we have
\begin{equation}
\frac{1}{n} \sum_{i=1}^n \frac{u_i}{u_i + n\lambda} \asymp n^{-1} \lambda^{-\frac{1}{2\alpha}},
\end{equation}
where we require $\lambda \gtrsim (n/\log n)^{-\alpha + \frac{1}{2}}$. Now we analyze the left-hand side of Equation~\eqref{eq:MMLE}. Note that
\begin{equation}
\s \frac{f_i^2}{\mu_i^{\frac{\alpha_0}{\alpha}}} \asymp \s i^{2\alpha_0}f_i^2 \leq \left( \s i^{\alpha_0}|f_i| \right)^2 < \infty.
\end{equation}
Beside, $n^{-1} \sum_{i=1}^n \frac{u_i}{u_i + n\lambda} = o(1)$ in $\PP_0^{(n)}$-probability if $\lambda \gtrsim n^{-2\alpha}$ and Lemma~\ref{lem:poly.mismatch.deterministic} ensures that $\|f_\lambda - f_0\|_\infty = o(1)$. Hence, Theorem~\ref{thm:eb.contraction} gives that $\hat\sigma_n^2 \rightarrow \sigma_0^2$ in $\PP_0^{(n)}$-probability. We only need to study the rate of
\begin{align}
\hat\sigma_n^2-\tilde\sigma_n^2 &= \lambda Y^T[K(X,X)+n\lambda I_n]^{-1} [K(X,X)+n\lambda I_n] [K(X,X)+n\lambda I_n]^{-1} Y \\
&\quad - n\lambda^2Y^T[K(X,X)+n\lambda I_n]^{-2}\\
&=\lambda Y^T[K(X,X)+n\lambda I_n]^{-1} K(X,X) [K(X,X)+n\lambda I_n]^{-1} Y.
\end{align}
Let $\hat f_n(\cdot) = K(\cdot, X)[K(X,X)+n\lambda I_n]^{-1} Y$. Then,
\begin{equation}
\|\hat f_n\|_{\bbH}^2 = Y^T [K(X,X)+n\lambda I_n]^{-1} K(X, X) [K(X,X)+n\lambda I_n]^{-1} Y,
\end{equation}
and thus
\begin{equation}
\hat\sigma_n^2-\tilde\sigma_n^2=\lambda\|\hat f_n\|_{\bbH}^2 \leq \lambda\|\hat f_n - f_\lambda\|_{\bbH}^2 + \lambda\| f_\lambda\|_{\bbH}^2.
\end{equation}
According to Theorem 4 in \cite{liu2020non}, it holds with probability at least $1-n^{-10}$ that
\begin{align}
\|\hat{f}_n-f_\lambda\|_\bbH &\leq\frac{\kappa \|f_0\|_\infty \sqrt{10\log(9n)}}{\sqrt{n}\lambda}  \left(10 + \frac{4 \kappa\sqrt{10\log(9n)}}{3 \sqrt{n\lambda}} \right)+\frac{C \kappa \sigma \sqrt{10\log(3n)}}{\sqrt{n} \lambda}.
\end{align}
Moreover,
\begin{align}
\lambda\|f_\lambda\|_\bbH^2&=\lambda\sum_{i=1}^{\infty} \left(\frac{\mu_i}{\mu_i+\lambda} f_i\right)^2 \bigg /\mu_i\\
&=\lambda\sum_{i=1}^{\infty} \frac{\mu_i^{1+\frac{\alpha_0}{\alpha}}}{(\mu_i+\lambda)^2}\frac{f_i^2}{\mu_i^{\frac{\alpha_0}{\alpha}}}\\
&=\lambda^{\frac{\alpha_0}{\alpha}}\sum_{i=1}^{\infty} \left(\frac{\lambda}{\mu_i+\lambda}\right)^{1-\frac{\alpha_0}{\alpha}}\left(\frac{\mu_i}{\mu_i+\lambda}\right)^{1+\frac{\alpha_0}{\alpha}}\frac{f_i^2}{\mu_i^{\frac{\alpha_0}{\alpha}}}\\
&\lesssim \lambda^{\frac{\alpha_0}{\alpha}}.
\end{align}
We next show that this is also the lower bound for $\widetilde{W}^{\alpha_0}$. For any $\delta > 0$, let $I_\lambda := c_\delta \lambda^{-1/(2\alpha)}$ for a constant $c_\delta > 0$ chosen large enough so that $M_\lambda := I_\lambda / 2 \ge N_\delta$ for all sufficiently small $\lambda$, where $N_{\delta}$ comes from the lower bound condition (6). This ensures that the lower bound assumption on $f_i^2$ holds for all $i \in [M_\lambda, I_\lambda]$.
Since $\mu_i \asymp i^{-2\alpha}$, for all $i \in [M_{\lambda}, I_{\lambda}],$  we have $\mu_i \asymp \lambda,$ yielding $  \frac{\mu_i}{(\mu_i + \lambda)^2}  \asymp \frac{1}{\lambda}.$
Hence,
\[
\lambda\|f_\lambda\|_\bbH^2 = \lambda \sum_{i=1}^\infty \frac{\mu_i f_i^2}{(\mu_i + \lambda)^2} 
\;\gtrsim\;
\lambda \sum_{i = M_\lambda}^{I_\lambda} f_i^2;
\]
here and throughout, summations over real-valued indices (e.g., $M_\lambda$ to $I_\lambda$) are interpreted as sums over the integer indices in that interval. Now consider each refined function class. For $f_0 \in \widetilde{W}^{\alpha_0}$, we assume $f_i^2 \ge C_\delta \, i^{-2\alpha_0 - 1 - \delta}$. Hence,
$\sum_{i = M_\lambda}^{I_\lambda}
f_i^2
\;\gtrsim\;
\sum_{i = M_\lambda}^{I_\lambda}
i^{-2\alpha_0-1-\delta}
\;\asymp\;
I_\lambda^{-2\alpha_0-\delta} \gtrsim 
\lambda^{\alpha_0/\alpha + \delta/(2\alpha)}.$ Since $\delta>0$ is arbitrary, $\lambda \| f_\lambda \|_{\mathbb H}^2
\;\gtrsim\;
\lambda^{\alpha_0/\alpha}$ up to logarithmic factors.  

Therefore, with $\PP_0^{(n)}$-probability at least $1-n^{-10}$, the $\hat\lambda_n$ satisfies
\begin{equation}
\hat\lambda_n^{\frac{\alpha_0}{\alpha}} \asymp n^{-1} \hat\lambda_n^{-\frac{1}{2\alpha}},
\end{equation}
ignoring logarithmic terms. Solving for $\hat\lambda_n$ yields the rate of order $n^{-\frac{2\alpha}{2\alpha_0+1}}$. It satisfies the condition that $\hat\lambda_n \gtrsim (n/\log n)^{-\alpha + \frac{1}{2}}$ and $\hat\lambda_n \gtrsim n^{-2\alpha}$ when $\alpha > \frac{2\alpha_0+1}{4\alpha_0-2}$.

Now we prove part (a) by starting from Equation~\eqref{eq:MMLE}. On one hand, by Lemma~\ref{lem:finite.sum}, it holds with $\PP_0^{(n)}$-probability at least $1-n^{-10}$ that
\begin{equation}
\frac{1}{n} \sum_{i=1}^n \frac{u_i}{u_i + n\lambda} \asymp n^{-1}\log\frac{1}{\lambda}.
\end{equation}
assuming $\lambda \gtrsim e^{-\sqrt{n/\log n}}$. On the other hand, for $f_0\in A^{\gamma_0}[0,1]$ and $K_\gamma$ we have
\begin{equation}
\s \frac{f_i^2}{\mu_i^{\frac{\gamma_0}{\gamma}}} \asymp \s e^{2\gamma_0 i} f_i^2 \leq \left( \s e^{\gamma_0 i}|f_i| \right)^2 < \infty.
\end{equation}
Beside, $n^{-1} \sum_{i=1}^n \frac{u_i}{u_i + n\lambda} = o(1)$ in $\PP_0^{(n)}$-probability if $\lambda \gtrsim e^{-n}$ and Lemma~\ref{lem:exp.mismatch.deterministic} gives $\|f_\lambda - f_0\|_\infty = o(1)$, which satisfies the conditions in Theorem~\ref{thm:eb.contraction}. Similarly, we can show that $\lambda\|f_\lambda\|_\bbH^2 \lesssim \lambda^{\frac{\gamma_0}{\gamma}}$.

We next show that the same lower bound holds for $\widetilde A^{\gamma_0}$ using similar arugments for $\widetilde W^{\alpha_0}$.
Fix $\delta>0$, and let
$I_\lambda := \frac{c_{\delta}'}{2\gamma}\log\frac{1}{\lambda}$ for a constant $c_\delta' > 0$ chosen large enough so that $M_\lambda := I_\lambda / 2 \ge N_\delta$ for all sufficiently small $\lambda$, where $N_{\delta}$ comes from the lower bound condition (6). Since $\mu_i \asymp e^{-2\gamma i}$, for all $i\in[M_\lambda,I_\lambda]$ we have
$\mu_i \asymp \lambda$, and therefore $\frac{\mu_i}{(\mu_i+\lambda)^2} \asymp \frac{1}{\lambda}.$
It follows that
\[
\lambda\|f_\lambda\|_{\bbH}^2
=
\lambda \sum_{i=1}^\infty \frac{\mu_i f_i^2}{(\mu_i+\lambda)^2}
\;\gtrsim\;
\lambda \sum_{i=M_\lambda}^{I_\lambda} f_i^2
\gtrsim
\sum_{i=M_\lambda}^{I_\lambda} e^{-2(\gamma_0+\delta)i}
\;\asymp\;
e^{-2(\gamma_0+\delta)M_\lambda}  
=
\lambda^{\gamma_0/\gamma + \delta/\gamma}.
\]
Since $\delta>0$ is arbitrary, we conclude that $
\lambda\|f_\lambda\|_{\bbH}^2
\;\gtrsim\;
\lambda^{\gamma_0/\gamma}$ up to logarithmic factors. 

Therefore, with $\PP_0^{(n)}$-probability at least $1-n^{-10}$, the MMLE of $\lambda$ should satisfy
\begin{equation}
\hat\lambda_n^{\frac{\gamma_0}{\gamma}} \asymp n^{-1},
\end{equation}
ignoring logarithmic terms. This yields the rate of order  $n^{-\frac{\gamma}{\gamma_0}}$ for $\hat\lambda_n$ up to logarithmic factors, which satisfies $\hat\lambda_n \gtrsim e^{-\sqrt{n/\log n}}$ and $\hat\lambda_n \gtrsim e^{-n}$.
\end{proof}

\begin{proof}[Proof of Theorem~\ref{thm:adaptive.exp}]
This is an immediate result of Theorem~\ref{thm:eb.contraction}. The proof procedure is similar to that used in Theorem~\ref{thm:exp.contraction.deriv} and Corollary~\ref{thm:eb.example}, by combining the rate of $\lambda$ in Theorem~\ref{thm:lambda.mmle}, the convergence rate in Lemma~\ref{lem:exp.mismatch}, and Lemma~\ref{lem:finite.sum}.
\end{proof}

\begin{proof}[Proof of Theorem~\ref{thm:adaptive.poly}]
This is an immediate result of Theorem~\ref{thm:eb.contraction}. The proof procedure is similar to that used in Theorem~\ref{thm:contraction.deriv} and Corollary~\ref{thm:eb.example}, by combining the rate of $\lambda$ in Theorem~\ref{thm:lambda.mmle}, the convergence rate in Lemma~\ref{lem:poly.mismatch}, and Lemma~\ref{lem:finite.sum}.
\end{proof}

\subsection{Proofs in Section~\ref{sec:non-asymptotic}}

\begin{proof}[Proof of Lemma~\ref{thm:equivalent.sigma.deriv}]
Let $K_{0k,x'}(\cdot)=\partial_{x'}^kK(\cdot,x')$ and 
$$\hat{K}_{0k,x'}(\cdot)=K(\cdot, X)[K(X, X) + n \lambda I_n]^{-1} K_{0k}(X, x').$$ 
It is easy to see that $\hat{K}_{0k,x'}$ is the solution to a noise-free KRR with observations $K_{0k,x'}$. Moreover, we have
\begin{equation}\label{eq:tilde.variance}
\begin{split}
|\sigma^{-2}n\lambda\tilde{V}^{k}_{n}(x')| &= \left|\partial^k_{x}(\hat{K}_{0k,x'}-K_{0k,x'})(x)|_{x=x'}\right|\\
&\leq \|\partial^k_{x}(\hat{K}_{0k,x'}-K_{0k,x'})\|_\infty \leq \tilde{\kappa}_{k}
\|\hat{K}_{0k,x'}-K_{0k,x'}\|_{\tilde{\bbH}}.
\end{split}
\end{equation}
According to Lemma~\ref{cor:h.tilde.noiseless}, it holds with $\PP_0^{(n)}$-probability at least $1 - n^{-10}$  that
\begin{equation}
\|\hat{K}_{0k,x'}-K_{0k,x'}\|_{\tilde{\bbH}}\leq 2\|L_{\tilde{K}}K_{0k,x'}-K_{0k,x'}\|_{\tilde{\bbH}} \leq 2\lambda \tilde{\kappa}_{k}.
\end{equation}
\end{proof}

\begin{proof}[Proof of Lemma~\ref{lem:differentiability.exp}]
According to the proof of Lemma 11 in \cite{liu2020non}, we have
\begin{equation}
\tilde{\kappa}_{\gamma,m}^2 \asymp \s \frac{i^{2m}\mu_i}{\lambda + \mu_i} = \hat{\kappa}_{\gamma,m}^2 \asymp \sum_{i=1}^{\infty}\frac{e^{-2\gamma i}i^{2m}}{\lambda + e^{-2\gamma i}} \asymp \int_{0}^{\infty} \frac{x^{2m}}{\lambda e^{2 \gamma x} + 1} dx.
\end{equation}
Letting $e^{-\eta} = \lambda \in (0, 1)$ where $\eta > 0$, it follows that
\begin{equation}
\tilde{\kappa}_{\gamma,m}^2\asymp  (2\gamma)^{-2m-1} \int_{0}^{\infty} \frac{t^{2m}}{\lambda e^t+1}dt = (2\gamma)^{-2m-1} (2m)! F_{2m}(\eta),
\end{equation}
where $F_{2m}$ is the Fermi-Dirac integral of order $2m$ \citep{mcdougall1938computation}.

According to Equation (7.3) in \cite{mcdougall1938computation}, as $\eta \rightarrow \infty$ (or $\lambda \rightarrow 0$), it holds that
\begin{equation}
F_{2m}(\eta) \rightarrow \frac{1}{2m + 1} \eta^{2m + 1} = \frac{1}{2m + 1}(-\log \lambda)^{2m + 1}. 
\end{equation}
Therefore,
\begin{equation}
\tilde{\kappa}_{\gamma,m}^2 \asymp \hat{\kappa}_{\gamma,m}^2 \asymp (-\log \lambda)^{2m + 1}.
\end{equation}
The differentiability of $\tilde{K}_\gamma$ directly follows from the boundedness of $\tilde{\kappa}^2_{\gamma,m}$ for sufficiently small $\lambda$.
\end{proof}

\begin{proof}[Proof of Lemma~\ref{lem:differentiability.matern}]

According to the proof of Lemma 11 in \cite{liu2020non}, it holds that
\begin{equation}
\tilde{\kappa}_{\gamma,m}^2 \asymp \s \frac{i^{2m}\mu_i}{\lambda + \mu_i} = \hat{\kappa}_{\gamma,m}^2 \asymp \sum_{i=1}^{\infty}\frac{i^{-2\alpha}i^{2m}}{\lambda + i^{-2\alpha}} \asymp \lambda^{-\frac{2m+1}{2\alpha}}.
\end{equation}
\end{proof}

\begin{proof}[Proof of Lemma~\ref{thm:minimax.diff.exp}]
Assuming the conditions in Lemma~\ref{lem:approx.error} holds and invoking Lemma~\ref{lem:exp.RKHS.derivative}, it holds with $\PP_0^{(n)}$-probability at least $1 - n^{-10}$ that
\begin{equation}
\|\hat{f}_n^{(k)}-f_0^{(k)}\|_2 \lesssim\|f_\lambda^{(k)}-f_0^{(k)}\|_2 + \tilde\kappa_{\gamma,k} \sqrt{\frac{\log n}{n}}.
\end{equation}
By Lemma~\ref{lem:differentiability.exp} and Lemma~\ref{lem:exp.mismatch.deterministic}, we obtain
\begin{equation}
\|\hat{f}_n^{(k)}-f_0^{(k)}\|_2 \lesssim \lambda^{\frac{\gamma_0}{2\gamma} + \frac{\epsilon}{2}}
+(-\log\lambda)^{k+\frac 12}\sqrt{\frac{\log n}{n}}.
\end{equation}
for any arbitrary small $\epsilon > 0$.

Taking $\gamma_0 = \gamma$, the preceding display does not have a closed-form minimizer for $\lambda$. We take $\lambda\asymp \log n/n$, which satisfies $\tilde{\kappa}_\gamma^2\asymp \log (n/\log n)=o(\sqrt{n/\log n})$ and $\|f_\lambda - f_0\|_\infty = o(1)$, i.e., the conditions in Lemma~\ref{lem:approx.error} are satisfied. The proof is completed by substituting $\lambda$.
\end{proof}

\begin{proof}[Proof of Lemma~\ref{thm:minimax.diff.matern}]
By combining Lemma~\ref{lem:approx.error}, Lemma 12 in \cite{liu2020non}, Lemma~\ref{lem:differentiability.matern} and Lemma~\ref{lem:poly.mismatch.deterministic}, we obtain with $\PP_0^{(n)}$-probability at least $1-n^{-10}$ there holds 
\begin{equation}
\|\hat f_n^{(k)} - f_0^{(k)}\|_2 \lesssim \lambda^{\frac{\alpha_0-k}{2\alpha}} + \lambda^{-\frac{2k+1}{4\alpha}} \sqrt{\frac{\log n}{n}}.
\end{equation}
Let $\alpha_0 = \alpha$ and the preceding upper bound is minimized with $\lambda\asymp ({\log n}/{n})^{\frac{2\alpha}{2\alpha+1}}$.
\end{proof}

\begin{proof}[Proof of Lemma~\ref{lem:exp.mismatch}]
Following the proof of Lemma~\ref{thm:minimax.diff.exp}, it holds that
\begin{equation}
\|\hat f_n^{(k)} - f_0^{(k)}\|_2 \lesssim \lambda^{\frac{\gamma_0}{2\gamma}+\frac{\epsilon}{2}} + (-\log\lambda)^{k+\frac{1}{2}} \sqrt{\frac{\log n}{n}},
\end{equation}
where $\epsilon>0$ is an arbitrarily small number. Noting that $\hat\lambda_n \gtrsim n^{-\frac{\gamma}{\gamma_0}}$ and $ \hat\lambda_n \lesssim (n/\log n)^{-\frac{\gamma}{\gamma_0}}$ given in Theorem~\ref{thm:lambda.mmle}, we obtain that
\begin{equation}
\|\hat f_n^{(k)} - f_0^{(k)}\|_2 \lesssim \frac{(\log n)^{k+1}}{\sqrt{n}}.
\end{equation}
\end{proof}

\begin{proof}[Proof of Lemma~\ref{lem:poly.mismatch}]
Based on the proof of Lemma~\ref{thm:minimax.diff.matern}, we have
\begin{equation}
\|\hat f_n^{(k)} - f_0^{(k)}\|_2 \lesssim \lambda^{\frac{\alpha_0-k}{2\alpha}} + \lambda^{-\frac{2k+1}{4\alpha}} \sqrt{\frac{\log n}{n}}.
\end{equation}
Substituting $\lambda = \hat \lambda_n \asymp n^{-\frac{2\alpha}{2\alpha_0+1}}$ given in Theorem~\ref{thm:lambda.mmle}, the preceding inequality becomes
\begin{equation}
\|\hat f_n^{(k)} - f_0^{(k)}\|_2 \lesssim n^{-\frac{\alpha_0-k}{2\alpha_0+1}} \sqrt{\log n}.
\end{equation}
\end{proof}

\subsection{Auxiliary technical results}

\begin{lemma}\label{lem:entropy.integral.deriv}
Under the conditions of Theorem~\ref{thm:deriv.contraction.l2}, it holds with $\PP_0^{(n)}$-probability at least $1 - n^{-10}$ that
\begin{equation}
\EE\left(\|f^{(k)}-\hat{f}^{(k)}_n\|_\infty \mid \data\right) \lesssim \tilde{\kappa}_{k}\sqrt{\frac{\log n}{n}}.
\end{equation}
\end{lemma}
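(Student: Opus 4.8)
The plan is to bound the conditional expectation $\EE\|f^{(k)}-\hat f^{(k)}_n\|_\infty$, taken over the centered Gaussian process $(f^{(k)}-\hat f^{(k)}_n)\mid\data\sim\GP(0,\tilde V^k_n)$, by Dudley's entropy integral in the intrinsic pseudo-metric $\tilde\rho$, where $\tilde\rho(x,x')^2=\tilde V^k_n(x,x)+\tilde V^k_n(x',x')-2\tilde V^k_n(x,x')$. Writing $Z_x=f^{(k)}(x)-\hat f^{(k)}_n(x)$, one has $\EE\|Z\|_\infty\le\EE|Z_{x_0}|+\EE\sup_{x,x'}(Z_x-Z_{x'})$ for any fixed $x_0$; the first term is at most $\sqrt{\|\tilde V^k_n\|_\infty}$, and the second is bounded by a universal constant times $\int_0^{D}\sqrt{\log N(\epsilon,[0,1],\tilde\rho)}\,d\epsilon$, where $D=\operatorname{diam}_{\tilde\rho}([0,1])$. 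So the proof reduces to (i) a bound on the $\tilde\rho$-diameter, (ii) a bound on the $\tilde\rho$-covering numbers, and then to checking that the resulting integral costs only a logarithmic factor over the diameter scale.

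For (i), since $\tilde\rho(x,x')^2\le 4\|\tilde V^k_n\|_\infty$, Lemma~\ref{thm:equivalent.sigma.deriv} gives, on an event of $\PP_0^{(n)}$-probability at least $1-n^{-10}$, $\|\tilde V^k_n\|_\infty\le 2\sigma^2\tilde\kappa_k^2/n$, hence $D\lesssim\tilde\kappa_k/\sqrt n$ on this event; this also renders $\EE|Z_{x_0}|\le\sqrt{\|\tilde V^k_n\|_\infty}$ negligible relative to the claimed rate.

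For (ii) I would derive a Lipschitz bound $\tilde\rho(x,x')\le B_n|x-x'|$ with $B_n$ of polynomial order in $n$, holding \emph{deterministically} in the data. The key observation is that $R(x,x'):=\sigma^2(n\lambda)^{-1}K(x,x')-\hat V_n(x,x')=\sigma^2(n\lambda)^{-1}K(x,X)[K(X,X)+n\lambda I_n]^{-1}K(X,x')$ is a positive semidefinite kernel, and so is its mixed derivative $\partial^k_x\partial^k_{x'}R$ (it has the same finite-rank form with $K_{k0}(x,X)$ and $K_{0k}(X,x')$). Consequently the $\tilde\rho$-increments are dominated by those of $\sigma^2(n\lambda)^{-1}\partial^k_x\partial^k_{x'}K$; expanding the latter termwise by Mercer's theorem (legitimate under Assumption (A) since $\sum_i\mu_i i^{2k}\le(\lambda+\mu_1)\hat\kappa_k^2<\infty$ forces uniform convergence of the differentiated series) and invoking Assumption (B),
\[
\tilde\rho(x,x')^2\le\frac{\sigma^2}{n\lambda}\sum_{i=1}^\infty\mu_i\big(\phi^{(k)}_i(x)-\phi^{(k)}_i(x')\big)^2\le\frac{\sigma^2 L_{k,\phi}^2}{n\lambda}\,|x-x'|^2\sum_{i=1}^\infty\mu_i i^{2(k+1)}.
\]
Since $\sum_i\mu_i i^{2(k+1)}\le(\lambda+\mu_1)\hat\kappa_{k+1}^2\lesssim n$ by Assumption (C) (with $\lambda+\mu_1=O(1)$), we get $\tilde\rho(x,x')\lesssim\lambda^{-1/2}|x-x'|=:B_n|x-x'|$, hence $N(\epsilon,[0,1],\tilde\rho)\le 1+B_n/\epsilon$ for every $\epsilon>0$.

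Putting the pieces together, $\int_0^{D}\sqrt{\log(1+B_n/\epsilon)}\,d\epsilon\le D\sqrt{\log(1+B_n/D)}\int_0^1\sqrt{1+\log(1/u)}\,du\lesssim D\sqrt{\log(B_n/D)}$; with $D\asymp\tilde\kappa_k/\sqrt n$ and $B_n/D\asymp\sqrt{n/(\lambda\tilde\kappa_k^2)}$ at most a fixed power of $n$ under Assumption (C), we have $\log(B_n/D)\lesssim\log n$, and therefore $\EE\|f^{(k)}-\hat f^{(k)}_n\|_\infty\lesssim D\sqrt{\log n}\asymp\tilde\kappa_k\sqrt{\log n/n}$ on the event of Lemma~\ref{thm:equivalent.sigma.deriv}. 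The step I expect to be the main obstacle is (ii): producing the Lipschitz-in-Euclidean-distance bound for $\tilde\rho$ with a polynomial-in-$n$ constant. This is exactly where Assumption (B) and the condition $\hat\kappa_{k+1}^2=O(n)$ in Assumption (C) enter; the $\tilde\kappa_k/\sqrt n$ scaling is then furnished entirely by the diameter bound of Lemma~\ref{thm:equivalent.sigma.deriv}, while the polynomial growth of $B_n$ and of $1/D$ costs only the unavoidable $\sqrt{\log n}$ factor.
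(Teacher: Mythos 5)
Your overall architecture coincides with the paper's: Dudley's entropy integral for the centered posterior GP, the diameter bound $\tilde{\rho}([0,1])\lesssim\tilde{\kappa}_{k}/\sqrt{n}$ inherited from Lemma~\ref{thm:equivalent.sigma.deriv}, and a modulus-of-continuity bound for $\tilde{\rho}$ driven by Assumption (B) and the condition $\hat{\kappa}_{k+1}^2=O(n)$. Where you genuinely diverge is step (ii). You discard the data-dependent part of $\tilde{V}^k_n$ by observing that $\partial^k_x\partial^k_{x'}\bigl(K(x,X)[K(X,X)+n\lambda I_n]^{-1}K(X,x')\bigr)$ is positive semidefinite, so the $\tilde{\rho}$-increments are dominated by those of the prior covariance $\sigma^2(n\lambda)^{-1}K_{kk}$; this gives a clean, deterministic Lipschitz bound $\tilde{\rho}(x,x')\lesssim\lambda^{-1/2}|x-x'|$ and avoids any appeal to concentration in this step. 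The paper instead keeps the data-dependent part, writes $\sigma^{-2}n\lambda\tilde{\rho}(x,x')^2$ as an evaluation of the noise-free KRR bias of $g_{0k}=K_{0k,x}-K_{0k,x'}$, and controls it in the equivalent RKHS norm via Lemma~\ref{cor:h.tilde.noiseless}: $\sigma^{-2}n\lambda\tilde{\rho}(x,x')^2\le 4\tilde{\kappa}_{k}\|L_{\tilde K}g_{0k}-g_{0k}\|_{\tilde{\bbH}}\le 4\lambda L_{k,\phi}\tilde{\kappa}_{k}\hat{\kappa}_{k+1}|x-x'|$. The factors of $\lambda$ cancel, yielding $\tilde{\rho}(x,x')\lesssim\sqrt{\tilde{\kappa}_{k}\hat{\kappa}_{k+1}/n}\,|x-x'|^{1/2}$ with an $O(1)$ constant since $\tilde{\kappa}_{k}\hat{\kappa}_{k+1}\le\hat{\kappa}_{k+1}^2=O(n)$, hence $\log N(\epsilon,[0,1],\tilde{\rho})\lesssim\log(1/\epsilon)$ with no residual $\lambda$-dependence.

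This difference matters at exactly the point you flagged as the main obstacle. Your final step needs $B_n/D\asymp\sqrt{n/(\lambda\tilde{\kappa}_{k}^2)}$ to be at most a fixed power of $n$, and you assert this holds "under Assumption (C)." But Assumption (C) constrains $\tilde{\kappa}^2$, $\tilde{\kappa}_{k}^2$, and $\hat{\kappa}_{k+1}^2$, not $\lambda$ itself: if the eigenvalues decay very rapidly (say doubly exponentially), $\lambda$ can be super-polynomially small while $\hat{\kappa}_{k+1}^2=\sum_i i^{2k+2}\mu_i/(\lambda+\mu_i)$ remains $O(n)$, because the condition only pins down the index at which $\mu_i$ crosses $\lambda$, not $\lambda$. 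In that regime your entropy integral produces $D\sqrt{\log(1/\lambda)}$ rather than $D\sqrt{\log n}$, and the lemma as stated would not follow. So your proof is correct under the additional (mild, and satisfied in both of the paper's examples, where $\lambda\asymp\log n/n$ or $(\log n/n)^{2\alpha/(2\alpha+1)}$) hypothesis that $\lambda^{-1}$ grows at most polynomially in $n$; to recover the lemma in the generality claimed you would need to replace your Lipschitz bound by something whose constant does not degrade with $\lambda$, which is precisely what the paper's RKHS-norm route accomplishes. The remaining ingredients of your argument — the psd domination, the termwise differentiation of the Mercer expansion justified by $\sum_i\mu_i i^{2k+2}\le(\lambda+\mu_1)\hat{\kappa}_{k+1}^2<\infty$, and the diameter computation — are sound.
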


\begin{lemma}\label{lem:exp.RKHS.derivative}
Let the RKHS induced by $K_\gamma$ and $\tilde{K}_\gamma$ be $\bbH_\gamma$ and $\tilde{\bbH}_\gamma$, respectively. If $\gamma>1/2$, then $f\in C^k[0,1]$ for any $k\in\mathbb{N}_0$ and $f\in\bbH_\gamma$. Moreover, these exists a constant $C>0$ that does not depend on $\lambda$ such that $\|f^{(k)}\|_2\leq C\tilde{\kappa}_\gamma^{-1}\tilde{\kappa}_{\gamma,k}\|f\|_{\tilde{\bbH}_\gamma}$ for any $f\in\bbH_\gamma$.
\end{lemma}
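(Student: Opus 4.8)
The plan is to reduce everything to the coefficient representation in the Fourier basis $\{\psi_i\}_{i=1}^\infty$ and then to read off the constants from the effective-dimension estimates of Lemma~\ref{lem:differentiability.exp}. Write $f=\sum_{i=1}^\infty f_i\psi_i$ with $f_i=\langle f,\psi_i\rangle_2$, so that $f\in\bbH_\gamma$ means $\|f\|_{\bbH_\gamma}^2=\sum_i f_i^2/\mu_i<\infty$, and recall $\|f\|_{\tilde{\bbH}_\gamma}^2=\|f\|_2^2+\lambda\|f\|_{\bbH_\gamma}^2=\sum_i f_i^2/\nu_i$ with $\nu_i=\mu_i/(\lambda+\mu_i)$. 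For the membership claim $\bbH_\gamma\subset C^m[0,1]$ I would first note that Assumption~(A) for the Fourier basis gives $\|\psi_i^{(j)}\|_\infty\lesssim i^j$, so Cauchy--Schwarz yields $\sum_i|f_i|\,\|\psi_i^{(j)}\|_\infty\lesssim\|f\|_{\bbH_\gamma}\,(\sum_i\mu_i i^{2j})^{1/2}<\infty$ for every $j$, since the exponential decay $\mu_i\asymp e^{-2\gamma i}$ dominates any polynomial. Thus $\sum_i f_i\psi_i^{(j)}$ converges uniformly for each $j\le m$, and applying the term-by-term differentiation theorem inductively in $j$ gives $f\in C^m[0,1]$ with $f^{(m)}=\sum_i f_i\psi_i^{(m)}$.

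For the norm bound the key observation is that differentiation acts on the Fourier basis as a block rotation: it sends $(\psi_{2i},\psi_{2i+1})$ to $2\pi i\,(-\psi_{2i+1},\psi_{2i})$ and kills $\psi_1$. Consequently $\{\psi_i^{(m)}\}_i$ is a pairwise $L_2$-orthogonal system with $\|\psi_i^{(m)}\|_2^2\le(2\pi)^{2m}i^{2m}$, so Parseval gives
\begin{equation}
\|f^{(m)}\|_2^2=\sum_i\|\psi_i^{(m)}\|_2^2\,f_i^2\le(2\pi)^{2m}\sum_i i^{2m}\nu_i\cdot\frac{f_i^2}{\nu_i}\le(2\pi)^{2m}\Big(\sup_i i^{2m}\nu_i\Big)\|f\|_{\tilde{\bbH}_\gamma}^2.
\end{equation}
The proof therefore reduces to the single inequality $\sup_i i^{2m}\nu_i\le C'\,\tilde{\kappa}_\gamma^{-2}\tilde{\kappa}_{\gamma,m}^2$ with $C'$ independent of $\lambda\in(0,1)$.

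To prove this last inequality I would split on the size of $\lambda$. Fix $\lambda_0\in(0,1)$ small enough that the asymptotics in Lemma~\ref{lem:differentiability.exp} hold with explicit constants; on $[\lambda_0,1)$ each of $\sup_i i^{2m}\nu_i$, $\tilde{\kappa}_\gamma^2$ and $\tilde{\kappa}_{\gamma,m}^2$ is continuous in $\lambda$ and bounded above and away from zero, so the inequality is immediate for some constant. For $\lambda<\lambda_0$, Lemma~\ref{lem:differentiability.exp} gives $\tilde{\kappa}_\gamma^{-2}\tilde{\kappa}_{\gamma,m}^2\asymp(-\log\lambda)^{2m}$, so it suffices to show $\sup_i i^{2m}\nu_i\lesssim(-\log\lambda)^{2m}$. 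Letting $N_\lambda\asymp-\log\lambda$ be the largest index with $\mu_i\ge\lambda$, one bounds $i^{2m}\nu_i\le N_\lambda^{2m}$ for $i\le N_\lambda$ using $\nu_i\le1$, while for $i>N_\lambda$ one uses $\nu_i\le\mu_i/\lambda\lesssim e^{-2\gamma i}/\lambda$ and writes $i=N_\lambda+s$ with $e^{-2\gamma N_\lambda}\asymp\lambda$, so that $i^{2m}\nu_i\lesssim(N_\lambda+s)^{2m}e^{-2\gamma s}\lesssim N_\lambda^{2m}+\sup_{s\ge0}s^{2m}e^{-2\gamma s}\lesssim N_\lambda^{2m}$. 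Combining the two regimes with the display above completes the proof.

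The main obstacle is precisely this last step: the naive bound $\sup_i i^{2m}\nu_i\le\sum_i i^{2m}\nu_i=\hat{\kappa}_{\gamma,m}^2\asymp\tilde{\kappa}_{\gamma,m}^2$ is too weak by the factor $\tilde{\kappa}_\gamma^2\asymp-\log\lambda$, so one genuinely needs to locate the maximizing index ($i\asymp-\log\lambda$) and, separately, to treat $\lambda$ bounded away from $0$, where the logarithmic asymptotics of Lemma~\ref{lem:differentiability.exp} degenerate. By contrast, the differentiability statement $\bbH_\gamma\subset C^m[0,1]$ and the reduction to $\sup_i i^{2m}\nu_i$ in the second paragraph are routine.
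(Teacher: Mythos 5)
Your proposal is correct and follows essentially the same route as the paper: both reduce the norm bound to the termwise inequality $\sup_i i^{2m}\mu_i/(\lambda+\mu_i)\lesssim(-\log\lambda)^{2m}\asymp\tilde{\kappa}_\gamma^{-2}\tilde{\kappa}_{\gamma,m}^2$ and prove it by splitting at $i\asymp-\log\lambda$. The only differences are cosmetic refinements on your side — you prove $\bbH_\gamma\subset C^m[0,1]$ by direct uniform convergence rather than citing Corollary 4.36 of Steinwart and Christmann, your tail bound $\nu_i\lesssim e^{-2\gamma(i-N_\lambda)}$ is slightly sharper than the paper's $\lambda\ge e^{-i}$ device (which is where the hypothesis $\gamma>1/2$ enters their argument), and you explicitly handle $\lambda$ bounded away from $0$, a regime the paper passes over silently.
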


\begin{lemma}\label{lem:exp.mismatch.deterministic}
Suppose $f_0\in A^{\gamma_0}[0,1]$ and the kernel is chosen to be $K_\gamma$ for $\gamma \geq \gamma_0 > 0$. Then it holds
\begin{equation}
\|f_\lambda - f_0\|_\infty \lesssim \lambda^{\frac{\gamma_0}{2\gamma}}.
\end{equation}
Furthermore, for any $k\in\mathbb{N}_0$ and an arbitrarily small $\epsilon>0$ it holds
\begin{equation}
\|f_\lambda^{(k)}-f_0^{(k)}\|_2 \lesssim \lambda^{\frac{\gamma_0}{2\gamma} + \frac{\epsilon}{2}}.
\end{equation}
\end{lemma}

\begin{lemma}\label{lem:poly.mismatch.deterministic}
Suppose $f_0\in W^{\alpha_0}[0,1]$ or $f_0\in H^{\alpha_0}[0,1]$ and the kernel is chosen to be $K_\alpha$ for $\alpha \geq \alpha_0 > 1/2$. Then it holds that
\begin{equation}
\|f_\lambda - f_0\|_\infty \lesssim \lambda^{\frac{\alpha_0}{2\alpha}}.
\end{equation}
Furthermore, for any $k\in\mathbb{N}_0$ such that $\alpha \geq \alpha_0 > k+1/2$ it holds that
\begin{equation}
\|f_\lambda^{(k)} - f_0^{(k)}\|_2 \lesssim \lambda^{\frac{\alpha_0-k}{2\alpha}}.
\end{equation}
\end{lemma}

\begin{lemma}\label{cor:h.tilde.noiseless}
Suppose the observations are noiseless. Under Assumption (A), by choosing $\lambda$ such that $\tilde{\kappa}^2=o(\sqrt{n/\log n})$, it holds with $\PP_0^{(n)}$-probability at least $1 - n^{-10}$ that
\begin{equation}
\|\hat{f}_n-f_0\|_{\tilde{\bbH}}\leq 2\|f_\lambda-f_0\|_{\tilde{\bbH}}.
\end{equation}
\end{lemma}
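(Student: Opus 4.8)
The plan is to decompose $\hat{f}_n-f_0=(\hat{f}_n-f_\lambda)+(f_\lambda-f_0)$ and to show that $\|\hat{f}_n-f_\lambda\|_{\tilde{\bbH}}\le\|f_\lambda-f_0\|_{\tilde{\bbH}}$ on an event of probability at least $1-n^{-10}$; the triangle inequality then yields the assertion. In the noiseless case $\hat{f}_n=(L_{K,X}+\lambda I)^{-1}L_{K,X}f_0$ by the same RKHS algebra that underlies~\eqref{eq:f.hat.deriv}, so $\hat{f}_n$ and $f_\lambda=(L_K+\lambda I)^{-1}L_Kf_0$ solve, as identities in $\bbH$, the equations $(L_{K,X}+\lambda I)\hat{f}_n=L_{K,X}f_0$ and $(L_K+\lambda I)f_\lambda=L_Kf_0$. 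Subtracting these and using $\lambda f_\lambda=L_K(f_0-f_\lambda)$ gives the key identity
$$\hat{f}_n-f_\lambda=(L_{K,X}+\lambda I)^{-1}(L_{K,X}-L_K)(f_0-f_\lambda).$$

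Next I would invoke the isometry $\|g\|_{\tilde{\bbH}}=\|(L_K+\lambda I)^{1/2}g\|_{\bbH}$, which is immediate from the eigen-expansion since $\|g\|_2^2+\lambda\|g\|_{\bbH}^2=\sum_i g_i^2(\mu_i+\lambda)/\mu_i$. Applying it to the key identity and inserting $I=(L_K+\lambda I)^{-1/2}(L_K+\lambda I)^{1/2}$ on each side of $(L_{K,X}-L_K)$ gives
$$\|\hat{f}_n-f_\lambda\|_{\tilde{\bbH}}\le\|B\|_{\mathrm{op}}\,\|E\|_{\mathrm{op}}\,\|f_0-f_\lambda\|_{\tilde{\bbH}},$$
where $E=(L_K+\lambda I)^{-1/2}(L_{K,X}-L_K)(L_K+\lambda I)^{-1/2}$, $B=(L_K+\lambda I)^{1/2}(L_{K,X}+\lambda I)^{-1}(L_K+\lambda I)^{1/2}$, and $\|\cdot\|_{\mathrm{op}}$ is the operator norm on $\bbH$. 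It therefore suffices to show $\|B\|_{\mathrm{op}}\|E\|_{\mathrm{op}}\le1$ with probability at least $1-n^{-10}$. Since $E$ is self-adjoint and $B^{-1}=I+E$, once $\|E\|_{\mathrm{op}}\le\theta_n<1$ we get $I+E\succeq(1-\theta_n)I$ and hence $\|B\|_{\mathrm{op}}\le(1-\theta_n)^{-1}$, so the whole problem reduces to bounding $\|E\|_{\mathrm{op}}$.

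To control $\|E\|_{\mathrm{op}}$, write $L_{K,X}=\frac1n\sum_{i=1}^n\langle\cdot,K_{X_i}\rangle_{\bbH}K_{X_i}$, so that $E$ is the centered average of i.i.d.\ self-adjoint rank-one operators, each of operator norm at most $\|(L_K+\lambda I)^{-1/2}K_{X_i}\|_{\bbH}^2=\tilde{K}(X_i,X_i)\le\tilde{\kappa}^2$; here Assumption (A) enters through $\tilde{\kappa}^2=\sup_x\tilde{K}(x,x)\lesssim\sum_i\mu_i/(\lambda+\mu_i)$. A Bernstein-type concentration inequality for sums of independent self-adjoint operators on a Hilbert space, as in the operator-theoretic learning literature~\citep{smale2007learning}, then gives, with probability at least $1-n^{-10}$,
$$\|E\|_{\mathrm{op}}\lesssim\sqrt{\frac{\tilde{\kappa}^2\log n}{n}}+\frac{\tilde{\kappa}^2\log n}{n}=:\theta_n,$$
and the hypothesis $\tilde{\kappa}^2=o(\sqrt{n/\log n})$ forces $\theta_n\to0$, whence $\theta_n\le1/2$ for all large $n$. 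On this event $\|B\|_{\mathrm{op}}\|E\|_{\mathrm{op}}\le\theta_n/(1-\theta_n)\le1$, so $\|\hat{f}_n-f_\lambda\|_{\tilde{\bbH}}\le\|f_0-f_\lambda\|_{\tilde{\bbH}}$ and therefore $\|\hat{f}_n-f_0\|_{\tilde{\bbH}}\le2\|f_\lambda-f_0\|_{\tilde{\bbH}}$.

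The main obstacle is the concentration step for $\|E\|_{\mathrm{op}}$: obtaining the correct dependence on the effective dimension $\tilde{\kappa}^2$ together with the $\log n$ factor coming from the $1-n^{-10}$ confidence level, and checking that the resulting $\theta_n$ falls below $1/2$ under exactly the stated condition on $\lambda$. Everything else is bookkeeping with the two operator identities and the isometry $\|\cdot\|_{\tilde{\bbH}}=\|(L_K+\lambda I)^{1/2}\cdot\|_{\bbH}$. One mild point to handle is that $f_0$ need not lie in $\bbH$, but this is harmless: $L_{K,X}f_0$ and $f_0-f_\lambda=-\lambda(L_K+\lambda I)^{-1}f_0$ are still well defined, and if $\|f_\lambda-f_0\|_{\tilde{\bbH}}=\infty$ the claimed inequality is vacuous.
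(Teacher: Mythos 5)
Your proposal is correct, and it is essentially a self-contained reconstruction of the result that the paper simply imports: the paper's proof is two lines long, citing the noise-free case ($\sigma=0$) of the bound \eqref{eq:KRR.thm2} from the proof of Theorem~2 in \cite{liu2020non} to get $\|\hat f_n - f_\lambda\|_{\tilde\bbH}\leq \tilde\kappa^{-1}C(n,\tilde\kappa)(1-C(n,\tilde\kappa))^{-1}\|f_\lambda-f_0\|_\infty$, then using $C(n,\tilde\kappa)\leq 1/2$ and the embedding $\|f_\lambda-f_0\|_\infty\leq\tilde\kappa\|f_\lambda-f_0\|_{\tilde\bbH}$ from \eqref{eq:sup.RKHS.norm} before applying the triangle inequality. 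Your route instead proves the key bound from scratch via the identity $\hat f_n-f_\lambda=(L_{K,X}+\lambda I)^{-1}(L_{K,X}-L_K)(f_0-f_\lambda)$, the isometry $\|\cdot\|_{\tilde\bbH}=\|(L_K+\lambda I)^{1/2}\cdot\|_\bbH$, and the whitened empirical operator $E$; this is precisely the Smale--Zhou-style machinery underlying the cited result, so the mathematical content is the same, but your version is transparent about where the hypothesis on $\tilde\kappa^2$ enters and avoids routing the comparison through the $L_\infty$ norm. One small caveat: the concentration rate you claim for $\|E\|_{\mathrm{op}}$, namely $\sqrt{\tilde\kappa^2\log n/n}+\tilde\kappa^2\log n/n$, is sharper than what the Hilbert--Schmidt Bernstein bound in \cite{smale2007learning} actually delivers (it gives $\theta_n\asymp\tilde\kappa^2\sqrt{\log n/n}$, matching the paper's $C(n,\tilde\kappa)$); the sharper form requires an intrinsic-dimension operator Bernstein inequality. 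This does not affect your conclusion, since the cruder bound already tends to zero exactly under the stated condition $\tilde\kappa^2=o(\sqrt{n/\log n})$ --- which is presumably why the hypothesis takes that form --- but you should either cite the refined inequality or fall back on the Hilbert--Schmidt version. Your handling of the possibility $f_0\notin\bbH$ (the inequality is vacuous when $\|f_\lambda-f_0\|_{\tilde\bbH}=\infty$) is a point the paper glosses over and is worth keeping.
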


\begin{lemma}\label{lem:finite.sum}
\begin{enumerate}[(a)]
\item For the kernel $K_\gamma$, it holds with $\PP_0^{(n)}$-probability at least $1-n^{-10}$ that
\begin{equation}
\sum_{i=1}^n \frac{u_i}{u_i + n\lambda} \asymp -\log\lambda,
\end{equation}
if the regularization parameter satisfies $\lambda \gtrsim e^{-\sqrt{n/\log n}}$.
\item For the kernel $K_\alpha$, it holds with $\PP_0^{(n)}$-probability at least $1-n^{-10}$ that
\begin{equation}
\sum_{i=1}^n \frac{u_i}{u_i + n\lambda} \asymp \lambda^{-\frac{1}{2\alpha}},
\end{equation}
if the regularization parameter satisfies $\lambda \gtrsim (n/\log n)^{-\alpha + \frac{1}{2}}$.
\end{enumerate}
\end{lemma}

\subsection{Proofs of auxiliary technical results}

\begin{proof}[Proof of Lemma~\ref{lem:entropy.integral.deriv}]
Define a centered GP posterior $\tilde{Z}^k$ as
\begin{equation}\label{eq:centered.GP.tilde}
\tilde{Z}^k=(f^{(k)}-\hat{f}^{(k)}_n)\mid \data\sim {\rm GP}(0, \tilde{V}_n^k),
\end{equation}
with psuedo-metric $\tilde{\rho}(x,x')=\sqrt{{\rm Var}(\tilde{Z}^k(x)-\tilde{Z}^k(x'))}$ for $x,x'\in[0,1]$. By Dudley’s entropy integral Theorem,
\begin{equation}
\EE \left( \|f^{(k)}-\hat{f}^{(k)}_n\|_\infty \mid \data \right) \leq \int_0^{\tilde{\rho}([0,1])} \sqrt{\log N(\epsilon,[0,1],\tilde{\rho})}d\epsilon.
\end{equation}
Recall that in Equation~\eqref{eq:tilde.variance} the posterior covariance can be expressed as the bias of a noise-free KRR estimator:
\begin{equation}
\sigma^{-2}n\lambda \tilde{V}^k_n(x,x')=\partial^k_x(K_{0k,x'}-\hat{K}_{0k,x'})(x).
\end{equation}
Hence, we have
\begin{align}
\tilde{\rho}(x,x')^2&={\rm Var}(\tilde{Z}^k(x),\tilde{Z}^k(x'))=\tilde{V}^k_n(x,x)+\tilde{V}^k_n(x',x')-2\tilde{V}^k_n(x,x')\\
&= \sigma^2 (n \lambda)^{-1} \{\partial^k_xK_{0k,x}(x) - \partial^k_x\hat{K}_{0k,x}(x)+\partial^k_{x'}K_{0k,x'}(x') - \partial^k_{x'}\hat{K}_{0k,x'}(x')\\
&\qquad\qquad\quad\ -2\partial^k_xK_{0k,x'}(x) +2\partial^k_x \hat{K}_{0k,x'}(x)\}\\
&= \sigma^2 (n \lambda)^{-1} \{\partial^k_x(K_{0k,x}-K_{0j,x'})(x) - \partial^k_{x'}(K_{0k,x}-K_{0k,x'})(x') \\
&\qquad\qquad\quad\ - \partial^k_x(\hat{K}_{0k,x}-\hat{K}_{0k,x'})(x)+ \partial^k_{x'}(\hat{K}_{0k,x}-\hat{K}_{0k,x'})(x')\}.
\end{align}
Let $g_{0k}=K_{0k,x}-K_{0k,x'}$ and $\hat{g}_{0k}=\hat{K}_{0k,x}-\hat{K}_{0k,x'}$. Then the preceding display implies
\begin{equation}
\sigma^{-2}n\lambda\tilde{\rho}(x,x')^2=\partial^k_x(g_{0k}-\hat{g}_{0k})(x)-\partial^k_{x'}(g_{0k}-\hat{g}_{0k})(x').
\end{equation}
By Lemma~\ref{cor:h.tilde.noiseless}, with $\PP_0^{(n)}$-probability at least $1 - n^{-10}$ it holds that
\begin{equation}\label{eq:rho.decomposition.tilde}
\sigma^{-2}n\lambda\tilde{\rho}(x,x')^2 \leq 2\|\partial^k(\hat{g}_{0k}-g_{0k})\|_\infty
\leq 2\tilde{\kappa}_{k}\|\hat{g}_{0k}-g_{0k}\|_{\tilde{\bbH}} \leq 4\tilde{\kappa}_{k}\|L_{\tilde{K}}g_{0k}-g_{0k}\|_{\tilde{\bbH}}.
\end{equation}
Substituting $g_{0k}=\sum_{i=1}^{\infty}\mu_i(\phi^{(k)}_i(x)-\phi^{(k)}_i(x'))\phi_i$ yields
\begin{equation}\label{eq:rho.euclid.tilde}
\begin{split}
\|L_{\tilde{K}}g_{0k}-g_{0k}\|_{\tilde{\bbH}}^2&=  \sum_{i=1}^{\infty}\left(\frac{\lambda\mu_i(\phi^{(k)}_i(x)-\phi^{(k)}_i(x'))}{\lambda+\mu_i}\right)^2\bigg/\frac{\mu_i}{\lambda+\mu_i}\\
&\leq \lambda^2 L_{k,\phi}^2 |x-x'|^{2} \s \frac{i^{2k+2}\mu_i}{\lambda+\mu_i}\\
&\leq  L_{k,\phi}^2 \lambda^2 \hat{\kappa}_{k+1}^2 |x-x'|^{2}.
\end{split}
\end{equation}
It follows from Equations~\eqref{eq:rho.decomposition.tilde} and \eqref{eq:rho.euclid.tilde} that
\begin{equation}\label{eq:tilde.rho}
\tilde{\rho}(x,x')\leq C\sqrt{\tilde{\kappa}_{k}\hat{\kappa}_{k+1}} n^{-1/2} |x-x'|^{1/2}.
\end{equation}
By the inequality in \citet[p.~529]{ghosal2017fundamentals}, we have
\begin{equation}
N(\epsilon,[0,1],\tilde{\rho})\leq N\left(\left(C^{-1}\epsilon\sqrt{n/\tilde{\kappa}_{k}\hat{\kappa}_{k+1}}\right)^{2},[0,1],|\cdot|\right)\lesssim \frac{1}{(\epsilon\sqrt{n/\tilde{\kappa}_{k}\hat{\kappa}_{k+1}})^{2}}.
\end{equation}
Since $\tilde{\kappa}_{k}\hat{\kappa}_{k+1} \lesssim \hat{\kappa}_{k}\hat{\kappa}_{k+1} \leq \hat{\kappa}_{k+1}^2 =O(n)$, we obtain $\log N(\epsilon,[0,1],\tilde{\rho})\lesssim \log(1/\epsilon)$. On the other hand, note that
\begin{align}
\|L_{\tilde{K}}g-g\|_{\tilde{\bbH}}^2&=  \sum_{i=1}^{\infty}\left(\frac{\lambda\mu_i(\phi_i^{(k)}(x)-\phi_i^{(k)}(x'))}{\lambda+\mu_i}\right)^2\bigg/\frac{\mu_i}{\lambda+\mu_i}\\
&\leq \sum_{i=1}^{\infty}\frac{2\lambda^2\mu_i(\phi_i^{(k)}(x)^2+\phi_i^{(k)}(x')^2)}{\lambda+\mu_i}\\
&\leq 4\lambda^2\tilde{\kappa}_{k}^2.
\end{align}
The preceding inequality combined with Equation~\eqref{eq:rho.decomposition.tilde} gives that
\begin{equation}
\tilde{\rho}(x,x')^2\leq 4\sigma^2(n\lambda)^{-1}\tilde{\kappa}_{k}\|L_{\tilde{K}}g-g\|_{\tilde{\bbH}}\leq 8\sigma^2 \tilde{\kappa}_{k}^2 /n,
\end{equation}
which implies $\tilde{\rho}([0,1])\lesssim \tilde{\kappa}_{k}/\sqrt{n}$. Therefore,
\begin{equation}
\int_0^{\tilde{\rho}([0,1])} \sqrt{\log N(\epsilon,[0,1],\tilde{\rho})}d\epsilon \lesssim \int_0^{\tilde{\kappa}_{k}/\sqrt{n}} \sqrt{\log(1/\epsilon)}d\epsilon \lesssim \tilde{\kappa}_{k}\sqrt{\frac{\log n}{n}}.
\end{equation}
\end{proof}

\begin{proof}[Proof of Lemma~\ref{lem:exp.RKHS.derivative}]
In view of Corollary 4.36 in \cite{steinwart2008support}, we have $\tilde{K}_\gamma\in C^{2k}([0,1]\times[0,1])$. This implies that $f\in C^k[0,1]$ for any $f \in \tilde{\bbH}_\gamma$, which is also true for $f \in \bbH_\gamma$ since $\bbH_\gamma$ and $\tilde{\bbH}_\gamma$ contain the same functions.

Now we prove the norm inequality. Let $f=\s f_i\psi_i$ where $\{\psi_i\}_{i=1}^\infty$ is the Fourier basis. Then $\|f^{(k)}\|_2^2\asymp \s (f_i i^k)^2$ for any $m\in\mathbb{N}_0$. It is equivalent to showing that
\begin{equation}
\tilde{\kappa}_\gamma^2 \cdot \s f_i^2i^{2k} \leq C\tilde{\kappa}_{\gamma,k}^2 \cdot \s f_i^2\frac{\lambda+\mu_i}{\mu_i},
\end{equation}
for some $C>0$. Hence, it suffices to show that for any $i\in\mathbb{N}$,
\begin{equation}
\tilde{\kappa}_\gamma^2 \cdot f_i^2i^{2k}\leq C \tilde{\kappa}_{\gamma,k}^2 \cdot f_i^2\frac{\lambda+\mu_i}{\mu_i}.
\end{equation}
In view of Lemma~\ref{lem:differentiability.matern}, we have $\tilde{\kappa}_{\gamma,k}^2 \asymp (-\log\lambda)^{2k+1}$ and $\tilde{\kappa}_\gamma^2 \asymp -\log\lambda$. Since $\mu_i\asymp e^{-2\gamma i}$, the preceding inequality becomes
\begin{equation} \label{eq:sufficient} 
i^{2k}\leq C (-\log\lambda)^{2k} (1+\lambda e^{2\gamma i}).
\end{equation}
The above sufficient condition in Equation~\eqref{eq:sufficient} trivially holds when $i\leq -\log \lambda$ by taking $C=1$. When $i > -\log \lambda$, it follows that
\begin{equation}
\frac{i^{2k}}{(-\log\lambda)^{2k} (1+\lambda e^{2\gamma i})} \leq    \frac{i^{2k}}{(-\log\lambda)^{2k} \lambda e^{2\gamma i}} \leq \frac{i^{2k} }{(-\log\lambda)^{2k}e^{-i} e^{2\gamma i}}.
\end{equation}
This is bounded above by some constant $C$ since $\gamma > 1/2$, leading to Equation~\eqref{eq:sufficient}. This completes the proof.
\end{proof}

\begin{proof}[Proof of Lemma~\ref{lem:exp.mismatch.deterministic}]
Note that $f_\lambda^{(k)} - f_0^{(k)} = -\s \frac{\lambda}{\lambda + \mu_i} f_i \psi_i^{(k)}$ and $\{\psi_i^{(k)} / (2\pi i)^k\}_{i=1}^\infty$ is also a Fourier basis. Hence,
\begin{align}
\|f_\lambda^{(k)} - f_0^{(k)}\|_2^2 & = \s \left( \frac{\lambda}{\lambda + \mu_i} f_i (2\pi i)^k \right)^2\\
& \lesssim \s \left( \frac{\lambda}{\lambda + \mu_i} \right)^2  i^{2k} \mu_i^{\frac{\gamma_0}{\gamma}} e^{2\gamma_0 i} f_i^2\\
& \lesssim \s \left( \frac{\lambda}{\lambda + \mu_i} \right)^2  \mu_i^{\frac{\gamma_0}{\gamma}+\epsilon} e^{2\gamma_0 i} f_i^2\\
&\lesssim \lambda^{\frac{\gamma_0}{\gamma}+\epsilon},
\end{align}
where $\epsilon>0$ is an arbitrarily small number.

By Cauchy-Schwarz inequality,
\begin{align}
\|f_\lambda - f_0\|_\infty^2 & \lesssim \left(\s \frac{\lambda}{\lambda + \mu_i} |f_i|\right)^2 = \left(\s \frac{\lambda}{\lambda + \mu_i} e^{-\gamma_0 i} e^{\gamma_0 i} |f_i|\right)^2\\
& \asymp \left(\s \frac{\lambda}{\lambda + \mu_i}  \mu_i^{\frac{\gamma_0}{2\gamma}} e^{\gamma_0 i} |f_i| \right)^2 \leq \s \left( \frac{\lambda}{\lambda + \mu_i}  \mu_i^{\frac{\gamma_0}{2\gamma}} \right)^2 \s e^{2\gamma_0 i} f_i^2 \\
&\lesssim \lambda^{\frac{\gamma_0}{\gamma}}\sum_{i=1}^{\infty} \left(\frac{\lambda}{\mu_i+\lambda}\right)^{2-\frac{\gamma_0}{\gamma}}\left(\frac{\mu_i}{\mu_i+\lambda}\right)^{\frac{\gamma_0}{\gamma}}\\
&\lesssim \lambda^{\frac{\gamma_0}{\gamma}}.
\end{align}
\end{proof}

\begin{proof}[Proof of Lemma~\ref{lem:poly.mismatch.deterministic}]
When $f_0\in W^{\alpha_0}[0,1]$, it follows that
\begin{align}
\|f_\lambda^{(k)} - f_0^{(k)}\|_2^2 & = \s \left( \frac{\lambda}{\lambda + \mu_i} f_i (2\pi i)^k \right)^2 \leq \s \left( \frac{\lambda}{\lambda + \mu_i} \right)^2 i^{2k-2\alpha_0}i^{2\alpha_0} f_i^2\\
& \asymp \s \left( \frac{\lambda}{\lambda + \mu_i} \right)^2  \mu_i^{\frac{\alpha_0-k}{\alpha}} i^{2\alpha_0} f_i^2\\
&=\lambda^{\frac{\alpha_0-k}{\alpha}}\sum_{i=1}^{\infty} \left(\frac{\lambda}{\mu_i+\lambda}\right)^{2-\frac{\alpha_0-k}{\alpha}}\left(\frac{\mu_i}{\mu_i+\lambda}\right)^{\frac{\alpha_0-k}{\alpha}} i^{2\alpha_0} f_i^2\\
&\lesssim \lambda^{\frac{\alpha_0-k}{\alpha}}.
\end{align}
By Cauchy-Schwarz inequality, we have
\begin{align}
\|f_\lambda - f_0\|_\infty^2 & \lesssim \left(\s \frac{\lambda}{\lambda + \mu_i} |f_i|\right)^2 = \left(\s \frac{\lambda}{\lambda + \mu_i} i^{-\alpha_0}i^{\alpha_0} |f_i|\right)^2\\
& \asymp \left(\s \frac{\lambda}{\lambda + \mu_i}  \mu_i^{\frac{\alpha_0}{2\alpha}} i^{\alpha_0} |f_i| \right)^2 \leq \s \left( \frac{\lambda}{\lambda + \mu_i}  \mu_i^{\frac{\alpha_0}{2\alpha}} \right)^2 \s i^{2\alpha_0} f_i^2 \\
&\lesssim \lambda^{\frac{\alpha_0}{\alpha}}\sum_{i=1}^{\infty} \left(\frac{\lambda}{\mu_i+\lambda}\right)^{2-\frac{\alpha_0}{\alpha}}\left(\frac{\mu_i}{\mu_i+\lambda}\right)^{\frac{\alpha_0}{\alpha}}\\
\label{eq:f.lambda.f.0}&\lesssim \lambda^{\frac{\alpha_0}{\alpha}}.
\end{align}
The same results can be shown for $f_0\in H^{\alpha_0}[0,1]$ by noting that $\s i^{2\alpha_0} f_i^2 \leq \left(\s i^{\alpha_0} |f_i|\right)^2$.
\end{proof}

\begin{proof}[Proof of Lemma~\ref{cor:h.tilde.noiseless}]
A noise-free version of Equation~\eqref{eq:KRR.thm2} by substituting $\sigma = 0$  yields that with $\PP_0^{(n)}$-probability at least $1-n^{-10}$,
\begin{equation}
\|\hat{f}_n-f_\lambda\|_{\tilde{\bbH}}\leq \frac{\tilde{\kappa}^{-1}C(n,\tilde{\kappa})}{1-C(n,\tilde{\kappa})}\|f_\lambda-f_0\|_\infty.
\end{equation}
Since $\tilde{\kappa}^2=o(\sqrt{n/\log n})$, for sufficiently large $n$ we have $C(n,\tilde{\kappa})\leq 1/2$, and therefore the preceding inequality becomes $\|\hat{f}_n-f_\lambda\|_{\tilde{\bbH}}\leq \tilde{\kappa}^{-1}\|f_\lambda-f_0\|_\infty$. Then we have
\begin{align}
\|\hat{f}_n-f_0\|_{\tilde{\bbH}}&\leq\|\hat{f}_n-f_\lambda\|_{\tilde{\bbH}}+\|f_\lambda-f_0\|_{\tilde{\bbH}}\\
&\leq \tilde{\kappa}^{-1}\|f_\lambda-f_0\|_\infty+\|f_\lambda-f_0\|_{\tilde{\bbH}}\\
&\leq 2\|f_\lambda-f_0\|_{\tilde{\bbH}},
\end{align}
where the last inequality follows from the fact that $f(x)=\left<f,\tilde{K}_{x}\right>_{\tilde{\bbH}}\leq \|f\|_{\tilde{\bbH}} \|K_{x}\|_{\tilde{\bbH}}= \sqrt{\tilde{K}(x,x)}\|f\|_{\tilde{\bbH}}$.
\end{proof}

\begin{proof}[Proof of Lemma~\ref{lem:finite.sum}]
We first prove part (b). Note that $\mu_i \asymp i^{-2\alpha}$. For sufficiently large $n$, we further have
\begin{equation}
\sum_{i=1}^n \frac{\mu_i}{\mu_i + \lambda} \asymp \sum_{i=1}^n \frac{i^{-2\alpha}}{i^{-2\alpha} + \lambda} = \sum_{i=1}^n \frac{1}{1+i^{2\alpha} \lambda}\asymp \sum_{i=1}^\infty \frac{1}{1+i^{2\alpha} \lambda} \asymp \lambda^{-\frac{1}{2\alpha}}.
\end{equation}
Now we consider
\begin{align}
\bigg| \sum_{i=1}^n \frac{u_i}{u_i + n\lambda} - \sum_{i=1}^n \frac{\mu_i}{\mu_i + \lambda} \bigg| \leq \sum_{i=1}^n \frac{\lambda |u_i - n\mu_i|}{(u_i + n\lambda)(\mu_i + \lambda)} \leq \sum_{i=1}^n \frac{ |u_i/n - \mu_i|}{\mu_i + \lambda}.
\end{align}
According to Theorem 3 and Theorem A.4 in \cite{braun2006accurate} with $\delta = n^{-10}$, for any $1\leq r\leq n$, with $\PP_0^{(n)}$-probability at least $1-n^{-10}$ it holds
\begin{equation}
|u_i/n - \mu_i| \lesssim \mu_i r \sqrt{ \frac{\log r + \log n}{n}} + r^{1-2\alpha}.
\end{equation} 
Let $N = \lfloor n^{\frac{2\alpha-1}{2\alpha}}\rfloor$ and $r = i^{\frac{2\alpha}{2\alpha-1}}$, we have $\sqrt{\log r + \log n} \leq \sqrt{2\log n}$ and
\begin{align}
\sum_{i=1}^N \frac{ |u_i/n - \mu_i|}{\mu_i + \lambda} &\lesssim \sqrt{\frac{\log n}{n}}\sum_{i=1}^N \frac{ i^{-2\alpha+\frac{2\alpha}{2\alpha-1}} }{i^{-2\alpha} + \lambda} + \sum_{i=1}^N \frac{ i^{-2\alpha }}{i^{-2\alpha} + \lambda}\\
&\lesssim \sqrt{\frac{\log n}{n}} \lambda^{-\frac{1}{2\alpha-1}-\frac{1}{2\alpha}} + \lambda^{-\frac{1}{2\alpha}} \lesssim \lambda^{-\frac{1}{2\alpha}},
\end{align}
where the last inequality follows from $\lambda \gtrsim (n/\log n)^{-\alpha + \frac{1}{2}}$. Letting $r = i$, we have
\begin{align}
\sum_{i=N}^n \frac{ |u_i/n - \mu_i|}{\mu_i + \lambda} \lesssim  \sqrt{\frac{\log n}{n}} \sum_{i=N}^n \frac{ i^{1-2\alpha}}{i^{-2\alpha} + \lambda} + \sum_{i=N}^n \frac{ i^{1-2\alpha }}{i^{-2\alpha} + \lambda} = o(1),
\end{align}
where the last equality follows from Cauchy's criterion for convergence for sufficiently large $n$. Hence,
\begin{equation}
\sum_{i=1}^n \frac{u_i}{u_i + n\lambda} \asymp \lambda^{-\frac{1}{2\alpha}}.
\end{equation}

Now we prove part (a). Note that $\mu_i \asymp e^{-2\gamma i}$. For sufficiently large $n$, by Lemma~\ref{lem:differentiability.exp} we have
\begin{equation}
\sum_{i=1}^n \frac{\mu_i}{\mu_i + \lambda} \asymp -\log\lambda.
\end{equation}
According to Theorem 3 and Theorem A.4 in \cite{braun2006accurate} with $\delta = n^{-10}$, for any $1\leq r\leq n$, with $\PP_0^{(n)}$-probability at least $1-n^{-10}$ it holds that
\begin{equation}
|u_i/n - \mu_i| \lesssim \mu_i r \sqrt{ \frac{\log r + \log n}{n}} + e^{-2\gamma r}.
\end{equation} 
Letting $r = i$, we have
\begin{align}
\sum_{i=1}^n \frac{ |u_i/n - \mu_i|}{\mu_i + \lambda} &\lesssim \sqrt{\frac{\log n}{n}}\sum_{i=1}^n \frac{e^{-2\gamma i} i}{e^{-2\gamma i} + \lambda} + \sum_{i=1}^n \frac{ e^{-2\gamma i }}{e^{-2\gamma i} + \lambda}\\
&\lesssim \sqrt{\frac{\log n}{n}} (-\log\lambda)^2 - \log\lambda \lesssim -\log\lambda,
\end{align}
given that $\lambda \gtrsim e^{-\sqrt{n/\log n}}$. Hence,
\begin{equation}
\sum_{i=1}^n \frac{u_i}{u_i + n\lambda} \asymp -\log\lambda.
\end{equation}
\end{proof}

\section{Additional simulation results} \label{sec:simulation1}

In this additional simulation, we compare our plug-in GP estimators with the inverse method proposed in \cite{holsclaw2013gaussian} using one of their simulation designs, where the authors perceived the plug-in estimator as suboptimal. We also compare the empirical Bayes strategy used in the main paper with a fully Bayesian approach by placing inverse Gamma and Gamma priors on $\sigma^2$ and $\lambda$, respectively, which do not show significant differences between the two treatments under the simulation settings. 

We consider the regression function $f_0 (x) = x \sin(x)/10$ given as one simulated example in \cite{holsclaw2013gaussian}. We generate $n = 100$ and 500 data points on a regular grid in $[0, 10]$, and add noise $\varepsilon_i\sim N(0, \sigma^2)$ with $\sigma = 0.1$, 0.2, and 0.3. The goal is to estimate $f_0'$. The implementations of the plug-in GP method and the B-spline prior are the same as in the main paper. For the inverse method, we follow the priors and general setup in \cite{holsclaw2013gaussian}, where a power-exponential kernel with $\alpha = 1.99$ is used.

\begin{table}[ht]
\centering
\renewcommand\arraystretch{1.5}
\caption{RMSE of estimating $f_0'$, averaged over 100 repetitions.  The first four rows are the plug-in GP prior with various kernels (Mat\'ern kernel, squared exponential kernel, second-order Sobolev kernel, and the selected kernel via cross-validation). The fourth and fifth methods are the random series prior using B-splines and the inverse method proposed in \cite{holsclaw2013gaussian}. The last row ``Inverse*'' excludes simulations ``Inverse'' produces zero estimates. Methods with the smallest RMSE in each column are boldfaced. Standard errors are provided in parentheses. \label{table}}
\vspace{0.2in}
\resizebox{\columnwidth}{!}{
\begin{tabular}{lcccccc}
\hline
& \multicolumn{3}{c}{$n = 100$}& \multicolumn{3}{c}{$n = 500$}\\ \cline{2-7} 
& \begin{tabular}[c]{@{}c@{}}$\sigma = 0.1$\end{tabular} & \begin{tabular}[c]{@{}c@{}}$\sigma = 0.2$\end{tabular} & \begin{tabular}[c]{@{}c@{}}$\sigma = 0.3$\end{tabular} & \begin{tabular}[c]{@{}c@{}}$\sigma = 0.1$\end{tabular} & \begin{tabular}[c]{@{}c@{}}$\sigma = 0.2$\end{tabular} & \begin{tabular}[c]{@{}c@{}}$\sigma = 0.3$\end{tabular} \\ \hline
Mat\'ern  & 0.09 (0.02) & 0.13 (0.03) & 0.17 (0.03)  & 0.06 (0.01) & 0.08 (0.02) & 0.10 (0.02) \\
SE      & 0.10 (0.02) & 0.16 (0.02) & 0.20 (0.03) & 0.07 (0.01) & 0.10 (0.01) & 0.12 (0.02) \\
Sobolev  & 0.06 (0.01)& 0.08 (0.01)& 0.10 (0.02) & 0.03 (0.01) & 0.05 (0.01)& 0.07 (0.01)\\
CV & 0.06 (0.01) & 0.10 (0.03) & 0.12 (0.05) & 0.03 (0.01) & 0.05 (0.01) & 0.07 (0.01)\\ 
Inverse & 0.11 (0.12) & 0.11 (0.10) & 0.13 (0.09) & 0.15 (0.11) & 0.12 (0.12) & 0.11 (0.12)\\ 
Inverse* & 0.07 (0.02) & 0.08 (0.03) & 0.10 (0.03) & 0.11 (0.04) & 0.08 (0.03) & 0.07 (0.02)\\ 
B-splines & 0.08 (0.01) & 0.11 (0.02) & 0.14 (0.03) & 0.04 (0.01) & 0.06 (0.01) & 0.08 (0.02)\\ \hline
\end{tabular}
}
\end{table}

Table~\ref{table} reports the RMSEs of $f_0'$ calculated at 100 equally spaced points in $[0, 10]$ over 100 repetitions. We notice that in some repetitions the inverse method produces a zero function as the estimate, and we exclude these simulations and denote the results as ``Inverse*''; this is probably due to the complicated hyperparameter tuning without clear guides, which hampers easy implementation and optimal performance. Moreover, the inverse method is restricted to one particular derivative order, and its generalization to other derivative orders is nontrivial. From Table~\ref{table}, we can see that the plug-in Gaussian process prior with the Sobolev kernel achieves the best performance among all scenarios in general. The inverse method and B-splines behave relatively well for $n=100$ and $n=500$, respectively. In summary, our GP method tends to produce better estimation for larger $n$ and smaller $\sigma$, i.e., when the signal-to-noise ratio is large, and we have found no scenario in the considered settings where the inverse method has significantly smaller RMSE relative to standard errors.

We next compare a fully Bayesian approach with the empirical Bayes strategy for the proposed method. We use the priors $\sigma^2\sim \text{Inverse-Gamma}(20, 1)$ and $\lambda \sim \text{Gamma}(1, 1000)$ (parameterized using the shape and rate parameters with mean $1/1000$) and draw $S=10000$ posterior samples using the Metropolis-Hastings algorithm; the final estimate is the average of 10000 posterior samples $\hat f_{n,s}'$ for $s=1,\ldots ,S$. Similarly, Table~\ref{table:3} reports its RMSEs calculated at 100 equally spaced points in $[0, 10]$ over 100 repetitions, where we also include the results of empirical Bayes shown in Table~\ref{table} to ease comparison. According to Table~\ref{table:3}, we can see that there is no significant difference between the two treatments.

\begin{table}[ht]
\centering
\renewcommand\arraystretch{1.5}
\caption{RMSEs for estimating $f_0'$ over 100 repetitions for empirical Bayes (EB) and fully Bayesian (FB) approaches. Standard errors are provided in parentheses. \label{table:3}}
\vspace{0.2in}
\resizebox{\columnwidth}{!}{
\begin{tabular}{lcccccc}
\hline
& \multicolumn{3}{c}{$n = 100$}& \multicolumn{3}{c}{$n = 500$}\\ \cline{2-7} 
& \begin{tabular}[c]{@{}c@{}}$\sigma = 0.1$\end{tabular} & \begin{tabular}[c]{@{}c@{}}$\sigma = 0.2$\end{tabular} & \begin{tabular}[c]{@{}c@{}}$\sigma = 0.3$\end{tabular} & \begin{tabular}[c]{@{}c@{}}$\sigma = 0.1$\end{tabular} & \begin{tabular}[c]{@{}c@{}}$\sigma = 0.2$\end{tabular} & \begin{tabular}[c]{@{}c@{}}$\sigma = 0.3$\end{tabular} \\ \hline
Mat\'ern (EB)  & 0.09 (0.02) & 0.13 (0.03) & 0.17 (0.03)  & 0.06 (0.01) & 0.08 (0.02) & 0.10 (0.02) \\
Mat\'ern (FB)   & 0.08 (0.01) & 0.12 (0.02) & 0.15 (0.03) & 0.05 (0.01) & 0.08 (0.01) & 0.09 (0.02) \\
SE (EB)     & 0.10 (0.02) & 0.16 (0.02) & 0.20 (0.03) & 0.07 (0.01) & 0.10 (0.01) & 0.12 (0.02) \\
SE (FB)      & 0.10 (0.02) & 0.16 (0.02) & 0.19 (0.03) & 0.06 (0.00) & 0.10 (0.01) & 0.13 (0.02) \\
Sobolev (EB)  & 0.06 (0.01)& 0.08 (0.01)& 0.10 (0.02) & 0.03 (0.01)& 0.05 (0.01)& 0.07 (0.01)\\
Sobolev (FB)  & 0.07 (0.01) & 0.08 (0.01) & 0.10 (0.02) & 0.05 (0.01) & 0.06 (0.01) & 0.07 (0.01)\\ \hline 
\end{tabular}
}
\end{table}
\end{appendices}

\newpage

\normalem
\bibliographystyle{apalike}
\bibliography{reference}

\end{document}